\def\blx@maxline{77}
\setlist[enumerate,1]{label=(\arabic*), ref =(\arabic*)} \setlist[enumerate,2]{label=(\roman*), ref =(\roman*)}
\definecolor{blue75}{rgb}{0,0,.75}
\definecolor{green75}{rgb}{0,.75,0}
\crefname{equation}{}{}
\crefname{enumi}{}{}
\crefname{section}{{\it Section}}{{\it Sections}}
\crefname{subsection}{{\it Subsection}}{{\it Subsections}}
\crefname{subsubsection}{{\it Paragraph}}{{\it Paragraphs}}
\numberwithin{equation}{section}
\newtheorem{Theorem}{Theorem}[section]
\crefname{Theorem}{{\it Theorem}}{{\it Theorems}}
\newtheorem{Definition}[Theorem]{Definition}
\crefname{Definition}{{\it Definition}}{{\it Definitions}}
\newtheorem{Lemma}[Theorem]{Lemma}
\crefname{Lemma}{{\it Lemma}}{{\it Lemmas}}
\crefname{Proposition}{{\it Proposition}}{{\it Propositions}}
\newtheorem{Notation}[Theorem]{Notation}
\crefname{Notation}{{\it Notation}}{{\it Notations}}
\theoremstyle{definition}
\newtheorem{Remark}[Theorem]{Remark}
\crefname{Remark}{{\it Remark}}{{\it Remarks}}
\newtheorem{Example}[Theorem]{Example}
\crefname{Example}{{\it Example}}{{\it Examples}}
\newtheorem{Step}{Step}
\newcommand{\parenthezises}[1]{\arabic{#1}}
\newcommand{\cmg}[1]{}
\newcommand{\eps}{\varepsilon}
\newcommand{\ve}{\varepsilon}
\newcommand{\one}{\mathbf{1}}
\newcommand{\wu}{\widehat{u}}
\newcommand{\uem}{u_{\eps m}}
\newcommand{\wuem}{\widehat u_{\eps m}}
\newcommand{\ww}{\widehat{w}}
\newcommand{\Br}{B_{\rho}}
\newcommand{\Bro}{B_{\rho_0}}
\newcommand{\oB}{\overline{\Br}}
\newcommand{\oBo}{\overline{\Bro}}
\newcommand{\E}{\mathcal E}
\newcommand{\G}{{\mathcal G}} %
\newcommand{\HH}{{\mathcal H}} %
\newcommand{\LL}{{\mathcal L}} %
\newcommand{\K}{{\mathcal K}} %
\newcommand{\SM}{{\mathcal S}} %
\newcommand{\U}{{\mathcal U}} %
\newcommand{\V}{{\mathcal V}} %
\newcommand{\W}{{\mathcal W}} %
\newcommand{\X}{{\mathcal X}} %
\newcommand{\Y}{{\mathcal Y}} %
\def\diam{\operatorname{diam}}
\def\id{\operatorname{id}}
\def\sign{\operatorname{sign}}
\def\supp{\operatorname{supp}}
\newcommand{\mres}{\mathbin{\vrule height 1.6ex depth 0pt width
0.13ex\vrule height 0.13ex depth 0pt width 1.3ex}}
\newcommand{\R}{\mathbb{R}}
\newcommand{\N}{\mathbb{N}}
\newcommand{\Z}{\mathbb{Z}}
\newcommand{\Rpositive}{\R^+}
\newcommand{\Rnonnegative}{\R^+_0}
\let \Rpositive \OZI
\let \Rnonnegative \CZI
\newcommand{\EO}{(0,T)\times O}
\newcommand{\iRn}{\int_{\R^d}}
\newcommand{\Om}{{\Omega}}
\newcommand{\oOm}{\overline{\Omega}}
\newcommand{\Bm}{B_m}
\newcommand{\Ker}{\text{Ker}}
\newcommand{\MB}{{\mathcal M}(\oB)}
\newcommand{\MO}{{\mathcal M}(\oOm)}
\newcommand{\MPB}{{\mathcal M}^+(\oB)}
\newcommand{\MPO}{{\mathcal M}^+(\oOm)}
\newcommand{\MGO}{{\mathcal M}_G(\oOm)}
\newcommand{\MGPO}{{\mathcal M}_G^+(\oOm)}
\newcommand{\MKB}{{\mathcal M}_{KR}(\oB)}
\newcommand{\MKO}{{\mathcal M}_{KR}(\oOm)}
\newcommand{\Mn}{{\mathcal M}(\R^d)}
\newcommand{\MPn}{{\mathcal M}^+(\R^d)}
\newcommand{\LOnen}{L^{1}(\R^d)}
\newcommand{\LOneB}{L^{1}(\Br)}
\newcommand{\LOneo}{L^{1}(O)}
\newcommand{\LTwon}{L^{2}(\R^d)}
\newcommand{\LInfB}{L^{\infty}(B_{\rho})}
\newcommand{\LInfn}{L^{\infty}(\R^d)}
\newcommand{\LInfo}{L^{\infty}(O)}
\newcommand{\WInfn}{W^{1,\infty}(\R^d)}
\newcommand{\WInfo}{W^{1,\infty}(O)}
\newcommand{\WInfB}{W^{1,\infty}(\Br)}
\newcommand{\WInfBo}{W^{1,\infty}(B_{\rho_0})}
\newcommand{\HOnen}{H^{1}(\R^d)}
\newcommand{\HTwon}{H^{2}(\R^d)}
\newcommand{\HmOnen}{H^{-1}(\R^d)}
\renewcommand{\aa}{a}
\newcommand{\bb}{b}
\def\@fnsymbol#1{\ensuremath{\ifcase#1\or *\or **\else\@ctrerr\fi}}
\begin{document}

\title{{Local well-posedness for a novel nonlocal model for cell-cell adhesion via receptor  binding}}
\author{
Mabel Lizzy Rajendran\thanks{School of Mathematics, 
Watson Building, University of Birmingham, Edgbaston, Birmingham
B15 2TT, UK, \href{mailto:m.l.rajendran@bham.ac.uk}{m.l.rajendran@bham.ac.uk}} \ and Anna Zhigun\thanks{School of Mathematics and Physics, Queen's University Belfast, University Road, Belfast BT7 1NN, Northern Ireland, UK, \href{mailto:A.Zhigun@qub.ac.uk}{A.Zhigun@qub.ac.uk}} 
}
\date{}

\maketitle
\begin{abstract}
 Local well-posedness is established for a highly nonlocal nonlinear diffusion-adhesion system for bounded initial values with small support. Macroscopic systems of this kind were previously obtained by the authors through upscaling in \cite{ZRModelling} and can account for the effect of microscopic receptor binding dynamics in cell-cell adhesion. The system  analysed here couples an integro-PDE featuring degenerate diffusion of the  porous media type and nonlocal adhesion with a novel nonlinear integral equation. The approach is based on decoupling the system and using Banach's fixed point theorem to solve each of the two equations individually and subsequently the entire system. The main challenge of the implementation lies in selecting a suitable framework. One of the key results is the local well-posedness for the integral equation with a Radon measure as a parameter. The analysis of this equation utilizes the Kantorovich-Rubinstein norm, marking the first application of this norm in handling a nonlinear integral equation.\\\\
{\bf Keywords}: aggregation,  degenerate diffusion, integro-PDE, Kantorovich-Rubinstein norm, nonlinear integral equation, nonlocal cell-cell adhesion, Wasserstein metric 
\\
MSC 2020: 
35K65
35Q92 
45G10
45K05 
47G10
92C17 
\end{abstract}

\section{Introduction}
In this paper we study the coupling of the Cauchy problem for the integro-PDE 
\begin{subequations}\label{M_PLnewIBVP}
\begin{alignat}{3}
&\partial_tu=\nabla\cdot\left(2u\nabla u- v\chi(u)(\nabla H\star u)\right)&&\quad\text{in }(0,T]\times\R^d,\label{M_PLnew}\\
&u(0,\cdot)=u_0&&\quad\text{in }\R^d,\label{M_u0}
\end{alignat}
\end{subequations}
with the integral equation 
\begin{align}
&v=\frac{\G^{+}(u-v)}{\G^{+}(u-v)+\G^{-}v}u
\qquad\text{in }(0,T]\times\R^d,
\label{M_LambdaEq}
\end{align}
where  
\begin{align}
 (u,v):[0,T]\times\R^d\to\{(z_1,z_2)\in\R^2:\quad 0\leq z_2\leq z_1\}\nonumber
\end{align}
 is a pair of unknowns,  $\star$ denotes convolution in the spatial variable $x\in\R^d$, $d\in\N$, $T>0$, and 
\begin{subequations}\label{ParFct}
\begin{align}
&\chi\in C^1_b(\Rnonnegative), \qquad \chi(0)=0,\label{Asschi}\\
 &H(x):=-
\int_0^{|x|}\one_{[0,1]}F(s)\,ds,\label{H}\\
&F\in C^1(\Rnonnegative),\label{AssF}\\
 &\G^{\pm}f(t,x):=
\int_{\R^d}G^{\pm}(t,x,y) f(y)\,dy,\label{DefG}\\
&G^{\pm}(t,x,y):=\varphi^\pm(|x-y|) K^{\pm}\left(t,\frac{x+y}{2}\right),\label{DefG2}\\
 &\varphi^{\pm}(s):=\begin{cases}\left(1-s^{b^\pm}\right)^{\pm a^\pm}&\text{for }s\in[0,1),\\
 0&\text{for }s\in[1,\infty),\end{cases}
 \label{AssPhi}\\
 &K^{\pm}\in C^2([0,T]\times \R^d;(0,\infty)),\label{Kpm}\\
&a^{\pm}>0,\  b^{\pm}\geq 2.\label{aabb} 
\end{align}
\end{subequations}
System \cref{M_PLnewIBVP}-\cref{M_LambdaEq} is a modification of a novel system 
that the authors recently  derived using a multiscale approach, see (4.35) in \cite{ZRModelling}. Equation \cref{M_PLnew} models the  evolution of density $u=u(t,x)$, $t$ being time and $x$ position in space, of a cell population dispersing through diffusion and nonlocal cell-cell adhesion. {The advection direction due to adhesion is given by the nonlocal term $\nabla H\star u$, referred to as 'adhesion velocity' \cite{Armstrong2006}. The 'adhesion potential' \cite{ZRModelling}, $H$, is determined by a function $F$ that models the distance-dependent component of the local adhesion force that acts at a distance not exceeding the 'sensing radius' \cite{Armstrong2006}, here scaled to one. The additional dependent variable $v$ describes the amount of formed bonds and, together with the 'sensitivity coefficient' $\chi$ that can account for local heterogeneity of response to adhesion, modulates the} 'adhesion sensitivity'. More precisely, 
\begin{align}
 w:=\frac{v}{u}\in[0,1]
 \nonumber%
\end{align}
is the average fraction of bounded receptors. Variables $u$ and $v$ are further connected through a nonlinear nonlocal equation of a new type, \cref{M_LambdaEq} that  describes the equilibration of the receptor binding dynamics{. Here, $G^+$ and $G^-$ are the binding and unbinding rates, respectively. These rates have a distance-dependent component, $\varphi^{\pm}$, but may also depend on the concentration of an external factor, described by $K^{\pm}$. We refer to \cite{ZRModelling} for further details regarding this effect and its microscopic modelling that led to the  macroscopic equation \cref{M_LambdaEq}.}

Unlike the original model in \cite{ZRModelling}, 
system \cref{M_PLnewIBVP}-\cref{M_LambdaEq} includes the 
porous media-type (PM) diffusion
\begin{align}
 \Delta(u^2)\label{PM}
\end{align}
rather than the linear myopic diffusion
\begin{align}
 \nabla\nabla^T:({\mathbb D}u), \qquad {\mathbb D}={\mathbb D}(x)\in \R^{d\times d}.\label{myopic}
\end{align}
This modification is mainly motivated by the  analytical challenges that  are discussed in \cref{SecCh} below. Still, from the modelling perspective, such a term is justifiable and, in fact, often arises in derivations of models for cell motion due to a combination of short- and long-range interactions between population members. For example, in \cite{MorCapOel2005} a nonlocal equation was derived that corresponds to \cref{M_PLnew} for constant $\chi$ and $w$. Yet another distinction between \cref{M_PLnew} and the original equation is that the adhesion sensitivity coefficient $\chi$ is no longer an arbitrary function of $t$ and $x$. Instead, it is assumed to be a function of $u$.
This choice enables the incorporation of the influence of local density on adhesion sensitivity. {The assumption $\chi(0)=0$ is of a technical nature and is needed for our analysis. It is not unrealistic from the modelling perspective, ensuring a proper balance between a degenerate  diffusion and advection at low densities, see, e.g. similar examples in models with local taxis in \cite{BBTW}. }

\bigskip 
Since  $0\leq v\leq u$, it is  convenient to work with the pair 
\begin{align}
 (u,w):[0,T]\times\R^d\to[0,\infty)\times[0,1]\nonumber
\end{align}
rather than $(u,v)$, thus keeping the codomains of the unknowns independent. 
Restated for these variables, equations   \cref{M_PLnewIBVP}-\cref{M_LambdaEq} take the form
\begin{subequations}\label{PLnewIBVP}
\begin{alignat}{3}
&\partial_tu=\nabla\cdot\left(2u\nabla u- wu\chi(u)(\nabla H\star u)\right)&&\quad\text{in }(0,T]\times\R^d,\label{PLnew}\\
&u(0,\cdot)=u_0&&\quad\text{in }\R^d,\label{u0}
\end{alignat}
\end{subequations}
and
\begin{align}
&w=\Y(u,w)\qquad\text{in }\{u>0\},
\label{LambdaEq}
\end{align}
where
\begin{align}
 &\Y(u,w):=\frac{\G^{+}((1-w)u)}{\G^{+}((1-w)u)+\G^{-}(wu)}.\label{Y}
\end{align}
We mostly deal with this reformulation of the original system. In this work we make the first step in its study. Here we prove a local well-posedness result for the special case where $u$ remains compactly supported in a sufficiently small ball. In particular, the support diameter needs to be smaller than $1$, the radius of the sampling domain in the integral operators 
$\G^\pm$. 
The smallness of the support of the cell density can be interpreted as an initial stage of population dispersal, e.g. when a small tumour just starts invading the surrounding tissue. 

The result is local in the following sense. The solution exists  for finite times provided $u_0$ is close to a $\mu_0$ such that for some $w_0$ the pair $(\mu_0,w_0)$ solves \cref{LambdaEq} and satisfies a non-degeneracy condition. It is unique as long as it stays close to this pair.

The rest of the paper is organised as follows. In \cref{SecCh}, we present the main challenges encountered in the analysis of system \cref{PLnewIBVP}-\cref{LambdaEq} and explain how we handle them. After collecting some notation and useful facts in  \cref{SubsecRM}, we fully set up our problem and state the main results in \cref{prset}. The analysis of the individual equations \cref{LambdaEq} and \cref{PLnewIBVP} is presented in \cref{Sec:wEqn,ExPLnew}, respectively. In the intermediate \cref{Sec:FixedAdv} we study an auxiliary parabolic PDE including PM diffusion and a fixed advection direction. The  results for this equation feed into the analysis in \cref{ExPLnew}. The final \cref{SecMainProof} is devoted to the proof of a well-posedness result for the whole system \cref{PLnewIBVP}-\cref{LambdaEq}.

\section{Analytical challenges of \texorpdfstring{\cref{PLnewIBVP}-\cref{LambdaEq}}{} and their handling}\label{SecCh}
System \cref{PLnewIBVP}-\cref{LambdaEq} combines two equations of a very different nature. In this situation, a standard approach, which we also adopt in this paper, is first to  decouple the system and establish solvability of the two equations separately, assuming that $w$ and $u$ are fixed in \cref{PLnewIBVP,LambdaEq}, respectively,  and then to obtain a joint solution by means of some fixed-point argument. The non-standard nonlinear integral equation \cref{LambdaEq} puts multiple obstacles in the way.
\begin{enumerate}
[label=(IE\arabic*),ref=(IE\arabic*)]
 \item\label{ChI1} As with every new equation, it would be desirable to have some explicit solutions of \cref{LambdaEq} at hand, at least for some simple time-independent $u$. Not only would it help to understand what to expect from its solutions in general, but it could also allow to prove existence of other solutions that are close to already known ones. This is even more important in the case of system \cref{PLnewIBVP}-\cref{LambdaEq} where $u$ solves an initial value problem, so that one needs a $w_0$ corresponding to a given initial value $u_0$, to start with.  
 Yet already for constant functions $u$  it is not evident how, if at all, a matching $w$ could be determined. 
 If, however, we allow $u$ to be a singular measure, accordingly reinterpreting the involved integral operators $\G^{\pm}$ as integrals of kernels $G^{\pm}$ against such a measure (see \cref{SecY}), then the equation can be resolved, e.g. in the special case of $u$ a point mass (see \cref{Exdelta}).  
 \item\label{ChI2} Based on the observations made in the previous item, one is led to try to establish solvability of \cref{LambdaEq} for such $u$ that are close to a point mass but are functions rather than singular measures. This would be desirable for its own sake but also because equation  \cref{PLnew} includes nonlinear functions of $u$, hence does not allow for measure-valued solutions. As is well-known, singular measures cannot be approximated by Lebesgue integrable functions if the distance between them is measured by the metric induced by the variation norm, with which the whole space of signed Radon measures is equipped. To circumvent this,  weaker norms restricted to suitable subspaces are used instead. A popular choice is the Kantorovich-Rubinstein (KR) norm \cite{KantorovichAkilov} (see \cref{SecM}) that we also utilise in this work. 
The KR metric generated by the KR norm is a special case of the Wasserstein metric. This metric has proven to be a valuable tool for measuring distances between probability measures in various contexts, including optimal transportation problems \cite{Villani, AmbrosioGigliSavare}, analysis of evolution PDEs with a gradient flow structure \cite{AmbrosioGigliSavare}, upscaling of mean field equations \cite{Golse}, and more recently in statistical machine learning \cite{Kolourietal}. To the best of our knowledge, the KR metric has so far not been used in analysis of nonlinear integral equations. As it turns out, it offers a convenient framework for such equations as \cref{LambdaEq}. In particular, balls in this metric centred at a point mass contain  bounded compactly supported functions of any shape (see \cref{ExSolvPointM}). This provides an ample source of suitable initial values $u_0$.
\item\label{ChI3} The trade-off associated with using the KR norm in the context of the  linear integral operators such as $\G^\pm$ 
is that it requires their kernels to be sufficiently smooth in order to be able to derive the bounds we need. In the case of $G^-$ we have to avoid integration in the neighbourhoods of its  singularities. For each $x$, the corresponding singularity set is a circle of radius $1$ centred at $x$. To maintain a positive distance from these sets, we restrict our study of \cref{LambdaEq} to measures $u$ with support diameter smaller than $1$.
\item\label{ChI4} The nonlinear operator $\Y$ on the right-hand side of \cref{LambdaEq} is of the form
\begin{align}
 \Y=\frac{a}{a+b},\label{Yab}
\end{align}
where the applications of the integral operators $\G^+$ and $\G^-$ that are inserted into $a$ and $b$, respectively, are both zero at points that are {at} a distance farther than $1$ from the support of $u$. The function on the right-hand side of \cref{Yab} cannot be extended to $(0,0)$ in a continuous fashion. Hence, the denominator of $\Y$ is a  potential source of singularities of this operator.  Since we aim at solving \cref{LambdaEq} for $u(t,\cdot)$, $t\in[0,T]$ for some $T>0$, we strengthen the condition from the previous item, requiring the supports of all members of this family 
to be  confined to a single ball of diameter less than $1$. Then, for each $t$, \cref{LambdaEq} only needs to be solved in this ball, and there $\Y$ has good properties (see \cref{SecYball}). 
\item\label{ChI5}  When solving an equation with a parameter, some version of the implicit mapping theorem is often the method of choice. For instance, the Banach space adaptation,  \cite[Chapter 4, \S4.7, Theorem 4.B]{ZeidlerFP},  provides solvability in a small ball in the parameter space centred at a parameter value for which the existence of a solution is known.  Having chosen a setting for \cref{LambdaEq} based on the discussion in \cref{ChI1,ChI2,ChI3,ChI4}, we cannot apply that theorem directly because the set of admissible parameters does not contain a  ball. Still, since it is convex%
, it is possible to construct a similar argument and thereby derive some useful estimates that ensure a locally stable dependence of $w$ upon $u$ (see \cref{Thm:wEqn-FP} and its proof given in \cref{SecExEq2}). 
\end{enumerate}
Having decided on a suitable framework for \cref{LambdaEq}, we turn to  \cref{PLnewIBVP}.
\begin{enumerate}
[label=(PDE\arabic*),ref=(PDE\arabic*)]
\item\label{PDE1}  As already mentioned in the Introduction, the original derivation in  \cite{ZRModelling} yielded a diffusion-advection equation with the linear myopic diffusion \cref{myopic}. 
 The few available works on analysis of such equations specifically focused on local taxis \cite{WinSur2017,Heihoff,EckSur}. This type of advection occurs along the gradient of an external stimulus and  not as a result of    nonlocal self-adhesion.

Assume that ${\mathbb D}$ is a smooth positive semi-definite matrix. 
If it is also uniformly positive definite, then  \cref{myopic} does not pose considerable additional analytical difficulties compared to the basic constant diffusion case  covered in \cite{HillenPainterWinkler} for diffusion-adhesion models.
However, a nondegenerate diffusion, whether myopic or not, immediately destroys  compactness of support, hence is unsuitable for the present setting, see \cref{ChI4}.
To evade this unwanted  effect, diffusion has to be  degenerate.
In this work, we selected the PM diffusion  \cref{PM}. This choice is primarily motivated by the need to guarantee that for small times the support of $u$ is confined to a small ball, as advocated in \cref{ChI4}. At the same time, it allows us to                                                                                                                                                                                                                                                                                                                                                                                                                                                                                                                                                                                         benefit from a range of analytical tools already available for this 
prototypical degenerate diffusion  \cite{Vazquez},  which is much better understood than a myopic one.  

On the whole, perhaps  somewhat paradoxically, degeneracy  {\it enables} the analysis of \cref{PLnewIBVP}-\cref{LambdaEq}.    
\item\label{ChPM} Analysis of equations that combine   quasilinear degenerate diffusion
 such as \cref{PM} 
and nonlocal advection with velocity
\begin{align}
  \nabla H\star u=\nabla(H\star u),\nonumber%
 \end{align}
 corresponding to \begin{align*}
\chi\equiv&1,\qquad\\  w\equiv&1,                                                 \end{align*}
has attracted much attention. For smooth kernels $H$, well-posedness, which is what is  relevant for us here,  initially in entropy and then  also in weak senses was established in  
 \cite{BurgerCapMor} and \cite{BertozziSlepcev}, respectively. Subsequent works have primarily focused on kernels singular only at the origin where they tend to infinity, see \cite{CarrCrYao} and references therein. 
 The situation is different for the self-adhesion kernel \cref{H}. Its gradient is bounded and supported in the unit ball centred at the origin, yet generally has   discontinuities in the centre and on the boundary of the ball. As a result, $\Delta H$ is, in most cases, a singular Radon measure with singular parts supported on the ball's boundary and, in dimension one, also in the origin (see \cref{LemH}). Still, because this measure is finite,   the standardly  required  estimates in $L^p$ norms for its convolution with $u$ can be obtained (see in the proof of \cref{LemV1}).
 
 Two recent studies \cite{DaneriRadiciRuna,FagioliRadici} incorporated  a non-constant coefficient function $\chi$. In a one-dimensional setting, weak solutions were obtained there as limits of first-order deterministic many-particle systems. In these works, $\chi$ was assumed to be nonnegative and decreasing, as well as satisfy certain other conditions.
 As to the kernel, it was not mandated to be compactly supported but   allowed to have a Lipschitz singularity solely at the origin  \cite{DaneriRadiciRuna}. 
 
 Unlike \cite{DaneriRadiciRuna,FagioliRadici}, we impose  \cref{Asschi}, thus ensuring  that $\chi$ is subordinate to the diffusion coefficient at $u=0$. 
 This assumption is sufficient for uniqueness of weak solutions $u$ to \cref{PLnewIBVP} for fixed $w$ and is convenient for several other steps in our analysis that includes well-posedness as well as other properties such as support control and stable dependence on $w$ (see \cref{SubSecuEqWP}).
\end{enumerate}
We emphasise that our objective here is not to have the least restrictive assumptions on the parameters in \cref{PLnew}. We have therefore chosen conditions that ensure solvability of the entire system \cref{PLnewIBVP}-\cref{LambdaEq}, while at the same time being reasonable from the modelling perspective.

\section{Preliminaries}\label {SubsecRM}
\subsection{Miscellaneous notation}

For convenience, in the few cases of a numerical fraction that has a finite  numerator $a$ and a potentially zero denominator the interpretation is  
\begin{align*}
 \frac{a}{0}:=\infty.
\end{align*}

\medskip

The spacial domains that we mostly consider are  either the whole space $\R^d$, $d\in\N$, or the ball $B_{\rho}$ in $\R^d$ that is  centred at the origin and has  radius $\rho>0$, its boundary being the sphere  $S_{\rho}$. More generally,  $B_{\rho}(x)$ stands for the $\rho$-ball centred at $x$. The $(d-1)$-dimensional unit ball centred at the origin is denoted by $B_1^{d-1}$.

For $T>0$, we define the time-space cylinder  $E_T:=(0,T)\times\R^d$. 

\medskip

Depending on the context, $|\cdot|$ can stand for: the absolute value of a number, the Euclidean norm of a vector in $\R^d$, the $d$-dimensional Lebesgue measure of a subset in $\R^d$, and the total variation of a Radon measure. 

\medskip

For any set $A$, we denote by $\one_A$ its characteristic function. Another standard function we use is
\begin{align}
 \sign(x):=\begin{cases}
            \frac{x}{|x|}&\text{for }x\in\R^k\backslash\{0\},\\
            0&\text{for }x=0,
           \end{cases}
\qquad k\in\{1,d\}.\nonumber
\end{align}

\medskip

For a real variable $z$, e.g. such as $t$ or $x_i$, we write $\partial_z$ for the usual partial derivative with respect to $z$. Further, $\nabla$, $\nabla\cdot$, and $\Delta$ stand for the spatial gradient, divergence, and Laplace operators, respectively. 

When referring to the Gateaux partial differentials in an infinite-dimensional setting, we always mean the right-side Gateaux partial semi-differentials. For example, an operator $l=l(\mu,w)$ in a Banach space possesses the Gateaux partial differential at $(\mu_0,w_0)$ with respect to $w$ in direction $h$ if the following limit exists:              
\begin{align*}
 &\partial_{w}l(\mu_0,w_0)h:=\underset{s\to0+}{\lim}\frac{1}{s}\left(l(\mu_0,w_0+sh)-l(\mu_0,w_0)\right).
\end{align*}
The same notation is used for the Fr\'echet partial derivatives. Which of the two notations is used depends on the context and is explicitly mentioned.
\medskip

For the (continuous) dual of a normed space $X$ we use the notation $X^*$. Symbols $\rightharpoonup$ and $\overset{*}{\rightharpoonup}$ represent the weak and weak-$*$ convergence, respectively. 
By $<\cdot,\cdot>$ we denote the duality paring. 

For $X$, $Y$, and $Z$ normed spaces, we write $L(X;Z)$ for the space of bounded linear maps from $X$ into $Z$ and $B(X\times Y;Z)$ for the space of bounded bilinear maps from $X\times Y\to Z$.

If $X,Y\subset Z$, we set 
\begin{align*}
 \|\cdot\|_{X\cap Y}:=\max\{\|\cdot\|_{X},\|\cdot\|_{Y}\}.
\end{align*}

\medskip

Let $k\in \N_0\cup \{\infty\}$ and $E\subset \R^N$ for some $N\in\N$. As usual, $C^k(E)$ ($C(E):=C^0(E)$)  is the notation for the space of $k$ times continuously differentiable real-valued functions in $E$.
The subspaces of bounded and compactly supported functions in this class are denoted by $C_b^k(E)$ and $C_0^k(E)$, respectively. When dealing with vector- or Banach-valued function, but, also, when we want to specify the codomain that is a particular subset of $\R$, we use, e.g. the notation $C^k(E;F)$ for a $C^k$-mapping from $E$ into $F$. For $F$ a Banach space,  $C_w(E;F)$ denotes the space of weakly continuous maps between these sets.

We use standard Lebesgue spaces $L^p(\Omega)$, Sobolev spaces $W^{s,p}(\Omega)$ and  $H^s(\Omega):=W^{s,2}(\Omega)$ for various $p\in[1,\infty]$ and $s\in\Z$, as well as Bochner spaces of functions taking values in such spaces.
In particular, we denote 
\begin{align*}
 L^{p,q}(E_T):=L^{p}(0,T;L^q(\R^d)),\qquad L^{p}(E_T):=L^{p,p}(E_T)\qquad\text{for }p,q\in[1,\infty].
\end{align*}

We assume the reader to be familiar with the standard properties of the above mentioned functional spaces. 
We also need to deal with subsets of the space of finite measures. The related notation and required facts are provided in \cref{SecM} below.

\medskip

Throughout the paper, $C_i$, $T_i$, and $\rho_i$ are either positive constants or positive-valued mappings, whereas  $U_i$ and $W_i$ are sets or families of sets. In many instances we do not track the dependence of these entities on some or all of parameters $d$, $\chi$, $F$, $a^\pm$, $b^\pm$, $K^\pm$, $u_0$, as well as $w_0$, an initial state of $w$. When a change occurs, e.g. because we stop tracking certain dependences, a remark on notation is provided at the beginning of the corresponding section/subsection.
\subsection{Radon measures and the Kantorovich-Rubinstein norm}\label{SecM}
Let $\Omega\subset\R^d$  be a bounded domain.  
 We denote by $\MO$ ($\MPO$) the space of finite signed (positive) Radon measures in $\oOm$ and by $|\mu|$ the total variation of $\mu\in\MO$. As is well-known, $(\MO,|\cdot|)$ is a non-separable Banach space. In particular, singular measures, such as point masses, cannot be obtained as limits of Lebesgue integrable functions in this norm. This issue is relevant in our case, see the discussion in \cref{SecCh}\cref{ChI2}. To counteract it, we also use the Kantorovich-Rubinstein (KR)  norm  associated with the Euclidean metric on $\R^d$. Since $\oOm$ is compact, this norm is well-defined on the subspace  
 $${\mathcal M}_{KR}(\oOm):=\{\mu\in\MO:\ \mu(\oOm) = 0\}.$$ By the  KR duality theorem  %
\cite[Chapter VIII, \S 4.5, Theorem 1]{KantorovichAkilov}, this norm is given by
\begin{align}
 \|\mu\|_{\MKO}:=\sup\left\{\int_{\overline{\Omega}}\varphi\,d\mu(y):\quad \varphi \in W^{1,\infty}(\Omega)\text{ and } \|\nabla \varphi\|_{L^{\infty}(\Omega)}\leq1\right\}\quad\text{for all } \mu\in \MKO.\label{dualKR}
\end{align}
It is strictly weaker than the total variation and satisfies
\begin{align}
 \|\mu\|_{\MKO}\leq \frac{\diam(\Omega)}{2}|\mu|\qquad\text{for all }\mu\in \MKO.\label{NormComp}
\end{align}
This norm gives rise to the  Wasserstein $1$-distance on the set of probability measures \cite{Villani} or, more generally, on the set of positive measures that have the same total variation. Moreover, when restricted to an arbitrary closed ball in the variational norm, the convergence with respect to the Wasserstein metric is equivalent to the weak-$*$ convergence \cite[Chapter VIII, \S 4.6, Theorem 3]{KantorovichAkilov}. As a result, this metric turns such a ball into a metric space which is compact and, hence, separable. 
We refer, e.g. to \cite[Chapter VIII, \S 4]{KantorovichAkilov} for further details.

\medskip
For $w:\oOm\to\R$ Borel and bounded and $\mu\in \MPO$ we use the standard  definition for their product:
\begin{align*}
 w\mu(B):=\int_{B}w\,d\mu\quad\text{for any Borel set }B\subset \oOm.
\end{align*}
It is well-known that  $w\mu\in\MO$, and for any bounded Borel $f:\oOm\to\R$ 
it holds that
\begin{align}
 \int_{\oOm}f\,d(w\mu)=\int_{\oOm}fw\,d\mu.\nonumber%
\end{align}
\medskip

The notation $\mu\mres A$ means the restriction of a measure $\mu$ to a set $A$.

\medskip

By $\LL^d$ and ${\cal H}^{d-1}$ we denote the $d$-dimensional Lebesgue and the $(d-1)$-dimensional Hausdorff measures, respectively. We identify $u$ and $u\LL^d$ for $u\in L^1$. 

Unless stated otherwise, a.e. means 'almost everywhere' with respect to the Lebesgue measure with dimension that is standardly adopted for a set in question.
\section{Problem setting and main results}\label{prset}

We define solutions to individual equations  \cref{M_PLnewIBVP,PLnewIBVP,M_LambdaEq,LambdaEq} and the corresponding systems as follows.
\begin{Definition}[Solutions to \cref{M_PLnewIBVP,PLnewIBVP}]\label[Definition]{DefIPDE}
Let $T>0$,  \cref{H,Asschi,AssF}, and  
\begin{align}
 0\leq u_0\in \LInfn\cap\LOnen\label{Assumpu01Om}
\end{align}
be satisfied. 
\begin{enumerate}
 \item We call a pair of functions $(u,v):[0,T]\times\R^d\to[0,\infty)\times[0,\infty)$ a  solution to \cref{M_PLnewIBVP} in $[0,T]\times\R^d$ if:
\begin{enumerate}
\item $u\in C_w\left([0,T];\LInfn\cap\LOnen\right)\cap L^{\infty}(0,T;\LInfn)$;
\item%
  $u^{\frac{3}{2}}\in L^2\left(0,T;\HOnen\right)$;
 \item%
  $\partial_tu\in L^2\left(0,T;\HmOnen\right)$, $\partial_tu \in L^{\infty}\left(0,T;\left(W^{2,p}\left(\R^d\right)\right)'\right)$ for all $p\in[1,\infty]$;
  \item $v\in L^{\infty}(0,T;\LInfn)$;
 \item $(u,v)$ satisfies \cref{M_PLnewIBVP} in  a weak, 
 \begin{align}
  \left<\partial_tu,\varphi\right>=-\int_{\R^d}\left(\nabla u^2-v\chi(u)(\nabla H\star u)\right)\cdot\nabla\varphi\,dx\qquad\text{a.e. in }\left(0,T\right)\text{ for all }\varphi\in\HOnen,
  \nonumber%
 \end{align}
 and a very weak,  
 \begin{align}
  \left<\partial_tu,\varphi\right>=\int_{\R^d} u^2\Delta \varphi+v\chi(u)(\nabla H\star u)\cdot\nabla\varphi\,dx\quad\text{a.e. in }\left(0,T\right)\text{ for all }\varphi\in \bigcup\limits_{p=1}^{\infty} W^{2,p}(\R^d),\nonumber%
 \end{align}
 senses%
;
\item%
$u\left(0,\cdot\right)=u_0$ in $\LInfn\cap\LOnen$.
\end{enumerate}
\item We call a pair of functions $(u,w):[0,T]\times\R^d\to[0,\infty)\times[0,\infty)$ a  solution to \cref{PLnewIBVP} in $[0,T]\times\R^d$ if $(u,wu)$ is a solution to \cref{M_PLnewIBVP} in $[0,T]\times\R^d$ in the above sense.
\end{enumerate}

\end{Definition}

\begin{Definition}[Time-independent solutions to \cref{M_LambdaEq,LambdaEq}]\label[Definition]{DefEqY}
Let assumptions  \cref{DefG2,AssPhi,Kpm,aabb} be satisfied. 
\begin{enumerate}
 \item We call a pair of measures $(\mu,\nu)$ a solution to \cref{M_LambdaEq} if:
 \begin{enumerate}
 \item%
 $\mu\in\MPn$;
  \item%
  $\G^-\mu<\infty$;
  \item $\nu\in \MPn$;
  \item $\nu\leq \mu$ in $\R^d$; 
  \item $(\mu,\nu)$ satisfies \cref{M_LambdaEq} in the following sense:
  \begin{align}
&\nu=\frac{\G^{+}(\mu-\nu)}{\G^{+}(\mu-\nu)+\G^{-}\nu}\mu\qquad\text{in }\MPn,
\label{M_LambdaEq_m}
\end{align}
where $0/0$ in the fraction is interpreted as $1$;
 \end{enumerate}
\item We call a pair of a measure and a function $(\mu,w)$ a solution to \cref{LambdaEq} if:
\begin{enumerate}
\item $w:\R^d\to[0,1]$ is Borel;
\item $(\mu,w\mu)$ is a solution to \cref{M_LambdaEq} in the above sense.\end{enumerate}

\end{enumerate}
\end{Definition}
\begin{Remark}%
 \begin{enumerate}
\item 
In consequence of  \cref{LemYwd} below, the fraction on the right-hand side of \cref{M_LambdaEq_m} is well-defined and Borel. Therefore, \cref{DefEqY} makes sense. 
 \item If $(\mu,w)$ has the regularity required by \cref{DefEqY} and $\nu=w\mu$, then \cref{M_LambdaEq_m} is obviously  equivalent to 
 \begin{align}
  w=\Y(\mu,w)\qquad\mu-\text{a.e.}\nonumber
 \end{align}   
 Thus, through \cref{M_LambdaEq_m} we have a rigorous interpretation of \cref{LambdaEq}.
 \end{enumerate}

\end{Remark}

\begin{Definition}[Solutions to \cref{M_PLnewIBVP}-\cref{M_LambdaEq}  and \cref{PLnewIBVP}-\cref{LambdaEq}]\label[Definition]{DefSol}
Let $T>0$ and   \cref{H,Asschi,AssF}, \cref{DefG2,AssPhi,Kpm,aabb}, and \cref{Assumpu01Om} be satisfied. We call a pair of functions $(u,v)$ ($(u,w)$) a solution to system \cref{M_PLnewIBVP}-\cref{M_LambdaEq}  (\cref{PLnewIBVP}-\cref{LambdaEq}) in $[0,T]\times\R^d$ if:
\begin{enumerate}
\item $(u,v)$ ($(u,w)$) solves \cref{M_PLnewIBVP} (\cref{PLnewIBVP}) in $[0,T]\times\R^d$ in the sense of \cref{DefIPDE};
\item for all $t\in[0,T]$, $(u(t,\cdot),v(t,\cdot))$ ($(u(t,\cdot),w(t,\cdot))$) solves \cref{M_PLnewIBVP} (\cref{PLnewIBVP}) in the sense of \cref{DefEqY}.
\end{enumerate}

\end{Definition}

The main results of this paper are the following Theorems on local well-posedness of equation \cref{LambdaEq} and system \cref{PLnewIBVP}-\cref{LambdaEq}.
\begin{Theorem}[Local well-posedness of \cref{LambdaEq}]\label{Thm:wEqn-FP}
Let  $$\rho\in\left(0,\frac{1}{2}\right)$$ and $(\mu_0,w_0)$ satisfy 
\begin{subequations}\label{Assmu0w0}
 \begin{align}
  &\mu_0\in (\MPB)\backslash\{0\},\\
  &w_0\in W^{1,\infty}(B_{\rho};(0,1)),\\
  &w_0=\Y(\mu_0,w_0)\qquad  \text{in }\oB,\\
  &\X:=id-\partial_w\Y(\mu_0,w_0)\text{ is invertible in }L(\WInfB),
 \end{align}
\end{subequations}
where the partial derivative $\partial_w$ is taken in the Fr\'echet sense. 
Define sets
\begin{subequations}\label{setsWM}
\begin{alignat}{3}
\Cl[U]{SU2}:=&\Big\{\mu\in \MPB:&&\quad |\mu|=|\mu_0|\ \text{ and }\ \|\mu-\mu_0\|_{\MKB}\leq \Cl{R2}\Big\},\\
\Cl[W]{W3}:=&\Big\{w\in W^{1,\infty}(B_{\rho};[0,1]):&&\quad \|w-w_0\|_{\WInfB}\leq \Cl{R1}\Big\}.\label{SetW} 
\end{alignat}
\end{subequations}
Then, there exist constants $\Cr{R2},\Cr{R1},\Cl{F-contin-mu}>0$ that depend only on the parameters from \cref{DefG2,AssPhi,Kpm,aabb}  
as well as 
\begin{align*}
|\mu_0|,\ \rho,\
\|\nabla w_0\|_{L^{\infty}\left(B_{\rho}\right)},\  \min_{\overline{B_{\rho}}}w_0,\  \max_{\overline{B_{\rho}}}w_0, \
 \left\|\X^{-1}\right\|_{{L\left(W^{1,\infty}\left(B_{\rho}\right)\right)}},                                                                                                                                                                                                                  \end{align*}
such that for all $\mu\in \Cr{SU2}$ there exists a unique $w\in \Cr{W3}$ for which 
\begin{align}
 &w=\Y(\mu,w)\qquad  \text{in }\oB,\label{LambdaEqmu}
\end{align}
 and the solution map 
\begin{align}
    \Cl[WW]{OpW1}: \Cr{SU2}\to \Cr{W3}\cap C^2(\oB;(0,1)),\qquad \mu\mapsto w,  \label{solmapW}                  
\end{align}
 is well-defined and Lipschitz continuous in the following sense:
\begin{align}
\|\Cr{OpW1}(\mu_1)-\Cr{OpW1}(\mu_2)\|_{\WInfB} \leq \Cr{F-contin-mu} \|\mu_1-\mu_2\|_{\MKB}\qquad\text{for all }\mu_1,\mu_2\in \Cr{SU2}.\label{W-stability}
\end{align}

\end{Theorem}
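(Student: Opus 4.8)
The plan is to set up a fixed-point argument for the map $w\mapsto\Y(\mu,w)$ in the Banach space $\WInfB$, treating $\mu\in\Cr{SU2}$ as a parameter, and to track the dependence on $\mu$ carefully enough to extract the Lipschitz estimate \cref{W-stability}. Since the admissible parameter set $\Cr{SU2}$ is not a ball (it sits inside the affine subspace $|\mu|=|\mu_0|$ of the non-reflexive space $\MB$), the off-the-shelf Banach-space implicit function theorem \cite{ZeidlerFP} does not apply verbatim, as flagged in \cref{ChI5}; instead I would mimic its proof, exploiting that $\Cr{SU2}$ is convex. The starting point is the nondegeneracy hypothesis: $\X=\id-\partial_w\Y(\mu_0,w_0)$ is invertible in $L(\WInfB)$, so for $w$ near $w_0$ and $\mu$ near $\mu_0$ the map
\begin{align*}
 \Phi(\mu,w):=w-\X^{-1}\bigl(w-\Y(\mu,w)\bigr)
\end{align*}
has $\partial_w\Phi(\mu_0,w_0)=0$, hence is a contraction in $w$ on a small ball, uniformly in $\mu$ close to $\mu_0$; and $w=\Y(\mu,w)$ iff $w=\Phi(\mu,w)$.

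\emph{Step 1 (good properties of $\Y$ on the ball).} First I would invoke the analysis of $\Y$ restricted to $\oB$ — the content advertised in \cref{SecYball} behind \cref{ChI4} — to know that for $\mu\in\MPB\setminus\{0\}$ with $\supp\mu\subset\oB$ and $\diam(\oB)=2\rho<1$, the denominator $\G^+((1-w)\mu)+\G^-(w\mu)$ is bounded below by a positive constant on $\oB$ (here one uses $w\leq 1$ together with positivity and smoothness of the kernels $K^\pm$ on the compact set, and $b^\pm\geq 2$, $a^\pm>0$ to control $\varphi^\pm$ near the singular sphere which stays at distance $\geq 1-2\rho>0$). From this, $\Y(\mu,\cdot):\WInfB\to\WInfB$ is well-defined, maps into functions valued in $(0,1)$, and is Fr\'echet differentiable in $w$ with $\partial_w\Y$ locally Lipschitz; moreover $\Y$ depends on $\mu$ only through the two linear functionals $\G^\pm(\cdot\,\mu)$, which, because we measure $\mu$ in $\MKB$, requires the kernels $y\mapsto G^\pm(t,x,y)$ to be $W^{1,\infty}$ in $y$ on $\oB$ uniformly in $x\in\oB$ — again true away from the sphere of radius $1$. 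This gives quantitative bounds
\begin{align*}
 \|\G^\pm(f\mu_1)-\G^\pm(f\mu_2)\|_{\WInfB}\leq \Cl{ker-est}\,\|f\|_{\WInfB}\,\|\mu_1-\mu_2\|_{\MKB},
\end{align*}
with $\Cr{ker-est}$ depending on the kernel data and $\rho$ only, and likewise a bound on $\|\Y(\mu_1,w)-\Y(\mu_2,w)\|_{\WInfB}$ and on $\|\partial_w\Y(\mu_1,w)-\partial_w\Y(\mu_2,w)\|_{L(\WInfB)}$ in terms of $\|\mu_1-\mu_2\|_{\MKB}$, uniformly for $w\in\Cr{W3}$, once $\Cr{R1}$ is small.

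\emph{Step 2 (contraction and existence/uniqueness).} Using the Lipschitz bound on $w\mapsto\partial_w\Y(\mu,w)$ and continuity in $\mu$ from Step 1, choose $\Cr{R1}$ and the $\MKB$-radius $\Cr{R2}$ so small that $\|\partial_w\Phi(\mu,w)\|_{L(\WInfB)}\leq\tfrac12$ for all $\mu\in\Cr{SU2}$, $w\in\Cr{W3}$; here $\Cr{R1},\Cr{R2}$ depend exactly on the listed quantities, in particular on $\|\X^{-1}\|_{L(\WInfB)}$, $\min w_0$, $\max w_0$, $\|\nabla w_0\|_{L^\infty}$. Next check $\Phi(\mu,\cdot)$ maps $\Cr{W3}$ into itself: write $\Phi(\mu,w)-w_0=\bigl(\Phi(\mu,w)-\Phi(\mu_0,w_0)\bigr)$ and split into the contraction part in $w$ (bounded by $\tfrac12\Cr{R1}$) plus the $\mu$-increment $\|\Phi(\mu,w_0)-\Phi(\mu_0,w_0)\|\leq\|\X^{-1}\|\,\Cr{ker-est}'\,\|\mu-\mu_0\|_{\MKB}$, which is $\leq\tfrac12\Cr{R1}$ once $\Cr{R2}$ is chosen accordingly; one also checks the resulting $w$ stays valued in $[0,1]$ — this is where the $(0,1)$-valuedness of $\Y$ and smallness of $\Cr{R1}$ enter — so $\Cr{W3}$ is genuinely invariant. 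Banach's fixed point theorem then yields, for each $\mu\in\Cr{SU2}$, a unique fixed point $w=\Cr{OpW1}(\mu)\in\Cr{W3}$ solving \cref{LambdaEqmu}. Uniqueness in $\Cr{W3}$ is immediate; the $C^2(\oB;(0,1))$ regularity in \cref{solmapW} follows a posteriori by bootstrapping in the fixed-point identity $w=\Y(\mu,w)$, since $K^\pm\in C^2$ and $\varphi^\pm$ is smooth on the relevant range, so $\G^\pm(w\mu)\in C^2(\oB)$, hence the right-hand side is $C^2$.

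\emph{Step 3 (Lipschitz dependence on $\mu$).} For $\mu_1,\mu_2\in\Cr{SU2}$ with $w_i=\Cr{OpW1}(\mu_i)$, subtract the two fixed-point identities $w_i=\Phi(\mu_i,w_i)$:
\begin{align*}
 w_1-w_2=\Phi(\mu_1,w_1)-\Phi(\mu_1,w_2)+\Phi(\mu_1,w_2)-\Phi(\mu_2,w_2),
\end{align*}
estimate the first difference by $\tfrac12\|w_1-w_2\|_{\WInfB}$ (Step 2 contraction) and the second by $\|\X^{-1}\|_{L(\WInfB)}\,\Cr{ker-est}'\,\|\mu_1-\mu_2\|_{\MKB}$ (Step 1), and absorb to get \cref{W-stability} with $\Cr{F-contin-mu}:=2\|\X^{-1}\|_{L(\WInfB)}\,\Cr{ker-est}'$.

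\emph{Main obstacle.} The routine part is the contraction mechanics in Step 2–3; the delicate part is Step 1 — establishing the \emph{uniform-in-$\mu$} lower bound on the denominator of $\Y$ on $\oB$ and, more importantly, the kernel estimate $\|\G^\pm(f\mu_1)-\G^\pm(f\mu_2)\|_{\WInfB}\lesssim\|f\|_{\WInfB}\|\mu_1-\mu_2\|_{\MKB}$. The latter is precisely the trade-off noted in \cref{ChI3}: controlling a $\MKB$-increment forces one to differentiate the kernels $G^\pm(t,x,\cdot)$ in $y$, and $\varphi^-$ blows up as $|x-y|\uparrow 1$, so the whole scheme hinges on the geometric fact $\diam\oB=2\rho<1$ keeping $\supp\mu$ uniformly away from each singular sphere; getting the constants to depend only on the admissible data (and not on the particular $\mu$) is the crux. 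A secondary subtlety is that $\partial_w\Y$ must be genuinely Fr\'echet (not merely Gateaux) and the inversion of $\X$ must be stable under the perturbation $\partial_w\Y(\mu,w)-\partial_w\Y(\mu_0,w_0)$, which again reduces to the quantitative estimates from Step 1 via a Neumann series.
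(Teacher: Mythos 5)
Your proposal is correct and follows essentially the same route as the paper: your map $\Phi$ is exactly the paper's operator $w-\X^{-1}(w-\Y(\mu,w))$, your Step~1 corresponds to the kernel/denominator bounds and the Fr\'echet-derivative estimates established in \cref{LemMG,LemDerbnds,Lem:contdwY}, and your Steps~2--3 (contraction uniform in $\mu$, self-map via the split through $(\mu,w_0)$, Banach fixed point, $C^2$ regularity from the fixed-point identity as in \cref{Lemma:reg_w}, and the Lipschitz bound by subtracting the two fixed-point identities) reproduce the paper's argument in \cref{SecExEq2}. The only cosmetic difference is that the paper quantifies the contraction and $\mu$-increments via a Taylor/integral representation along convex combinations (using $|\mu(s)|=|\mu_0|$ on the mass-preserving convex set) rather than a direct bound on $\|\partial_w\Phi\|$, which is the same mechanism.
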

\begin{Remark}%
The Fr\'echet partial differentiability of $\Y$ with respect to $w$ in the case of a spatial domain being a ball of radius smaller then $1/2$ is established in \cref{Lem:contdwY}.%
\end{Remark}
\begin{Remark}
 By \cref{Lemma:reg_w}, every solution to \cref{LambdaEqmu}  satisfies $w\in C^2(\oB;(0,1))$. Hence, the requirement $w\in \Cr{W3}\cap C^2(\oB;(0,1))$ in \cref{solmapW} only restricts the distance to $w_0$.
\end{Remark}

The proof of \cref{Thm:wEqn-FP} is given in \cref{SecExEq2}. %
\begin{Theorem}[Local well-posedness of \cref{PLnewIBVP}-\cref{LambdaEq}]
\label{thmmain}
Let $d\in\N$, \cref{H,Asschi,AssF} and  \cref{DefG2,AssPhi,Kpm,aabb} hold, and be given: numbers $m,m_{\infty},\rho>0$ such that
\begin{align}
 &\rho<\min\left\{\frac{1}{2},\frac{d+2}{m\|\chi'\|_{L^{\infty}(\Rpositive)}\|F\|_{L^{\infty}(0,1)}}\right\},\label{rho0}
\end{align}
 and a pair $(\mu_0,w_0)$ that satisfies assumption \cref{Assmu0w0} of \cref{Thm:wEqn-FP} 
 and
\begin{align}
 |\mu_0|=m.\nonumber
\end{align}
Define 
\begin{alignat*}{3}
 \Cl[U]{U01}:=&\Big\{0\leq u_0\in \LInfn:&&\quad\supp(u_0)\subset \overline{B_{\frac{1}{4}\rho}},\\
 &&&\quad \|u_0\|_{\LInfn}\leq m_{\infty},\\
 &&&\quad\|u_0\|_{\LOnen}= m,\\
 &&&\quad\|u_0-\mu_0\|_{\MKB}\leq \frac12\Cr{R2}\Big\}.
\end{alignat*}
Then, there exists a number $\Cl[T]{Tstar}$ that depends only on the parameters from \cref{ParFct} as well as  
\begin{align}
&m,\ m_{\infty},\ \rho,\ 
\|\nabla w_0\|_{\LInfB},\  \min_{\oB}w_0,\  \max_{\oB}w_0, \
 \left\|\X^{-1}\right\|_{{L\left(\WInfB\right)}},\label{thmmainPar}                                                                                                                                                                                                                  \end{align}
such that for any $u_0\in\Cr{U01}$ there exists a pair  of functions $(u,w)$ that solves \cref{PLnewIBVP}-\cref{LambdaEq} in $[0,\Cr{Tstar}]\times\R^d$ in the sense of \cref{DefSol} and  satisfies
\begin{subequations}\label{Condu}
\begin{align}
 &\supp(u(t,\cdot))\subset \Br\qquad\text{for all }t\in[0,\Cr{Tstar}],\label{SRsuppu}
 \\
&\|u-u_0\|_{C([0,\Cr{Tstar}],\MKB)}\leq \Cr{R2},%
\end{align}
\end{subequations}
and 
\begin{subequations}\label{Condw}
\begin{align}
 &w(t,\cdot)\in C^2(\oB;(0,1))\qquad\text{for all }t\in[0,\Cr{Tstar}],\\
 &w\in C([0,\Cr{Tstar}];\WInfn),\label{wReg1}
 \\
 &\|w-w_0\|_{C([0,\Cr{Tstar}];\WInfB)}\leq \Cr{R1}.\label{wBall}
\end{align}
\end{subequations}
In the above, constants $\Cr{R2}$ and $\Cr{R1}$ are from \cref{Thm:wEqn-FP}.

The solution is locally unique in the following sense: if for some $T\in(0,\Cr{Tstar}]$ another solution $(\wu,\ww)$ in $[0,T]\times\R^d$ satisfies
\begin{subequations}\label{AssUniq1}
\begin{align}
&\ww\in C([0,T];\WInfn),\label{wReg1_}
 \\
  &\|\ww-w_0\|_{C([0,T];\WInfB)}\leq \Cr{R1},\label{wBall_}
\end{align}
\end{subequations}
then for all $t\in [0,T]$ it holds that
\begin{subequations}\label{uniqmean}
\begin{alignat}{3}
 &u(t,\cdot)=\wu(t,\cdot)&&\qquad\text{ a.e. in }\R^d,\\
 &w(t,\cdot)=\ww(t,\cdot)&&\qquad\text{ in }\oB.
\end{alignat}
\end{subequations}

Finally, a Lipschitz property holds: for every $u_0,\wu_0\in\Cr{U01}$ the corresponding solutions $(u,w)$ and $(\wu,\ww)$ in $[0,\Cr{Tstar}]\times\R^d$  satisfy 
\begin{align}
 \max_{[0,\Cr{Tstar}]}\left\|u-\wu\right\|_{\LOneB}+\left\|w-\ww\right\|_{C([0,\Cr{Tstar}];\WInfB)}\leq\Cl{CLipS}\left\|u_0-\wu_0\right\|_{L^1\left(B_{\frac{\rho}{4}}\right)},\label{mainLip}
\end{align}
where constant $\Cr{CLipS}$ only depends on parameters from \cref{ParFct,thmmainPar}.
\end{Theorem}
This Theorem is proved in \cref{SecMainProof}.%

Solutions to \cref{M_LambdaEq} and \cref{M_PLnewIBVP}-\cref{M_LambdaEq} can be recovered from the corresponding solutions to \cref{LambdaEq} and \cref{PLnewIBVP}-\cref{LambdaEq}, respectively. Moreover, there are no other solutions, as the next two theorems imply.
\begin{Theorem}[Time-independent solutions to \cref{M_LambdaEq} vs. \cref{LambdaEq}]\label{CompThmLambdaEq}
 If $(\mu,\nu)$ is a time-independent solution to \cref{M_LambdaEq}, then $\left(\mu,\frac{d\nu}{d\mu}\right)$ is a time-independent solution to \cref{LambdaEq}.
\end{Theorem}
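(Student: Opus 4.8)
The plan is to exploit that the assignment $(\mu,\nu)\mapsto\bigl(\mu,\tfrac{d\nu}{d\mu}\bigr)$ leaves the first slot untouched and, for a suitably chosen Borel representative $w$ of the density, satisfies $w\mu=\nu$ \emph{exactly}; once that is in place, the statement is immediate from \cref{DefEqY}, whose clause for \cref{LambdaEq} only asks that $w:\R^d\to[0,1]$ be Borel and that $(\mu,w\mu)$ solve \cref{M_LambdaEq}.

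\emph{Step 1: the density exists.} From $\nu\le\mu$ in $\R^d$, every $\mu$-null Borel set is $\nu$-null, so $\nu\ll\mu$; since $\mu,\nu\in\MPn$ are finite, the Radon--Nikodym theorem yields a Borel-measurable density $w\ge0$ with $\nu=w\mu$. \emph{Step 2: the density lies in $[0,1]$ after a harmless modification.} Both $\nu=w\mu$ and $\mu-\nu=(1-w)\mu$ are nonnegative measures (the second because $\nu\le\mu$); testing them against the characteristic functions of the Borel sets $\{w<0\}$ and $\{w>1\}$ forces these sets to be $\mu$-null, so $0\le w\le1$ $\mu$-a.e. Replacing $w$ by $w\,\one_{\{0\le w\le1\}}$ gives a Borel function with values in $[0,1]$ at \emph{every} point that differs from the original only on a $\mu$-null — hence $\nu$-null — set, so one still has $w\mu=\nu$.

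\emph{Step 3: conclusion.} With this $w$ one has $(\mu,w\mu)=(\mu,\nu)$ verbatim, and by hypothesis the latter is a time-independent solution of \cref{M_LambdaEq} in the sense of \cref{DefEqY}(1): conditions (a)--(b) there involve $\mu$ alone and are untouched, while (c)--(e) are precisely the statements about $\nu=w\mu$ (in particular \cref{M_LambdaEq_m} is literally the same identity, with the same $0/0:=1$ convention). Since $w$ is moreover Borel with values in $[0,1]$, the pair $(\mu,w)$ meets both requirements of \cref{DefEqY}(2), i.e.\ it is a time-independent solution of \cref{LambdaEq}.

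The argument is entirely soft. The only point that needs a little care is Step 2 — producing a genuinely everywhere $[0,1]$-valued Borel representative rather than merely a $\mu$-a.e.\ one, while keeping $w\mu=\nu$ — and this is a routine measure-theoretic adjustment; I do not expect any substantial obstacle.
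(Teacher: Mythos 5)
Your proposal is correct and takes essentially the same route as the paper: existence of the Radon--Nikodym density from $\nu\le\mu$, verification that the density lies in $[0,1]$, and then the conclusion directly from the definition of a solution to \cref{LambdaEq}. The only (harmless) deviations are that you obtain $0\le w\le 1$ $\mu$-a.e.\ by testing the nonnegativity of $\nu$ and $\mu-\nu$ on $\{w<0\}$ and $\{w>1\}$, where the paper invokes the Lebesgue differentiation theorem, and that you explicitly pass to an everywhere $[0,1]$-valued Borel representative, a detail the paper leaves implicit.
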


\begin{Theorem}[Solutions to \cref{M_PLnewIBVP}-\cref{M_LambdaEq} vs. \cref{PLnewIBVP}-\cref{LambdaEq}]\label{M_thmmain} 
 If $(u,v)$ is a solution to \cref{M_PLnewIBVP}-\cref{M_LambdaEq} in $[0,T]\times\R^d$, then $\left(u,w\right)$ is a solution to \cref{PLnewIBVP}-\cref{LambdaEq} for 
 some $w$ that for all $t\in[0,T]$ satisfies
 \begin{align}
 &w(t,\cdot)\text{ is Borel},\nonumber\\
 & w(t,\cdot)={\frac{v}{u}}(t,\cdot)\qquad\text{a.e. in }\{u>0\}.\nonumber
 \end{align}

\end{Theorem}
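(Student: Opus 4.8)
The plan is to treat this as a change-of-variables statement that simply reverses the passage from $(u,v)$ to $(u,w)$ described in \cref{SecCh}: given a solution $(u,v)$, the fraction of bound receptors is recovered as the pointwise ratio $v/u$ wherever $u$ is positive. No new analytic estimate is needed; the whole argument consists in defining $w$ correctly and checking that each clause of \cref{DefIPDE,DefEqY,DefSol} is insensitive to replacing $v$ by $wu$, the two agreeing almost everywhere.

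First I would extract the pointwise ordering $0\le v\le u$ from the hypothesis: by \cref{DefSol}, for every $t\in[0,T]$ the pair $(u(t,\cdot),v(t,\cdot))$ solves \cref{M_LambdaEq} in the sense of \cref{DefEqY}, so $v(t,\cdot)\LL^d\in\MPn$ and $v(t,\cdot)\LL^d\le u(t,\cdot)\LL^d$, whence
\begin{align*}
 0\le v(t,\cdot)\le u(t,\cdot)\qquad\text{a.e. in }\R^d,\ \text{for every }t\in[0,T].
\end{align*}
Fixing, for each $t$, Borel representatives of $u(t,\cdot)$ and $v(t,\cdot)$ (which are bounded measurable functions), I would then set
\begin{align*}
 w(t,x):=\begin{cases}\dfrac{v(t,x)}{u(t,x)}&\text{if }u(t,x)>0,\\[1ex] 0&\text{if }u(t,x)=0,\end{cases}
\end{align*}
so that for every $t$ the function $w(t,\cdot)$ is Borel, takes values in $[0,1]$ by the ordering above, and satisfies $w(t,\cdot)u(t,\cdot)=v(t,\cdot)$ a.e.\ in $\R^d$ (the two sides coincide on $\{u(t,\cdot)>0\}$ and both vanish on $\{u(t,\cdot)=0\}$, where $0\le v\le u=0$). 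In particular $wu=v$ a.e.\ in $E_T$, and $wu$ therefore inherits from $v$ a jointly measurable representative lying in $L^{\infty}(0,T;\LInfn)$.

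It then remains to substitute this identity into the definitions. Every regularity and (very) weak-formulation requirement of \cref{DefIPDE} depends on $v$ only through its a.e.\ equivalence class and does not involve $w$ otherwise, while $u$ is unchanged; hence $(u,wu)=(u,v)$ solves \cref{M_PLnewIBVP} in the sense of \cref{DefIPDE}, which by part (2) of that definition means exactly that $(u,w)$ solves \cref{PLnewIBVP} in the sense of \cref{DefIPDE}. Similarly, for each $t\in[0,T]$ we have $w(t,\cdot)u(t,\cdot)\LL^d=v(t,\cdot)\LL^d$ as measures, so the pair $\big(u(t,\cdot)\LL^d,\,w(t,\cdot)u(t,\cdot)\LL^d\big)=\big(u(t,\cdot)\LL^d,\,v(t,\cdot)\LL^d\big)$ solves \cref{M_LambdaEq} by hypothesis; together with $w(t,\cdot)$ being Borel and $[0,1]$-valued, this is precisely the statement that $(u(t,\cdot),w(t,\cdot))$ solves \cref{LambdaEq} in the sense of \cref{DefEqY} (the pointwise identity $w=\Y(u,w)$ holding $u(t,\cdot)$-a.e.\ is then automatic by the remark following \cref{DefEqY}). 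Invoking \cref{DefSol}, $(u,w)$ solves \cref{PLnewIBVP}-\cref{LambdaEq} in $[0,T]\times\R^d$, and $w=v/u$ a.e.\ on $\{u>0\}$ holds by construction. The main---indeed, the only---point requiring care is the measurable-representative bookkeeping: choosing the representatives of $u(t,\cdot),v(t,\cdot)$ consistently in $t$ and confirming that each clause of the three solution definitions is invariant under a.e.\ modification of $v$; there is no genuine analytic obstacle here.
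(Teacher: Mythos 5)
Your proof is correct and takes essentially the same route as the paper: the paper's proof simply cites \cref{CompThmLambdaEq}, whose Radon--Nikodym derivative $\frac{d(v\LL^d)}{d(u\LL^d)}$ is exactly your explicit ratio $v/u$ on $\{u>0\}$, together with the remark that a Lebesgue-measurable function coincides a.e.\ with a Borel one. The only point worth tightening is that $0\le v\le u$ holds only a.e., so your $w$ takes values in $[0,1]$ only after modification on a Lebesgue-null set; since this changes neither $wu$ nor any clause of \cref{DefIPDE,DefEqY,DefSol}, it is exactly the bookkeeping you already flag.
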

These Theorems are proved in \cref{SecExEq2,SecMainProof}, respectively.

\section{{Well-posedness of  {\texorpdfstring{\cref{LambdaEq}}{}} for fixed \texorpdfstring{$u(=:\mu)$}{u}}} \label{Sec:wEqn}
The main goal of this Section is to prove \cref{Thm:wEqn-FP}, a result on local well-posedness of equation \cref{LambdaEqmu} for given $\mu$ from a suitable subset of a space of measures and sufficiently close to some $\mu_0$ for which a solution is assumed to exist. 
Throughout the Section we assume   %
\cref{DefG,DefG2,AssPhi,Kpm,aabb} to hold and also that all involved functions/measures are time-independent. To highlight the fact that the first argument of $\Y$ can be a measure, we denote it by $\mu$ rather than $u$.

We begin by properly defining operator $\Y$ and studying its  properties, first in general domains in \cref{SecY} and then in a small ball in \cref{SecYball}. Some of the presented results are not needed in \cref{SecExEq2}, where we prove \cref{Thm:wEqn-FP}. Still, they provide  useful insights into the nature of this new operator.

\subsection{{Operator \texorpdfstring{$\Y$}{Y}}}\label{SecY}
{Throughout this Subsection we assume 
\begin{align}
 \Omega\subset\R^d\text{ a domain}.\nonumber
\end{align}
Here we study operator $\Y$ from \cref{Y} for measures in $\oOm$ as its first argument. We start with operators $\G^\pm$. 
Set
\begin{align}
 &\G^{\pm}\mu(x):=
 \int_{\oOm}G^{\pm}(x,y) \,d\mu(y),\nonumber
\end{align}
whenever the integral exists, and
\begin{subequations}\label{DefGs}
\begin{align}
   &\MGO:=\left\{\mu\in \MO:\ \G^-|\mu|<\infty \ \text{in }\oOm\right\},\\
   &\MGPO:= \MGO\cap\MPO.
\end{align}
\end{subequations}
Our first Lemma ensures that set $\MGO$ and operators $\G^{\pm}$ applied to its elements are well-defined.}
\begin{Lemma}\label[Lemma]{LemGwd}
 \begin{enumerate}[label=(\arabic*), ref=(\arabic*)]
\item\label{GpmWD1} 
Let $\mu\in \MPO$. Then $\G^{\pm}\mu:\oOm\to [0,\infty]$ are well-defined Borel functions.   
\item $\MGO$ and $\MGPO$ are well-defined linear subspace and convex subset, respectively, of $\MO$.
\item\label{GpmWD3} Let $\mu\in\MGO$. Then $\G^{\pm}\mu:\oOm\to [0,\infty)$ are well-defined Borel functions.
\end{enumerate}
\end{Lemma}
\begin{proof}
 Being superpositions of continuous functions and the characteristic function of an interval, {hence of Borel functions,} $$G^\pm:\oOm\times\oOm\to \Rnonnegative$$ are 
Borel. 
 By Fubini's theorem,
 $$\G^{\pm}\mu_{\pm}:\oOm\to [0,\infty]$$ are well-defined Borel functions for every $\mu\in\MO$.
 
 Further, due to continuity of $K^+$ and $\varphi^+$ and compactness of the support of the latter, $G^+(x,\cdot)$ is bounded for all $x\in\oOm$. Hence, it is integrable against every $\mu\in\MO$.
 
 The above observations combined readily imply that \cref{GpmWD1,GpmWD3} hold and sets \cref{DefGs} are well-defined.

 Finally, it is obvious that $\MGO$ is a linear subspace of $\MO$ and $\MGPO$ is its convex subset.
\end{proof}

Next, we {define operator $\Y$ for} measures. 
Let 
\begin{align}
 \psi:\Rnonnegative\times\Rnonnegative\to[0,1],\quad \psi(a,b):=\begin{cases}\frac{a}{a+b}&\text{for }(a,b)\neq(0,0),\\
1&\text{for }(a,b)=(0,0).\end{cases}\label{psi}
\end{align}
The following Lemma can be easily verified by induction.
\begin{Lemma}\label[Lemma]{Lempsi} Let $\psi$ be as defined in \cref{psi}. Then $\psi\in C^{\infty}((\Rnonnegative\times\Rnonnegative)\backslash\{(0,0)\})$, and for all $0\leq k_1\leq k$ it holds that
\begin{align*}
 \partial^{k}_{a^{k_1}b^{k-k_1}}\psi(a,b)=\frac{C_{k_1,k,1}a+C_{k_1,k,2}b}{(a+b)^{k+1}}\quad\text{for all }(a,b)\in(\Rnonnegative\times\Rnonnegative)\backslash\{(0,0)\}
\end{align*}
and 
\begin{align}
 \left|\partial^{k}_{a^{k_1}b^{k-k_1}}\psi(a,b)\right|\leq C_k(a+b)^{-k}\quad\text{for all }(a,b)\in(\Rnonnegative\times\Rnonnegative)\backslash\{(0,0)\}\label{estdirpsi}
\end{align}
for some constants
$C_{k_1,k,1},C_{k_1,k,2}\in\R$ and  $C_k>0$.
\end{Lemma}
For $\mu\in \MGPO$ and a Borel function $w:\oOm\to [0,1]${,} we set
\begin{align}
 \Y(\mu,w):=\psi(\G^{+}((1-w)\mu),\G^{-}(w\mu)).\nonumber%
\end{align}
The subsequent Lemma infers that $\Y$ is well-defined and measurable.
\begin{Lemma}\label[Lemma]{LemYwd}
 Let $\mu\in\MGPO$, $\nu\in\MPO$, and $\nu\leq\mu$. Then $$\psi(\G^{+}(\mu-\nu),\G^{-}\nu):\oOm\to [0,1]$$ is a well-defined Borel function.
\end{Lemma}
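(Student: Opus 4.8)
The plan is to reduce everything to the measurability and well-definedness properties already packaged in \cref{LemGwd,Lempsi}. First I would observe that, since $\mu\in\MGPO$ and $0\le\nu\le\mu$, the measure $\nu$ is a positive finite Radon measure dominated by $\mu$; hence $\nu\in\MGPO$ as well, because $\G^-\nu\le\G^-\mu<\infty$ pointwise on $\oOm$ (the kernel $G^-$ being nonnegative). Likewise $\mu-\nu\in\MPO$ and $\G^-(\mu-\nu)\le\G^-\mu<\infty$, so $\mu-\nu\in\MGPO$ too. By \cref{LemGwd}\cref{GpmWD3} applied to $\mu-\nu$ and to $\nu$, both
\begin{align*}
 \G^{+}(\mu-\nu):\oOm\to[0,\infty)\qquad\text{and}\qquad \G^{-}\nu:\oOm\to[0,\infty)
\end{align*}
are well-defined Borel functions.

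Next I would invoke \cref{Lempsi}: the function $\psi$ of \cref{psi} is smooth, in particular Borel, on $(\Rnonnegative\times\Rnonnegative)\backslash\{(0,0)\}$, and by its very definition it is also defined (with value $1$) at the origin; a single-point modification of a Borel function is Borel, so $\psi:\Rnonnegative\times\Rnonnegative\to[0,1]$ is Borel on all of its domain. Therefore the composition
\begin{align*}
 x\mapsto \psi\bigl(\G^{+}(\mu-\nu)(x),\ \G^{-}\nu(x)\bigr)
\end{align*}
is a Borel function $\oOm\to[0,1]$, being the superposition of the Borel map $x\mapsto(\G^+(\mu-\nu)(x),\G^-\nu(x))$ with the Borel function $\psi$. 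This is exactly the assertion, since $\psi(\G^+(\mu-\nu),\G^-\nu)$ takes values in $[0,1]$ by construction of $\psi$.

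The only genuinely non-routine point — and the one I would be most careful about — is the composition-of-Borel-maps step, since in general the composition of two Borel functions need not be Borel. Here it is fine because $\psi$ is not merely Borel but continuous off a single point, so for any open $U\subset[0,1]$ the preimage $\psi^{-1}(U)$ is the union of an open set and possibly the singleton $\{(0,0)\}$, hence Borel, and pulling this back through the Borel (indeed the coordinate-wise Borel) map $(\G^+(\mu-\nu),\G^-\nu)$ yields a Borel subset of $\oOm$. I would spell this out rather than appealing to a black-box composition lemma. Everything else — the domination $\nu\le\mu$ transferring finiteness of $\G^-$, and the finiteness of $\G^+$ on all of $\MPO$ noted in the proof of \cref{LemGwd} — is immediate from nonnegativity of the kernels and monotonicity of the integral.
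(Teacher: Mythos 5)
Your proposal is correct and follows essentially the same route as the paper: show $\mu-\nu,\nu\in\MGPO$ from the domination $\nu\leq\mu$, apply \cref{LemGwd} to get that $\G^{+}(\mu-\nu)$ and $\G^{-}\nu$ are Borel, note $\psi$ is Borel (continuous off the single point $(0,0)$), and compose. One small inaccuracy in your side remark: the composition of Borel functions \emph{is} always Borel (preimages of Borel sets under Borel maps are Borel), so the extra care you take there is harmless but unnecessary — it is compositions involving merely Lebesgue-measurable functions that can fail.
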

\begin{proof}
 Due to the assumptions on $\mu$ and $\nu$, we have $\mu-\nu,\nu\in\MGPO$, so \cref{LemGwd} applies and yields that  $\G^{+}(\mu-\nu),\G^{-}\nu:\oOm\to \Rnonnegative$ are well-defined Borel functions. Further, $\psi$ is continuous at every point of its domain of definition apart from $(0,0)$. Hence it is Borel.
 
 Altogether, we have $\psi(\G^{+}(\mu-\nu),\G^{-}\nu):\oOm\to [0,1]$ is well-defined and, as a composition of Borel functions, it is also Borel.
\end{proof}

{To study directional derivatives of $\Y$, we introduce} sets of 'admissible' directions
\begin{alignat}{7}
 &{\K_{\mu}}:=\{{k_{\mu}}\in \MKO:&&\quad\mu+s_0{k_{\mu}}\in\MPO&&\ \text{ for some }s_0>0\}&&\qquad\text{for }\mu\in\MPO,\nonumber%
\\
 &\HH_w:=\{h:\oOm\to[-1,1]\text{ Borel}:&&\quad 0\leq w+s_0h\leq 1\text{ in }\oOm&&\ \text{ for some }s_0>0\}&&\qquad \text{for }w:\oOm\to[0,1]\text{ Borel}.\nonumber%
\end{alignat}
{Our next Lemma establishes Gateaux differentiability at a spatial point $x$.}
\begin{Lemma}\label[Lemma]{LemDD}
Assume  $\mu_0\in\MGPO$, $w_0:\oOm\to[0,1]$ Borel, and $x\in\R^d${,} such that
\begin{align}
 \mu_0(\oOm\cap B_1(x))>0.\label{PosB}
\end{align}
{Then, 
\begin{align}
\G^+((1-{w_0})\mu_0)(x)+\G^-({w_0}\mu_0)(x)>0, \label{PosBx}                     
\end{align}
and} functional 
$$(\mu,w)\mapsto\Y(\mu, w)(x)$$
possesses Gateaux partial differentials of any order with respect to $\mu$ and $w$ at $(\mu_0,w_0)$ along the directions from ${\K_{\mu_0}}\cap\MGO$ and $\HH_{w_0}$, respectively. In particular,
we have the following formulas:
\begin{subequations}\label{DerYx}
 \begin{align}
  (\partial_w\Y(\mu_0, {w_0})(x))h_1=& 
  D\psi(\G^{+}((1-{{w_0}})\mu_0),\G^{-}({{w_0}}\mu_0))\cdot(-\G^{+}(h_1\mu_0),\G^{-}(h_1\mu_0))(x), \label{der1}\\
  (\partial_{\mu}\Y(\mu_0, {w_0})(x)){k_{\mu_0}}=& 
  D\psi(\G^{+}((1-{{w_0}})\mu_0),\G^{-}({{w_0}}\mu_0))\cdot(\G^{+}((1-{{w_0}}){k_{\mu_0}}),\G^{-}({w_0}{k_{\mu_0}}))(x), \label{der2}
  \end{align}
  and
  \begin{align}
  (\partial_w(\partial_w\Y(\mu_0, {w_0})(x))h_1)h_2
  =& D^2\psi(\G^{+}((1-{{w_0}})\mu_0),\G^{-}({{w_0}}\mu_0))(-\G^{+}(h_1\mu_0),\G^{-}(h_1\mu_0))^T\nonumber\\
  &\cdot (-\G^{+}(h_2\mu_0),\G^{-}(h_2\mu_0))^T(x),\label{der3}\\
  (\partial_{\mu}(\partial_w\Y(\mu_0, {w_0})(x))h_1){k_{\mu_0}}
  =& D^2\psi(\G^{+}((1-{{w_0}})\mu_0),\G^{-}({{w_0}}\mu_0))(-\G^{+}(h_1\mu_0),\G^{-}(h_1\mu_0))^T\nonumber\\
  &\cdot (\G^{+}((1-{{w_0}}){k_{\mu_0}}),\G^{-}({w_0}{k_{\mu_0}}))^T(x)\nonumber\\
  &+D\psi(\G^{+}((1-{{w_0}})\mu_0),\G^{-}({{w_0}}\mu_0))\cdot(-\G^{+}(h_1{k_{\mu_0}}),\G^{-}(h_1{k_{\mu_0}}))(x)\label{der4}
 \end{align}
 \end{subequations}
 for all ${k_{\mu_0}}\in {\K_{\mu_0}}\cap\MGO$ and $h_1,h_2\in \HH_{w_0}$.
\end{Lemma}
\begin{proof}
Let ${k_{\mu_0}}\in{\K_{\mu_0}}\cap\MGO$ and let $s_0>0$ be a number such that $\mu_0+s_0{k_{\mu_0}}\in\MGPO$. Since $0\in{\K_{\mu_0}}$ and $\MGPO$ is convex, we have $\mu_0+s{k_{\mu_0}}\in\MGPO$ for all  $s\in[0,s_0]$.
Similarly, for $h\in\HH_{w_0}$ 
we obtain 
$0\leq w_0+sh\leq 1$ 
for all  $s\in[0,{s_0}]$. Therefore, taking the limits necessary to compute partial directional derivatives along such directions is possible. 

{Next, we observe that \cref{PosB} implies \cref{PosBx}.}
This is because{, firstly,} $\varphi^\pm$ and $K^\pm$ are strictly positive in $B_1(x)$ ({compare} \cref{Kpm,AssPhi}), so that the kernels $G^\pm(x,\cdot)$ are strictly positive there{, and secondly,} $1-w_0,w_0\geq 0$, and $(1-w_0)\mu_0+w_0\mu_0=\mu_0$.  Consequently, $\G^+((1-{w_0})\mu_0)(x)\geq0$ and $\G^-({w_0}\mu_0)(x)\geq0$ cannot be zero at the same time.

Thus,  $\psi$ is infinitely many times differentiable at $(\G^{+}((1-{w_0})\mu_0),\G^{-}({w_0}\mu_0))(x)$. This, the fact that $(\mu,w)\mapsto\G^\pm(w\mu)$ is bilinear, and the chain rule together imply the infinite directional differentiability of $(\mu, w)\to\Y(\mu, w)(x)$ at $(\mu_0,w_0)$ and formulas  \cref{DerYx}.

\end{proof}

\subsection{{Properties of \texorpdfstring{$\Y$}{Y} for \texorpdfstring{$\Om$}{} a small ball}}\label{SecYball}
In this Subsection we assume 
\begin{align}
 \Omega=\Br\qquad\text{for some }\rho\in\left(0,\frac{1}{2}\right).\label{OmBall}
\end{align}
\begin{Notation}
 To simplify the notation, we do not explicitly mention the dependence on $\rho$ for constants, mappings, and sets that we introduce in this and subsequent Subsections.
\end{Notation}
The properties of operator $\Y$ that we derive below for this special case allow to establish local existence of \cref{LambdaEqmu} in the subsequent \cref{SecExEq2}.

As previously observed in \cref{SecCh}\cref{ChI3}, the smallness of the diameter of domain $\Omega$ allows to avoid singularities in $G^-$, as well as zeros in the denominator of $\Y$, ensuring, as we see below in \cref{LemMG} that property \cref{DefG2} holds in the whole of $\oOm$. We formulate a Lemma that addresses the properties of $G^\pm$ in this case.
\begin{Lemma}\label[Lemma]{RemG2}
 Let $\rho\in\left(0,{1}/{2}\right)$ hold. Then,
 \begin{subequations}\label{PropGsBall}
\begin{align}
&G^{\pm}\in C^2(\oB\times\oB),\label{regC2}\\
 &\|G^{\pm}\|_{C^2(\oB\times \oB)}\leq\Cl{Cro},\label{estGG}\\
 &\Cl{CGlb}:=\min\left\{G^{\pm}(x,y):\ (x,y)\in \oB\times \oB\right\}>0.\label{Glb}
 \end{align}
\end{subequations}
\end{Lemma}

\begin{proof}
Since $\rho\in\left(0,{1}/{2}\right)$, we have with \cref{DefG2,AssPhi,aabb} that
\begin{align}
 |x-y|<1\qquad\text{for all }(x,y)\in \oB\times\oB.\label{xy}
\end{align}
Combined with assumptions $0<K^{\pm}\in C^2$ and  $0<\varphi^\pm\in C^2[0,1)$, \cref{xy} yields properties \cref{PropGsBall}.

\end{proof}

Our next Lemma collects several consequences of \cref{OmBall} for operators $\G^\pm$ and $\Y$.

\begin{Lemma}\label[Lemma]{LemMG}
Let  $\rho\in\left(0,{1}/{2}\right)$. Then:
 \begin{enumerate} 
\item\label{itemM1} $\MB={\mathcal M}_G(\oB)$;
\item\label{itemM2} if $\mu\in{\mathcal M}^+(\oB)$, then 
\begin{align}
 \mu(\oB\cap B_1(\cdot))=|\mu|\qquad\text{in }\oB\label{estmu}
\end{align}  
and for all {$w:\oB\to[0,1]$ Borel}
\begin{align} 
 \G^{+}((1-w)\mu)+\G^{-}(w\mu)
 \geq& \Cl{B3}|\mu|\qquad\text{in }\oB;\label{lowerbnd}
\end{align}
\item\label{item3} $\G^{\pm}\in L(\MB; C^2(\oB))$ and 
\begin{align}
\|\G^{\pm}\|_{L(\MB; C^2(\oB))}\leq \Cr{Cro};\label{estnorm1}
\end{align}
\item\label{item4} $\G^{\pm}((\cdot)(\cdot))\in B(\WInfB\times\MKB;C^1(\oB))$ and 
\begin{align}
 \|\G^{\pm}((\cdot)(\cdot))\|_{B(\WInfB\times\MKB;C^1(\oB))}\leq\Cl{CroG3};\label{estnorm2}
\end{align}
\item\label{item5} $\Y:((\MPB)\backslash\{0\})\times \{w:\oB\to[0,1]\ \text{Borel}\}\to C^2(\oB;[0,1])$.
\end{enumerate}

\end{Lemma}
\begin{proof}
 Thanks to \cref{regC2}, function $G^-(x,\cdot)$ is continuous on $\oB$. Hence,  $\G^-|\mu|<\infty$ for any $\mu\in\MB$, so \cref{itemM1} holds.
If $\mu\in\MPB$, {then  assumption $\rho\in(0,1/2)$ implies that
\begin{align*}
 \oB\subset B_1(x)\qquad\text{for all }x\in\oB,
\end{align*}
yielding \cref{estmu}. 
As to \cref{lowerbnd}, it obviously  follows with \cref{Glb,estmu,DefG}.}

{Next, we observe each $\mu\in\MB$ produces functions $\G^{\pm}\mu$ that inherit the smoothness that  $G^{\pm}$ has and that differentiation of $\G^{\pm}\mu$ is interchangeable with integration in the definitions of $G^\pm$.} This is a direct consequence of \cite[Chapter 6, Theorems 6.27-6.28]{Klenke} applied to each coordinate $x_k$ separately. In particular, \cref{regC2} implies that $G^{\pm}\mu\in C^2(\oB)$. Further, since $h\mu\in\MB$ for $h\in\WInfB$ and $\mu\in\MB$, we have that $\G^{\pm}(h\mu)\in C^2(\oB)$. Also, since $\G^{\pm}$ is linear, $\G^{\pm}((\cdot)(\cdot))$ is bilinear. Hence the linear and bilinear operators in \cref{item3,item4} are well-defined between the required spaces.  Checking  \cref{estnorm1} and thus boundedness of $\G^{\pm}$ is straightforward, given that integration and differentiation can be interchanged.
 Combining  \cref{estGG,dualKR} with the product rule, we can estimate as follows:
\begin{align}
 \underset{x\in\oB}{\max}\,|\G^{\pm}(h\mu)(x)|\leq&\underset{x\in\oB}{\max}\,\|\nabla_y (G^{+}(x,\cdot)h)\|_{(\LInfB)^n}\|\mu\|_{\MKB}\nonumber\\
 \leq&\|G^{\pm}\|_{C^1(\oB\times \oB)}
 \|h\|_{\WInfB}\|\mu\|_{\MKB}\nonumber\\\leq&\Cr{Cro}\|h\|_{\WInfB}\|\mu\|_{\MKB}\nonumber
\end{align}
and
\begin{align}
 \underset{x\in\oB}{\max}\,|\nabla_x\G^{\pm}(h\mu)(x)|\leq&\underset{x\in\oB}{\max}\,\left\|\nabla_y (\nabla_x^T G^{+}(x,\cdot)h)\right\|_{(\LInfB)^n}\|\mu\|_{\MKB}\nonumber\\
 \leq&\|G^{\pm}\|_{C^2(\oB\times \oB)}
 \|h\|_{\WInfB}\|\mu\|_{\MKB}\nonumber\\\leq&\Cr{Cro}\|h\|_{\WInfB}\|\mu\|_{\MKB}\nonumber
\end{align}
for all $h\in \WInfB$ and $\mu\in\MKB$, yielding \cref{estnorm2}. 

By \cref{item3}, $\G^+((1-w)\mu),\G^-(w\mu)\in C^2(\oB)$ for $\mu\in\MPB$ and $w:\oB\to[0,1]$ Borel. Thanks to  \cref{lowerbnd} the sum of these functions is bounded below by a positive number {if $\mu\neq0$}. Together with the chain rule and the smoothness of $\psi$ away from $(0,0)$, this implies \cref{item5} {and} completes the proof.
\end{proof}
Our next Lemma shows that under the assumption   $\rho\in\left(0,{1}/{2}\right)$ any solution to \cref{LambdaEqmu} cannot attain values $0$ and $1$ anywhere in $\oB$.
\begin{Lemma}\label[Lemma]{Lemma:reg_w}
Let  $\rho\in\left(0,{1}/{2}\right)$. Let a pair of $\mu\in (\MPB)\backslash\{0\}$ and a Borel $w:\oB\to[0,1]$ be a solution to \cref{LambdaEqmu} {at every point $x\in\oB$}. Then,
\begin{align*}
    &w\in C^2(\oB),\\
    &0<\min_{\oB}{w}\leq\max_{\oB}w<1.
\end{align*}

\end{Lemma}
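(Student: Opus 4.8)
The statement asserts two things about any solution $(\mu,w)$ to \cref{LambdaEqmu} with $\mu\in(\MPB)\backslash\{0\}$ and $\rho<1/2$: first, interior regularity $w\in C^2(\oB)$, and second, the two-sided strict bound $0<\min_{\oB}w\leq\max_{\oB}w<1$. The natural order is to establish the strict bounds first, because they are exactly what lets us write $w$ as a genuine ratio away from the degenerate corner $(0,0)$ of $\psi$, and then bootstrap regularity through the explicit formula $w=\Y(\mu,w)$.

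\emph{Step 1: the strict lower and upper bounds.} Suppose $w(x_0)=0$ for some $x_0\in\oB$. Then the defining relation $w(x_0)=\Y(\mu,w)(x_0)=\psi(\G^+((1-w)\mu)(x_0),\G^-(w\mu)(x_0))$ forces $\psi(\cdot,\cdot)=0$ there, which (by the definition of $\psi$ in \cref{psi}) requires $\G^+((1-w)\mu)(x_0)=0$ while $\G^-(w\mu)(x_0)>0$. But $\G^+((1-w)\mu)(x_0)=\int_{\oB}G^+(x_0,y)(1-w(y))\,d\mu(y)$, and by \cref{Glb} the kernel satisfies $G^+(x_0,y)\geq\Cr{CGlb}>0$ on all of $\oB\times\oB$; since $1-w\geq0$ and $\mu\geq0$, vanishing of this integral forces $(1-w)\mu=0$, i.e. $w=1$ $\mu$-a.e. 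But then $\G^-(w\mu)(x_0)=\G^-(\mu)(x_0)\geq\Cr{CGlb}|\mu|>0$ is consistent, yet $w=1$ $\mu$-a.e. contradicts $w(x_0)=0$ only if $x_0$ is in the support of $\mu$ — so a little more care is needed. The cleaner route: combine the two cases. If $w(x_0)\in\{0,1\}$, then from $w(x_0)=\psi(a,b)$ with $a=\G^+((1-w)\mu)(x_0)$, $b=\G^-(w\mu)(x_0)$, and $a+b\geq\Cr{B3}|\mu|>0$ by \cref{lowerbnd}, we get $\psi(a,b)=a/(a+b)$, so $w(x_0)=0\Rightarrow a=0$ and $w(x_0)=1\Rightarrow b=0$. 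Now $a=0$ means $\int G^+(x_0,y)(1-w(y))\,d\mu(y)=0$, hence $w=1$ $\mu$-a.e.; plugging back, $\Y(\mu,w)(x)=\psi(0,\G^-(\mu)(x))=0$ for \emph{every} $x$, so $w\equiv0$ on $\oB$ — but $w=1$ $\mu$-a.e. and $|\mu|>0$ is a contradiction. Symmetrically, $b=0$ means $w=0$ $\mu$-a.e., which forces $\Y(\mu,w)(x)=\psi(\G^+(\mu)(x),0)=1$ for every $x$, so $w\equiv1$ on $\oB$ — again contradicting $w=0$ $\mu$-a.e. Hence $0<w<1$ pointwise on $\oB$; since $w\in C^2(\oB)$ (once Step 2 is in place) and $\oB$ is compact, the min and max are attained and strictly between $0$ and $1$.

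\emph{Step 2: regularity.} Once we know $0<w<1$ on $\oB$, we re-examine $w=\Y(\mu,w)=\psi(\G^+((1-w)\mu),\G^-(w\mu))$. By \cref{item3} of \cref{LemMG}, for any fixed Borel bounded $w$ the functions $\G^+((1-w)\mu)$ and $\G^-(w\mu)$ lie in $C^2(\oB)$, and by \cref{lowerbnd} their sum is bounded below by $\Cr{B3}|\mu|>0$, so the argument of $\psi$ stays in the region where $\psi\in C^\infty$ (\cref{Lempsi}). Therefore the right-hand side $\Y(\mu,w)$ is a $C^2$ function of $x$ — this is precisely \cref{item5} of \cref{LemMG}. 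Since $w$ equals this right-hand side at every point of $\oB$, we conclude $w\in C^2(\oB)$. (If one wants to be scrupulous about the logical dependence between Steps 1 and 2: Step 2's conclusion $w\in C^2(\oB)$ uses only that $w$ is Borel bounded and solves the equation pointwise — it does not use the strict bounds — so it can be carried out first; then Step 1 uses continuity of $w$ only to pass from the pointwise strict inequality $0<w<1$ to the attained strict bounds $0<\min\leq\max<1$.)

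\emph{Main obstacle.} The only delicate point is the dichotomy in Step 1: one must argue that $w$ cannot touch $0$ or $1$ even at a single point, and the mechanism is that the positivity of the kernels $G^\pm$ on all of $\oB\times\oB$ (guaranteed by $\rho<1/2$ via \cref{Glb}) is \emph{nonlocal} — a zero of $\G^+((1-w)\mu)$ at one point forces $(1-w)\mu\equiv0$, which then propagates to a contradiction everywhere. Making this propagation argument airtight — being careful that $w$ is only pinned down $\mu$-a.e. by the equation a priori, yet the conclusion $w\equiv0$ or $w\equiv1$ on all of $\oB$ follows by re-substituting into $\Y$ — is the crux. Everything else is a direct appeal to \cref{LemMG} and \cref{Lempsi}.
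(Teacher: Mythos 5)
Your proposal is correct and follows essentially the same route as the paper's proof: regularity $w=\Y(\mu,w)\in C^2(\oB)$ comes straight from \cref{LemMG}, and the strict bounds follow because a single zero of $\G^{+}((1-w)\mu)$ (resp. $\G^{-}(w\mu)$) forces $(1-w)\mu=0$ (resp. $w\mu=0$) by strict positivity of the kernels, which propagates to $w\equiv0$ (resp. $w\equiv1$) and contradicts $\mu\neq0$. The brief detour at the start of your Step 1 is superseded by your "cleaner route", which is exactly the paper's argument.
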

\begin{proof}
By \cref{LemMG}\cref{item5}, $w=\Y(\mu,w)\in C^2(\oB)$. In particular, $w$ is continuous, so the required inequalities are satisfied provided $w\in(0,1)$ in $\oB$. 
Since 
\begin{align*}
\{\psi=0\}=\{(a,b)\in\Rnonnegative\times\Rnonnegative:\quad a=0\},                                                                                         \end{align*}
we have
\begin{align}
 \{w=0\}=&\{\Y(\mu,w)=0\}\nonumber\\
 =&\{\G^{+}((1-w)\mu)=0\}.\label{Yzero}
\end{align}
Since $(1-w)\mu\in\MPB$ and kernel $G^+$ is  strictly positive in $\oB\times\oB$ (see \cref{RemG2}), \cref{Yzero} yields that either $\{w=0\}$ is empty or $w\equiv 0$ in $\oB$ and $(1-w)\mu=0$. The latter combination is  impossible since $\mu\neq0$, so $w$ is nowhere zero.                                                                                                                                                            
In the very same way one verifies that $w$ is nowhere equal one. 

\end{proof}

Next, we combine \cref{LemDD,LemMG} in order to obtain the following result {on Gateaux differentiability at every spatial point $x$}.
\begin{Lemma}\label[Lemma]{LemDerbnds}
Let  $\rho\in\left(0,{1}/{2}\right)$. Let 
 $\mu_0\in (\MPB)\backslash\{0\}$ and $w_0\in W^{1,\infty}(\Br;(0,1))$.
 Then, functional 
$(\mu,w)\mapsto\Y(\mu, w)(x)$
possesses Gateaux partial differentials of any order with respect to $\mu$ and $w$ at $(\mu_0,w_0)$ along the directions from ${\K_{\mu_0}}$ and $\WInfB$, respectively, for every $x\in\oB$. As functions of $x$, {these} Gateaux differentials belong to $C^2(\oB)$ and formulas \cref{DerYx} are satisfied together with inequalities
\begin{subequations}\label{estGatods}
 \begin{align}
 \|(\partial_w\Y(\mu_0, {w_0})(\cdot))h_1\|_{{\WInfB}}\leq&\Cr{CCro}\|h_1\|_{\LInfB},\label{estdirYw}\\
 \|(\partial_{\mu}\Y(\mu_0, {w_0})(\cdot)){k_{\mu_0}}\|_{{\WInfB}}\leq& 
  \Cr{CCro}|\mu_0|^{-1}\left(1+\|\nabla w_0\|_{\LInfB}\right)\|{k_{\mu_0}}\|_{\MKB},\label{estdirYmu}\\
\|(\partial_w(\partial_w\Y(\mu_0, w_0)(\cdot))h_1)h_2\|_{{\WInfB}}\leq&\Cr{CCro}\|h_1\|_{\LInfB}\|h_2\|_{\LInfB},\label{estD11x}\\
 \|(\partial_{\mu}(\partial_w\Y(\mu_0, w_0)(\cdot))h_1){k_{\mu_0}}\|_{{\WInfB}}\leq&\Cr{CCro}|\mu_0|^{-1}\left(1+\|\nabla w_0\|_{\LInfB}\right)\|h_1\|_{\WInfB}\|{k_{\mu_0}}\|_{\MKB}.\label{estD12x}
\end{align}
\end{subequations}

\end{Lemma}
\begin{proof} 
Since $w_0\in W^{1,\infty}(\Br;(0,1))\subset C(\oB;(0,1))$, we have 
$$0<\min_{\oB}w_0\leq\max_{\oB}w_0<1,$$ 
hence $\HH_{w_0}=\WInfB$. 
Further, \cref{LemMG}\cref{itemM1,itemM2} yield   
that $\mu_0\in{\mathcal M}_G^+(\oB)$, \cref{estmu} holds, and ${\K_{\mu_0}}\cap{\mathcal M}_G(\oB)={\K_{\mu_0}}$. 
These observations allow us to  apply  \cref{LemDD}, yielding existence of the  Gateaux partial differentials of any order for functional $(\mu, w)\to\Y(\mu, w)(x)$ and bounds  \cref{DerYx} along the directions from ${\K_{\mu_0}}\times\WInfB$ for all $x\in \oB$. Furthermore, the resulting differentials are two times continuously differentiable functions of $x$ due to the chain rule, the smoothness of $\psi$ away from $(0,0)$, and the regularity provided by \cref{LemMG}\cref{item3}.

It remains to establish  estimates \cref{estGatods}. 
First, using \cref{estnorm1,estnorm2} and the assumptions on $w_0,\mu_0$, and $h_i$, we find that 
\begin{subequations}\label{EstSep}
\begin{align}
\|\G^{+}((1-w_0)\mu_0)\|_{{\WInfB}}
\leq&\Cr{Cro}\|1-w_0\|_{\LInfB}|\mu_0|\nonumber\\
\leq&\Cr{Cro}|\mu_0|,\\
\|\G^{-}(w_0\mu_0)\|_{{\WInfB}}\leq&\Cr{Cro}\|w_0\|_{\LInfB}|\mu_0|\nonumber\\
 \leq&\Cr{Cro}|\mu_0|,\\
\|\G^{\pm}(h_i\mu_0)\|_{{\WInfB}}\leq&\Cr{Cro}\|h_i\|_{\LInfB}|\mu_0|,\qquad i\in\{1,2\},\\
\|\G^{+}((1-w_0){k_{\mu_0}})\|_{{\WInfB}}\leq&\Cr{CroG3}\|1-w_0\|_{\WInfB}\|{k_{\mu_0}}\|_{\MKB}\nonumber\\
\leq&\Cr{CroG3}\left(1+\|\nabla w_0\|_{\LInfB}\right)\|{k_{\mu_0}}\|_{\MKB},\\
\|\G^{-}(w_0{k_{\mu_0}})\|_{{\WInfB}}\leq&\Cr{CroG3}\|w_0\|_{\WInfB}\|{k_{\mu_0}}\|_{\MKB},\\
\|\G^{\pm}(h_1{k_{\mu_0}})\|_{{\WInfB}}\leq&\Cr{CroG3}\|h_1\|_{\WInfB}\|{k_{\mu_0}}\|_{\MKB}.
\end{align}
\end{subequations}
Combining \cref{estdirpsi,lowerbnd,EstSep} and using the product and chain rules where  necessary, we can estimate the directional derivatives {from \cref{DerYx}} in $\oB$ as follows: 
\begin{subequations}\label{ests}
\begin{align}
  &|(\partial_w\Y(\mu_0, {w_0})(\cdot))h_1|\nonumber\\
  \leq& 
  |D\psi(\G^{+}((1-{{w_0}})\mu_0),\G^{-}({{w_0}}\mu_0))||(-\G^{+}(h_1\mu_0),\G^{-}(h_1\mu_0))|\nonumber\\
  \leq& \Cr{CCro}\|h_1\|_{\LInfB},
  \end{align}
\begin{align}
  &|\nabla_x((\partial_w\Y(\mu_0, {w_0})(\cdot))h_1)|\nonumber\\
  \leq& 
  |D^2\psi(\G^{+}((1-{{w_0}})\mu_0),\G^{-}({{w_0}}\mu_0))|(|\nabla_x \G^{+}((1-w_0)\mu_0)|+|\nabla_x\G^{-}(w_0\mu_0)|)|(-\G^{+}(h_1\mu_0),\G^{-}(h_1\mu_0))|\nonumber\\
  &+|D\psi(\G^{+}((1-{{w_0}})\mu_0),\G^{-}({{w_0}}\mu_0))||(-\nabla_x\G^{+}(h_1\mu_0),\nabla_x\G^{-}(h_1\mu_0))|\nonumber\\
  \leq& \Cr{CCro}\|h_1\|_{\LInfB},
  \end{align}
\begin{align}
   &|\partial_{\mu}\Y(\mu_0, {w_0})(\cdot)){k_{\mu_0}}|\nonumber\\
   \leq& |D\psi(\G^{+}((1-{{w_0}})\mu_0),\G^{-}({{w_0}}\mu_0))||(\G^{+}((1-{{w_0}}){k_{\mu_0}}),\G^{-}({w_0}{k_{\mu_0}}))|\nonumber\\
   \leq& \Cr{CCro}|\mu_0|^{-1}\left(1+\|\nabla w_0\|_{\LInfB}\right)\|{k_{\mu_0}}\|_{\MKB},
  \end{align}
  \begin{align}
   &|\nabla_x(\partial_{\mu}\Y(\mu_0, {w_0})(\cdot)){k_{\mu_0}})|\nonumber\\
   \leq& |D^2\psi(\G^{+}((1-{{w_0}})\mu_0),\G^{-}({{w_0}}\mu_0))|(|\nabla_x \G^{+}((1-w_0)\mu_0)|+|\nabla_x\G^{-}(w_0\mu_0)|)\nonumber\\
   &\cdot|(\G^{+}((1-{{w_0}}){k_{\mu_0}}),\G^{-}({w_0}{k_{\mu_0}}))|\nonumber\\
   &+|D\psi(\G^{+}((1-{{w_0}})\mu_0),\G^{-}({{w_0}}\mu_0))||(\nabla_x\G^{+}((1-{{w_0}}){k_{\mu_0}}),\nabla_x\G^{-}({w_0}{k_{\mu_0}}))|\nonumber\\
   \leq& \Cr{CCro}|\mu_0|^{-1}\left(1+\|\nabla w_0\|_{\LInfB}\right)\|{k_{\mu_0}}\|_{\MKB},
  \end{align}
\begin{align}
  &|(\partial_w(\partial_w\Y(\mu_0, w_0)(\cdot))h_1)h_2|\nonumber\\
  \leq & |D^2\psi(\G^{+}((1-w_0)\mu_0),\G^{-}(w_0\mu_0))||(-\G^{+}(h_1\mu_0),\G^{-}(h_1\mu_0))||(-\G^{+}(h_2\mu_0),\G^{-}(h_2\mu_0))|\nonumber\\
  \leq&\Cl{CCro}\|h_1\|_{\LInfB}\|h_2\|_{\LInfB},
\end{align}
\begin{align}
  &|\nabla_x((\partial_w(\partial_w\Y(\mu_0, w_0)(\cdot))h_1)h_2)|\nonumber\\
  \leq&
  |D^3\psi(\G^{+}((1-w_0)\mu_0),\G^{-}(w_0\mu_0))|(|\nabla_x \G^{+}((1-w_0)\mu_0)|+|\nabla_x\G^{-}(w_0\mu_0)|)|(-\G^{+}(h_1\mu_0),\G^{-}(h_1\mu_0))|\nonumber\\
  &\cdot|(-\G^{+}(h_2\mu_0),\G^{-}(h_2\mu_0))|+|D^2\psi(\G^{+}((1-w_0)\mu_0),\G^{-}(w_0\mu_0))|\nonumber\\
  &\cdot(|(-\G^{+}(h_1\mu_0),\G^{-}(h_1\mu_0))||(-\nabla_x\G^{+}(h_2\mu_0),\nabla_x\G^{-}(h_2\mu_0))|\nonumber\\
  &\phantom{\cdot(}+|(-\nabla_x\G^{+}(h_1\mu_0),\nabla_x\G^{-}(h_1\mu_0))||(-\G^{+}(h_2\mu_0),\G^{-}(h_2\mu_0))|)\nonumber\\
  \leq&\Cr{CCro}\|h_1\|_{\LInfB}\|h_2\|_{\LInfB},
  \end{align}
  \begin{align}
  &|(\partial_{\mu}(\partial_w\Y(\mu_0, w_0)(\cdot))h_1){k_{\mu_0}}|\nonumber\\
  \leq& |D^2\psi(\G^{+}((1-w_0)\mu_0),\G^{-}(w_0\mu_0))||(-\G^{+}(h_1\mu_0),\G^{-}(h_1\mu_0))||(\G^{+}((1-w_0){k_{\mu_0}}),\G^{-}(w{k_{\mu_0}}))|\nonumber\\
  &+|D\psi(\G^{+}((1-w_0)\mu_0),\G^{-}(w_0\mu_0))||(-\G^{+}(h_1{k_{\mu_0}}),\G^{-}(h_1{k_{\mu_0}}))|\nonumber\\
  \leq&\Cr{CCro}|\mu_0|^{-1}\left(1+\|\nabla w_0\|_{\LInfB}\right)\|h_1\|_{\WInfB}\|{k_{\mu_0}}\|_{\MKB},
 \end{align}
and
\begin{align}
  &|\nabla_x((\partial_{\mu}(\partial_w\Y(\mu_0, w_0)(\cdot))h_1){k_{\mu_0}})|\nonumber\\
  \leq& 
  |D^3\psi(\G^{+}((1-w_0)\mu_0),\G^{-}(w_0\mu_0))|(|\nabla_x \G^{+}((1-w_0)\mu_0)|+|\nabla_x\G^{-}(w_0\mu_0)|)
  |(-\G^{+}(h_1\mu_0),\G^{-}(h_1\mu_0))|\nonumber\\
  &\cdot|\G^{+}((1-w_0){k_{\mu_0}}),\G^{-}(w{k_{\mu_0}})|+  
  |D^2\psi(\G^{+}((1-w_0)\mu_0),\G^{-}(w_0\mu_0))|\nonumber\\
  &\cdot(|(-\G^{+}(h_1\mu_0),\G^{-}(h_1\mu_0))||(\nabla_x\G^{+}((1-w_0){k_{\mu_0}}),\nabla_x\G^{-}(w{k_{\mu_0}}))|\nonumber\\
  &\phantom{\cdot(} +|(-\nabla_x\G^{+}(h_1\mu_0),\nabla_x\G^{-}(h_1\mu_0))||(\G^{+}((1-w_0){k_{\mu_0}}),\G^{-}(w{k_{\mu_0}}))|)\nonumber\\
  &+|D\psi(\G^{+}((1-w_0)\mu_0),\G^{-}(w_0\mu_0))||(-\nabla_x\G^{+}(h_1{k_{\mu_0}}),\nabla_x\G^{-}(h_1{k_{\mu_0}}))|
  \nonumber\\
  &+|D^2\psi(\G^{+}((1-w_0)\mu_0),\G^{-}(w_0\mu_0))|(|\nabla_x \G^{+}((1-w_0)\mu_0)|+|\nabla_x\G^{-}(w_0\mu_0)|)\nonumber\\
  &\cdot|(-\G^{+}(h_1{k_{\mu_0}}),\G^{-}(h_1{k_{\mu_0}}))|\nonumber\\
  \leq&\Cr{CCro}|\mu_0|^{-1}\left(1+\|\nabla w_0\|_{\LInfB}\right)\|h_1\|_{\WInfB}\|{k_{\mu_0}}\|_{\MKB}.
 \end{align}
 \end{subequations}
 Combining \cref{ests}, we arrive at \cref{estGatods}.

\end{proof}
The next Lemma deals with the Fr\'echet partial derivative of $\Y$ with respect to $w$. In particular, we establish an estimate (see \cref{estFre} below) that we use in \cref{SecExEq2} in order to prove our local existence result, \cref{Thm:wEqn-FP}.
\begin{Lemma}\label[Lemma]{Lem:contdwY}
Let  $\rho\in\left(0,{1}/{2}\right)$. {Then, operator $\Y$ is continuously partially Fr\'echet differentiable with respect to $w$ at each $(\mu_0,w_0)\in((\MPB)\backslash\{0\})\times W^{1,\infty}(\Br;(0,1))$. The Fr\'echet partial derivative is a compact operator  given by}
 \begin{align}
 \partial_w\Y(\mu_0,w_0)h=&(\partial_w\Y(\mu_0,w_0)(\cdot))h\qquad{\text{in }\oB}\qquad\text{for } h\in\WInfB\label{pwFre}
\end{align}
and satisfies the following estimate: for all 
\begin{align*}
&w_1,w_2\in W^{1,\infty}(\Br;(0,1)),\\
&\mu_1,\mu_2\in (\MPB)\backslash\{0\}\qquad \text{such that }|\mu_1|=|\mu_2|=|\mu_0|,\\
&h\in\WInfB                                                                                                          \end{align*}
it holds that
\begin{align}
&\left\|(\partial_w\Y(\mu_1,w_1)-\partial_w\Y(\mu_2,w_2))h\right\|_{{\WInfB}} \nonumber\\
\leq &\Cr{CCro}|\mu_0|^{-1} \left(1+\|\nabla w_1\|_{\LInfB}\right)\|h\|_{\WInfB} \|\mu_1-\mu_2\|_{\MKB} +  \Cr{CCro}\|h\|_{\LInfB} \|w_1-w_2\|_{\WInfB}.\label{estFre}
\end{align}
\end{Lemma}
\begin{proof}
\begin{Step}
 Let $h\in\WInfB$ be such that $\|h\|_{\LInfB}< \min\{\min w_0,1-\max w_0\}$. Thanks to \cref{LemDerbnds}, function $s\mapsto \Y(\mu_0,w_0+sh)(x)$ is two times continuously differentiable on $[0,1]$. 
 By Taylor's theorem for real functions, we then have
\begin{align}
&\Y(\mu_0,w_0+h)(x)-\Y(\mu_0,w_0)(x)-(\partial_w\Y(\mu_0,w_0)(x))h\nonumber\\
=& \Y(\mu_0,w_0+h)(x)-\Y(\mu_0,w_0)(x)-\frac{d}{ds}\Y(\mu_0,w_0+sh)(x)|_{s=0}\nonumber\\
=&\int_0^1(1-s)\frac{d^2}{ds^2}\Y(\mu_0,w_0+sh)(x)\,ds\nonumber\\
=&\int_0^1(1-s)(\partial_w(\partial_w\Y(\mu_0, w_0+sh)(x))h)h\,ds.\label{MV}
\end{align}
Using \cref{der3}, we compute the integrand on the right-hand side of \cref{MV}:
\begin{align}
 &(1-s)(\partial_w(\partial_w\Y(\mu_0, w_0+sh)(x))h)h\nonumber\\
  =& (1-s)D^2\psi(\G^{+}((1-{(w_0+sh)})\mu_0),\G^{-}((w_0+sh)\mu_0))(-\G^{+}(h\mu_0),\G^{-}(h\mu_0))^T\nonumber\\
  &\cdot (-\G^{+}(h\mu_0),\G^{-}(h\mu_0))^T(x).\label{integ1}
\end{align}
Exploiting the regularity of $\psi$ and the linearity and regularity of $\G^{\pm}$, we conclude that 
the right-hand side of \cref{integ1} and its spatial gradient are continuous functions of $x$ and $s$. Consequently, the spatial gradient of the right-hand side of \cref{MV} is the integral of the gradient of the integrand. Together with \cref{estD11x,MV} this implies 
\begin{align}
&\left\|\Y(\mu_0,w_0+h)-\Y(\mu_0,w_0)-(\partial_w\Y(\mu_0,w_0)(\cdot))h\right\|_{{\WInfB}}\nonumber\\
 =&\left\|\int_0^1(1-s)(\partial_w(\partial_w\Y(\mu_0, w_0+sh)(\cdot))h)h\,ds\right\|_{{\WInfB}}\nonumber\\\leq&\int_0^1(1-s)\left\|(\partial_w(\partial_w\Y(\mu_0, w_0+sh)(\cdot))h)h\right\|_{{\WInfB}}\,ds\nonumber\\
 \leq&\int_0^1(1-s)\Cr{CCro}\|h\|_{\LInfB}^2\,ds\nonumber\\
 \leq&\frac{1}{2}\Cr{CCro}\|h\|_{\LInfB}^2\nonumber\\
 =&o\left(\|h\|_{\WInfB}\right).\label{Fre1}
\end{align}
From \cref{der1} it is clear that $h\mapsto(\partial_w\Y(\mu_0,w_0)(\cdot))h$ is a linear operator. It is also continuous by \cref{estdirYw}. {By \cref{Fre1},} this operator is then the Fr\'echet derivative of $(\mu_0,w_0)$ with respect to $w$ in $\WInfB$. 

Furthermore, formula \cref{der1} shows that $\partial_w\Y(\mu_0,w_0)$ is a superposition of multiplication by a smooth function and an integral operator with a smooth kernel. As in several instances above, this is due to smoothness of $G^\pm$ and $\psi$ by \cref{RemG2,Lempsi}, respectively. Therefore, it is a superposition of bounded and  compact operators, and as such, it is compact.
\end{Step}
\begin{Step}
{Finally, we derive estimate \cref{estFre}.}
Using smoothness in the Gateaux sense {at every point $x\in \oB$ as} provided by \cref{LemDerbnds} and {once again} Taylor's theorem {for real functions}, we compute for every $x\in \oB$ that
\begin{align}
 &(\partial_w\Y(\mu_1,w_1){(x)}-\partial_w\Y(\mu_2,w_2){(x)})h\nonumber\\
=&(\partial_w\Y(\mu_1,w_1){(x)}-\partial_w\Y(\mu_2,w_1){(x)})h+(\partial_w\Y(\mu_2,w_1){(x)}-\partial_w\Y(\mu_2,w_2){(x)})h\nonumber\\
 =&\int_0^1(\partial_{\mu}(\partial_w\Y(\mu(s), w_1){(x)})h){k_{\mu}}\,ds+ \int_0^1(\partial_w(\partial_w\Y(\mu_2, w(s)){(x)})h)h_{w}\,ds, \label{pwY-split}
\end{align}
where 
\begin{alignat*}{3}
&h_{w} = w_1 - w_2,&& \qquad {k_{\mu}} = \mu_1 - \mu_2, \\
&w(s)= (1-s)w_1+sw_2, && \qquad \mu(s)=(1-s)\mu_1+s\mu_2.
\end{alignat*}
Combining \cref{pwY-split} with \cref{estD11x,estD12x}, we can estimate as follows:
\begin{align*}
&\left\|(\partial_w\Y(\mu_1,w_1)(\cdot)-\partial_w\Y(\mu_2,w_2)(\cdot))h\right\|_{{\WInfB}}\\
\leq & \left\| \int_0^1(\partial_{\mu}(\partial_w\Y(\mu(s), w_1)(\cdot))h){k_{\mu}}\,ds\right\|_{{\WInfB}} +  \left\|\int_0^1(\partial_w(\partial_w\Y(\mu_2, w(s))(\cdot))h)h_{w}\,ds \right\|_{{\WInfB}}\\
\leq &\int_0^1 \Cr{CCro}|\mu(s)|^{-1}\left(1+\|\nabla w_1\|_{\LInfB}\right)\|h\|_{\WInfB}\|{k_{\mu}}\|_{\MKB}{\, ds} + \Cr{CCro}\|h\|_{\LInfB}\|h_{w}\|_{\LInfB} \\
=& \Cr{CCro}|\mu_0|^{-1}\left(1+\|\nabla w_1\|_{\LInfB}\right)\|h\|_{\WInfB} \|{k_{\mu}}\|_{\MKB} + \Cr{CCro}\|h\|_{\LInfB}\|h_{w}\|_{\LInfB},
\end{align*}
 {so \cref{estFre} holds. It remains to observe that this estimate directly implies continuity of the Fr\'echet derivative $\partial_w\Y$.} 
 \end{Step}
\end{proof}
We conclude this Subsection with a result for the case of a spatial domain extension.
\begin{Lemma}%
 Let  $0<\rho_0<\rho<{1}/{2}$. Let $(\mu_0,w_0)$ be such that 
 \begin{align}
  &\mu_0\in ({\mathcal M}^+(\oBo))\backslash\{0\},\nonumber\\
  &w_0\in W^{1,\infty}(B_{\rho_0};(0,1)),\nonumber\\
  &(\mu_0,w_0)\text{ satisfies  \cref{LambdaEqmu}  pointwise in }\oBo.\label{eqLam2}
 \end{align}
Define extensions
\begin{subequations}
\begin{align}
  \mu_0:=&0\qquad\text{in }\oB\backslash\oBo,\label{extmu0}\\
\overline w_0:=&\begin{cases}
            w_0&\text{in }\oBo,\\
            0&\text{in }\oB\backslash \oBo,
           \end{cases}\label{extm01}\\
           w_0:=&\Y_{\rho}(\mu_0,\overline{w}_0)\qquad\text{in }\oB\backslash \oBo\label{w0extB}
 \end{align}
 \end{subequations}
 and operators
 \begin{align*}
  \X_r:=&id-\partial_w\Y_r(\mu_0,w_0)\qquad \text{in }W^{1,\infty}(B_r)\qquad \text{for }r\in\{\rho,\rho_0\},
 \end{align*}
 where $\Y_r$ denotes the realisation of $\Y$ corresponding to the spatial domain  $\overline{B_r}$.
Then, $w_0\in C^2(\oB)$ and equation \cref{LambdaEqmu} is satisfied pointwise in $\oB$. 
Furthermore, $\X_{\rho}$ is invertible if and only if $\X_{\rho_0}$ is invertible.
\begin{proof}
 Since $\rho<1/2$, $\Y(\mu_0,\overline{w}_0)\in C^2(\oB)$ by \cref{LemMG}\cref{item5}.  With \cref{w0extB,eqLam2,extm01} we deduce that \cref{LambdaEqmu} is satisfied in $\oB$. 
 
 It remains to verify the equivalence of invertibility in  $\WInfB$ and $\WInfBo$. 
 By \cref{Lem:contdwY},  $\X_{\rho_0}$ and $\X_{\rho}$ are Fredholm. Hence, it suffices to check 
\begin{align}
 \Ker_{\WInfB}(\X_{\rho})\neq\{0\}\qquad\Leftrightarrow\qquad  \Ker_{\WInfBo}(\X_{\rho_0})\neq\{0\}.\label{eqKer}
\end{align}
Combining \cref{der1,pwFre,extmu0}, we obtain for all $h,h_1,h_2\in\WInfB$ with $h_1= h_2$ in $\oBo$ that 
\begin{subequations}
\begin{alignat}{3}
 \partial_w\Y_{\rho}(\mu_0,w_0)h=&\partial_w\Y_{\rho_0}(\mu_0,w_0) h&&\qquad\text{in }\oBo,\label{Ext0}\\
 \partial_w\Y_{\rho}(\mu_0,w_0)h_1=&\partial_w\Y_{\rho}(\mu_0,w_0) h_2&&\qquad\text{in }\oB.\label{Ext12}
\end{alignat}
\end{subequations}
With \cref{Ext0} it follows directly that if $h\in \Ker_{\WInfB}(\X_{\rho})$, then $h\in \Ker_{\WInfBo}(\X_{\rho_0})$ as well.
Conversely, let
\begin{align}
h_0\in \Ker_{\WInfBo}(\X_{\rho_0}).\label{inKer}
\end{align}
Choose $\overline h_0\in\WInfB$ such that it coincides with $h_0$ on $\oBo$ and set
\begin{align}
\widetilde h_0:=&\partial_w\Y_{\rho}(\mu_0,w_0)\overline h_0\qquad\text{in }\oB.\label{h2}
\end{align}
Together, \cref{Ext0,h2,inKer} imply 
\begin{alignat}{3}
\widetilde h_0=&\partial_w\Y_{\rho}(\mu_0,w_0)\overline h_0&&\nonumber\\
=&\partial_w\Y_{\rho_0}(\mu_0,w_0) h_0&&\nonumber\\
=&h_0&&\nonumber\\
=&\overline h_0
&&\qquad\text{in }\oBo.\label{h3}
\end{alignat}
Finally, \cref{h3,Ext12,h2} yield
\begin{alignat}{3}
 \widetilde h_0=&\partial_w\Y_{\rho}(\mu_0,w_0)\overline h_0&&\nonumber\\
 =&\partial_w\Y_{\rho}(\mu_0,w_0)\widetilde h_0&&\qquad\text{in }\oB,\nonumber
\end{alignat}
meaning that $\widetilde h_0\in \Ker_{\WInfB}(\X_{\rho})$.
Thus, we have proved that \cref{eqKer} holds.
\end{proof}

\end{Lemma}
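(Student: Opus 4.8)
The approach is to treat the two assertions in turn, the whole argument hinging on one structural fact: after the extension \cref{extmu0} the measure $\mu_0$ is supported in $\oBo$, so for any Borel $f$ the function $\G^{\pm}(f\mu_0)$, and therefore $\Y_\rho(\mu_0,\cdot)$, $\partial_w\Y_\rho(\mu_0,w_0)$ and their $\rho_0$-realisations, depend only on $f|_{\oBo}$. This is exactly what ties the two domains together.

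\emph{Step 1 (regularity and the pointwise identity).} Since $\mu_0\neq 0$ and $\rho<1/2$, \cref{LemMG}\cref{item5} gives $\Y_\rho(\mu_0,\overline w_0)\in C^2(\oB;[0,1])$; and because $\mu_0$ is supported in $\oBo$, where $\overline w_0=w_0$ by \cref{extm01}, one has $\Y_\rho(\mu_0,\overline w_0)=\Y_\rho(\mu_0,w_0)$ on all of $\oB$. On $\oBo$ the integral defining $\Y_\rho(\mu_0,w_0)$ is the same as the one defining $\Y_{\rho_0}(\mu_0,w_0)$, so \cref{eqLam2} yields $w_0=\Y_\rho(\mu_0,w_0)$ there; on $\oB\backslash\oBo$ this identity is just the definition \cref{w0extB}. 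Hence $w_0=\Y_\rho(\mu_0,w_0)$ pointwise in $\oB$, and $w_0\in C^2(\oB)$ follows from the regularity of the right-hand side (with \cref{Lemma:reg_w} one also recovers $0<w_0<1$).

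\emph{Step 2 (equivalence of invertibility).} By \cref{Lem:contdwY} the operators $\partial_w\Y_r(\mu_0,w_0)$, $r\in\{\rho,\rho_0\}$, are compact, so $\X_\rho=id-\partial_w\Y_\rho(\mu_0,w_0)$ and $\X_{\rho_0}$ are Fredholm of index zero; each is therefore invertible precisely when its kernel is trivial, and it suffices to prove \cref{eqKer}. From formula \cref{der1} and the support of $\mu_0$ I would first record two identities: $\partial_w\Y_\rho(\mu_0,w_0)h=\partial_w\Y_{\rho_0}(\mu_0,w_0)(h|_{\oBo})$ on $\oBo$ for $h\in\WInfB$, and $\partial_w\Y_\rho(\mu_0,w_0)h_1=\partial_w\Y_\rho(\mu_0,w_0)h_2$ on $\oB$ whenever $h_1=h_2$ on $\oBo$. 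Then, for the forward implication: if $0\neq h\in\Ker_{\WInfB}(\X_\rho)$, the first identity shows $h|_{\oBo}\in\Ker_{\WInfBo}(\X_{\rho_0})$, and $h|_{\oBo}\neq 0$, since otherwise the second identity forces $h=\partial_w\Y_\rho(\mu_0,w_0)h=0$. For the converse: given $0\neq h_0\in\Ker_{\WInfBo}(\X_{\rho_0})$, extend it to any $\overline h_0\in\WInfB$ agreeing with $h_0$ on $\oBo$, set $\widetilde h_0:=\partial_w\Y_\rho(\mu_0,w_0)\overline h_0\in C^2(\oB)\subset\WInfB$, use the first identity to get $\widetilde h_0=\partial_w\Y_{\rho_0}(\mu_0,w_0)h_0=h_0=\overline h_0$ on $\oBo$, and then the second identity to conclude $\partial_w\Y_\rho(\mu_0,w_0)\widetilde h_0=\partial_w\Y_\rho(\mu_0,w_0)\overline h_0=\widetilde h_0$, i.e. $\widetilde h_0\in\Ker_{\WInfB}(\X_\rho)$, with $\widetilde h_0\neq 0$ because it equals $h_0$ on $\oBo$.

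The genuinely delicate point is the converse in Step 2: one cannot simply extend $h_0$ and declare it a kernel element of $\X_\rho$ — the extension must first be ``corrected'' to $\widetilde h_0$ so that it becomes an actual fixed point of $\partial_w\Y_\rho(\mu_0,w_0)$, and verifying this relies on combining the two structural identities in precisely the right order. The regularity statement, the Fredholm reduction, and the forward implication are all routine once the support property of the extended $\mu_0$ is in hand.
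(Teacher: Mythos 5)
Your proposal is correct and follows essentially the same route as the paper's proof: the regularity/extension argument via \cref{LemMG}, the Fredholm reduction via the compactness in \cref{Lem:contdwY}, the two structural identities coming from the support of the extended $\mu_0$, and the correction $\widetilde h_0:=\partial_w\Y_{\rho}(\mu_0,w_0)\overline h_0$ in the converse direction. Your explicit check that the restricted (resp.\ corrected) kernel elements are nonzero is a small detail the paper leaves implicit, but it does not change the argument.
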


\subsection{Local well-posedness of  \texorpdfstring{\cref{LambdaEqmu}}{} for \texorpdfstring{$\Om$}{} a small ball (proof of \texorpdfstring{\cref{Thm:wEqn-FP}}{})}\label{SecExEq2}
In this Subsection we use the findings in \cref{SecYball,SecY} to prove \cref{Thm:wEqn-FP} on local well-posedness of \cref{LambdaEqmu}.
Our proof is based on an argument   similar to the one used in the proof of the implicit mapping theorem, see the discussion in \cref{SecCh}\cref{ChI5}.
\begin{proof}[Proof of \cref{Thm:wEqn-FP}]
Define constants
\begin{subequations}\label{IMT:constants}
 \begin{align}
 \Cl{Cw0}:=&\frac{1}{2}\min\left\{\min_{\oB}w_0,1-\max_{\oB}w_0\right\},\label{Cminmax}\\
\Cl{CC6}:=& \left\|\X^{-1}\right\|_{{L(\WInfB)}}\Cr{CCro},\\
\Cr{R1}:=&\min\left\{\left(1+2\Cr{CC6}\right)^{-1},\Cr{Cw0}\right\},\label{R}\\
\Cr{R2}  :=& \frac{1}{2}\Cr{CC6}^{-1}\Cr{R1}|\mu_0|\left(1+\|\nabla w_0\|_{\LInfB}\right)^{-1},\label{Rtil}\\
\Cr{F-contin-mu} :=&  \frac12\Cr{CC6}|\mu_0|^{-1}\left(1+\|\nabla w_0\|_{\LInfB}+\Cr{R1}\right).
\end{align}   
\end{subequations}
Consider operator
\begin{align*}
 \Cl[WW]{OpWu}(\mu,w):=w-\X^{-1}(w-\Y(\mu,w)).
\end{align*}
Obviously, solutions of \cref{LambdaEqmu} are exactly  
the solutions of 
\begin{align}
 w=\Cr{OpWu}(\mu,w).\label{EqW1}
\end{align}
In order to solve \cref{EqW1}, we prove that $\Cr{OpWu}$ is a contraction in its second variable and subsequently use Banach's fixed point theorem.
\begin{Step}
To begin with, we establish the following estimates for all $\mu,\mu_1,\mu_2\in \Cr{SU2}$ and $w,w_1,w_2\in \Cr{W3}$:
\begin{subequations}
\begin{align}
 \|\Cr{OpWu}(\mu,w_2)-\Cr{OpWu}(\mu,w_1)\|_{\WInfB}\leq & \frac12\|w_1-w_2\|_{\WInfB},\label{Fcontr}\\
 \|\Cr{OpWu}(\mu_1,w)-\Cr{OpWu}(\mu_2,w)\|_{\WInfB} \leq & \frac12\Cr{F-contin-mu} \|\mu_1-\mu_2\|_{\MKB}\label{Fconti_mu1},\\
\|\Cr{OpWu}(\mu,w)-w_0\|_{\WInfB}\leq & \Cr{R1}.\label{Fconta}
\end{align}
\end{subequations}
Using the generalised Taylor's theorem \cite[Chapter 4, \S4.6, Theorem 4.A]{ZeidlerFP} and chain rule \cite[Chapter 4, \S4.3, Proposition 4.10]{ZeidlerFP}, \cref{Lem:contdwY}, and the definition of $\Cr{OpWu}$, we compute
\begin{subequations}
\begin{align}
 \Cr{OpWu}(\mu,w_2)-\Cr{OpWu}(\mu,w_1)=&\int_0^1\partial_w\Cr{OpWu}(\mu,w(s_1))h_1\,ds_1\nonumber\\
=&\int_0^1\X^{-1}(\partial_w\Y(\mu,w(s_1))-\partial_w\Y(\mu_0,w_0))h_1\,ds_1\qquad \text{in }\WInfB, \label{F_eqn:1}\\
 \Cr{OpWu}(\mu_1,w)-\Cr{OpWu}(\mu_2,w)=&\int_0^1\partial_{\mu}\Cr{OpWu}(\mu(s_1),w){k_{\mu}}\,ds_1\nonumber\\
 =& \int_0^1 \X^{-1}\partial_{\mu}\Y(\mu(s_1),w){k_{\mu}}\,ds_1 \qquad \text{in }\WInfB, \label{F_eqn:2}
\end{align}    
\end{subequations}
where 
\begin{subequations}\label{hw1}
\begin{alignat}{3}
  &h_1:=w_2-w_1,\qquad  &&{k_{\mu}}:= \mu_1-\mu_2,\\
  &w(s_1):=(1-s_1)w_1+s_1w_2, \qquad &&\mu(s_1):=(1-s_1)\mu_1+\mu_2.
\end{alignat}
\end{subequations}
Combining \cref{estFre,hw1,IMT:constants,setsWM}, we can estimate the right-hand side of \cref{F_eqn:1} using, in particular, convexity of $\Cr{W3}$ and $\Cr{SU2}$, and obtain
\begin{align*}
& \|\Cr{OpWu}(\mu,w_2)-\Cr{OpWu}(\mu,w_1)\|_{\WInfB}\\
\leq & \int_0^1 \left\|\X^{-1}\right\|_{{L(\WInfB)}}\|(\partial_w\Y(\mu,w(s_1))-\partial_w\Y(\mu_0,w_0))h_1\|_{\WInfB}\, ds_1\\
\leq&\Cr{CC6}|\mu_0|^{-1} \left(1+\|\nabla w_0\|_{\LInfB}\right)\|w_1-w_2\|_{\WInfB}\|\mu-\mu_0\|_{\MKB}\\
& +\Cr{CC6} \|w_1-w_2\|_{{\LInfB}}\int_0^1\|w(s_1)-w_0\|_{\WInfB}\, ds_1\\
\leq&\Cr{CC6}\left(|\mu_0|^{-1}\left(1+\|\nabla w_0\|_{\LInfB}\right)\Cr{R2}+\Cr{R1}\right)\|w_1-w_2\|_{\WInfB}\\
\leq&\frac12\|w_1-w_2\|_{\WInfB},
\end{align*}
so \cref{Fcontr} holds. In a similar fashion, this time relying on \cref{estdirYw,pwFre} rather than \cref{estFre}, we  estimate the right-hand side of \cref{F_eqn:2}, concluding that
\begin{align}
 &\|\Cr{OpWu}(\mu_1,w)-\Cr{OpWu}(\mu_2,w)\|_{\WInfB} \nonumber\\
 \leq & \left\|\X^{-1}\right\|_{{L(\WInfB)}} \int_0^1\|\partial_{\mu}\Y(\mu(s_1),w){k_{\mu}}\|_{\WInfB}\, ds_1\nonumber\\
 \leq & \Cr{CC6} \int_0^1|\mu(s_1)|^{-1}\,ds_1\left(1+\|\nabla w\|_{\LInfB}\right)\|{k_{\mu}}\|_{\MKB}\nonumber\\
= & \Cr{CC6} |\mu_0|^{-1} \left(1+\|\nabla w\|_{\LInfB}\right)\|\mu_1-\mu_2\|_{\MKB}\label{estax1}\\
\leq & \Cr{CC6} |\mu_0|^{-1} \left(1+\|\nabla w_0\|_{\LInfB}+\Cr{R1}\right)\|\mu_1-\mu_2\|_{\MKB}\nonumber\\
=&\frac12\Cr{F-contin-mu} \|\mu_1-\mu_2\|_{\MKB},\nonumber
\end{align}
so \cref{Fconti_mu1} holds too.
Finally, we obtain \cref{Fconta} using \cref{Fcontr,estax1,Rtil} and the assumption that $(\mu_0,w_0)$ solves \cref{LambdaEqmu}:
\begin{align*}
&\|\Cr{OpWu}(\mu,w)-w_0\|_{\WInfB}\\
=&\|\Cr{OpWu}(\mu,w)-\Cr{OpWu}(\mu_0,w_0)\|_{\WInfB}\nonumber\\
\leq&\|\Cr{OpWu}(\mu,w)-\Cr{OpWu}(\mu,w_0)\|_{\WInfB}+\|\Cr{OpWu}(\mu,w_0)-\Cr{OpWu}(\mu_0,w_0)\|_{\WInfB}\nonumber\\
\leq&\frac12\|w-w_0\|_{\WInfB}+ \Cr{CC6} |\mu_0|^{-1} \left(1+\|\nabla w_0\|_{\LInfB}\right) \|\mu-\mu_0\|_{\MKB}\\
\leq&\frac12\Cr{R1}+\Cr{CC6} |\mu_0|^{-1} \left(1+\|\nabla w_0\|_{\LInfB}\right)\Cr{R2} \\
 =&\Cr{R1}.
\end{align*}
\end{Step}
\begin{Step}
Let $\mu\in \Cr{SU2}$. By \cref{LemMG}\cref{item5}, $\Y$ and, hence, $\Cr{OpWu}(\mu,\cdot)$, are well-defined in $\Cr{W3}$. Furthermore, due to \cref{Fconta,Cminmax,R}, $\Cr{OpWu}(\mu,\cdot)$ is a self-map and, due to \cref{Fcontr}, a contraction in this closed ball  of $\WInfB$. Applying Banach's fixed point theorem, we infer the existence of a unique fixed point $w$. As mentioned at the beginning of the proof, $(\mu,w)$ is then a solution to \cref{LambdaEqmu}. Invoking \cref{Lemma:reg_w}, we find that $w\in C^2(\oB;(0,1))$. 
Altogether, we conclude that the solution map 
\begin{align}
 \Cr{OpW1}: \Cr{SU2} \to \Cr{W3}\cap C^2(\oB;(0,1)),\qquad \Cr{OpW1}(\mu):=w\nonumber
\end{align}
is well-defined.

It remains to verify \cref{W-stability}. Consider $\mu_1,\mu_2\in \Cr{SU2}$ and $w_1=\Cr{OpW1}(\mu_1)$ and $w_2=\Cr{OpW1}(\mu_2)$. Combining \cref{Fcontr,Fconti_mu1}, we obtain

\begin{align*}
&\|\Cr{OpW1}(\mu_1)-\Cr{OpW1}(\mu_2)\|_{\WInfB}\nonumber\\
=& \|\Cr{OpWu}(\mu_1,w_1)-\Cr{OpWu}(\mu_2,w_2)\|_{\WInfB} \\
\leq &\|\Cr{OpWu}(\mu_1,w_1)-\Cr{OpWu}(\mu_1,w_2)\|_{\WInfB}  + \|\Cr{OpWu}(\mu_1,w_2)-\Cr{OpWu}(\mu_2,w_2)\|_{\WInfB} \\
\leq &\frac12 \|w_1-w_2\|_{\WInfB}  + \frac12\Cr{F-contin-mu}\|\mu_1-\mu_2\|_{\MKB}.
\end{align*}
This gives
\begin{align*}
\|\Cr{OpW1}(\mu_1)-\Cr{OpW1}(\mu_2)\|_{\WInfB}\leq& \Cr{F-contin-mu} \|\mu_1-\mu_2\|_{\MKB},
\end{align*}
so estimate \cref{W-stability} holds.
\end{Step}
\end{proof}
We conclude this Subsection  with a proof of \cref{CompThmLambdaEq}.
\begin{proof}[Proof of \cref{CompThmLambdaEq}]
Since $\nu\leq\mu$, the Radon-Nikodym theorem provides the existence of a Radon-Nykodym derivative $\frac{d\nu}{d\mu}$,
whereas the Lebesgue differentiation theorem implies that $w$ takes values in $[0,1]$, $\mu$-a.e. Since $(\mu,\nu)$ solves \cref{M_LambdaEq}, $(\mu,\frac{d\nu}{d\mu})$ then solves \cref{LambdaEq} by definition.
 
\end{proof}

\subsection{Solvability of \texorpdfstring{\cref{LambdaEqmu}}{} for \texorpdfstring{$\mu_0$}{} a point mass}
Let us consider a special case when $\mu_0$ is a point mass. As seen below, in this situation, equation \cref{LambdaEqmu} is explicitly solvable and, moreover, the solution pair $(\mu_0,w_0)$ satisfies \cref{Assmu0w0}.
\begin{Example}[$\mu_0$ point mass]\label{Exdelta}
 Let $\rho\in(0,1)$, $m>0$, 
 \begin{align}
    \mu_0:=m\delta_0,\nonumber                                                              \end{align} 
 and $w_0:\R^d\to[0,1]$ be Borel. Using 
 \begin{align*}
\supp(\mu_0)=\{0\}                                        \end{align*}
 and the linearity of $\G^{\pm}$, we compute
 \begin{align}
  \Y(\mu_0,w_0)=&\frac{(1-{w_0}{(0)})\G^{+}\delta_0}{(1-{w_0}{(0)})\G^{+}\delta_0+{w_0}{(0)}\G^{-}\delta_0}\qquad\text{in }\oB,\label{eqRHS}
 \end{align}
 where
 \begin{align}
  \G^\pm\delta_0=G^\pm(\cdot,0)>0.\nonumber%
 \end{align}
 due to assumptions \cref{DefG2,AssPhi,Kpm,aabb}  and $\rho<1$.
In particular, 
\begin{align}
 \Y(\mu_0,w_0){(0)}=&\frac{(1-{w_0}{(0)})\gamma^+}{(1-{w_0}{(0)})\gamma^++{w_0}{(0)}\gamma^-},\nonumber
\end{align}
where 
\begin{align}
  \gamma^\pm:=&G^{\pm}(0,0)>0.\nonumber
 \end{align} 
 Thus, $w_0$ is a solution to \cref{LambdaEq} if and only if 
 \begin{align}
  {w_0}{(0)}=&\frac{(1-{w_0}{(0)})\gamma^+}{(1-{w_0}{(0)})\gamma^++{w_0}{(0)}\gamma^-}.\label{eqwst2}
 \end{align}
 Since $0\leq {w_0}{(0)}\leq 1$, the only solution that  \cref{eqwst2} has is  
 \begin{align}
 w_0(0)=\frac{\sqrt{\gamma^+}}{\sqrt{\gamma^+}+\sqrt{\gamma^-}}\in(0,1).\label{solw}
 \end{align}
Combining \cref{solw,eqRHS}, we obtain the unique solution to \cref{LambdaEqmu} in the form 
\begin{align}
 {w_0}=&\frac{\sqrt{\gamma^-}G^+}{\sqrt{\gamma^-}G^++\sqrt{\gamma^+}G^-}(\cdot,0)\qquad\text{in }\oB.\nonumber
\end{align}
 Let $x\in \oB$. Using formula \cref{der1}, we compute
\begin{align}
 \partial_w(\Y(\mu_0,w_0)(x))h_1
 =&\frac{1}{m}\xi(x)h_1{(0)},
\label{der5}
\end{align}
where%
\begin{align}
 \xi:=&D\psi((1-w_0(0))G^+(\cdot,0),w_0(0)G^-(\cdot,0))\cdot(-G^+(\cdot,0),G^-(\cdot,0))\nonumber\\
 \in&W^{1,\infty}({\Br}).\nonumber
\end{align}
In particular,
\begin{align}
 \xi{(0)}=&D\psi((1-w_0(0))\gamma^+,w_0(0)\gamma^-)\cdot(-\gamma^+,\gamma^-)\nonumber\\
 =&D\psi\left(\frac{\sqrt{\gamma^-}}{\sqrt{\gamma^+}+\sqrt{\gamma^-}}\gamma^+,\frac{\sqrt{\gamma^+}}{\sqrt{\gamma^+}+\sqrt{\gamma^-}}\gamma^-\right)\cdot(-\gamma^+,\gamma^-)\nonumber\\
 =& -1.\label{xione}
\end{align}
Combining \cref{xione,der5,pwFre}, we conclude that operator
\begin{align*}
 \X h_1=&h_1-\partial_w\Y(\delta_0,w_0)h_1\\
 =&h_1-\frac{1}{m}\xi h_1{(0)}
\end{align*}
is invertible in $W^{1,\infty}({\Br})$, its inverse being
\begin{align*}
 \X^{(-1)}g=g+\frac{1}{m+1}\xi g{(0)}.
\end{align*}
Thus, $(\mu_0,w_0)$ satisfies all  conditions in  \cref{Assmu0w0}.
\end{Example}

\section{Well-posedness of a PDE with PM diffusion and fixed advection direction}\label{Sec:FixedAdv}
In this Section 
we consider the Cauchy problem for a PDE with the PM diffusion \cref{PM} and a fixed advection direction $V$:
\begin{subequations}\label{PMEdriftCauchy}
\begin{alignat}{3}
 &\partial_tu=\Delta u^2-\nabla\cdot\left(Vu\chi(u)\right)&&\quad\text{in }(0,T]\times\R^d,\label{PMEdrift}\\
 &u(0,\cdot)=u_0&&\quad\text{in }\R^d.\label{u02}
\end{alignat}
\end{subequations}
Later in \cref{ExPLnew} we take
\begin{align}
 V=w\left(\nabla H\star u\right),\nonumber%
\end{align}
which turns \cref{PMEdrift}  into  \cref{PLnew}. 

For $V\equiv0$, \cref{PMEdrift} is a special case of the standard porous media equation that was covered in depth in   \cite{Vazquez}. Many works also addressed \cref{PMEdrift} for either constant $V$ or $\chi\equiv1$, see e.g. references in the survey  \cite{Kalashnikov}. More recent studies have concentrated on the case $\chi\equiv1$ and $V$ that belongs to a possibly  anisotropic $L^{p,q}$-space.  We refer the interested reader to the detailed explorations in   \cite{HwangKangKim1,HwangKangKim2} as well as references therein. 

In our setting, both $\chi$ and $V$ are non-constant. Since we were unable to locate the necessary facts specifically for this particular scenario and moreover, need to account for dependencies on the parameters in the  estimates for its solutions for the purpose of our analysis 
in
\cref{SecMainProof,ExPLnew}, we provide all required results. For the reader's convenience, we include proofs in most cases even though they follow standard patterns.

The rest of the Section is decomposed into two parts. We start with developing well-posedness in \cref{SecWPPME}. In \cref{Subsection:support}, we deal with the situation when the initial value $u_0$ is compactly supported. We verify that the support of the corresponding solution cannot explode for small times. 
\subsection{Well-posedness of \texorpdfstring{\cref{PMEdriftCauchy}}{}}\label{SecWPPME}
In this Subsection we aim at  establishing  well-posedness of \cref{PMEdriftCauchy}. For $V$ and $u_0$ we consider the following set of assumptions: for some $T>0$
\begin{subequations}\label{AssV4}
\begin{align} 
 V&\in \left(L^{\infty}(E_T)\right)^d,\label{AssV1}\\
 V&\in L^{\infty,1}(E_T),\label{AssV11}\\
 \nabla \cdot V&\in L^{\infty}(E_T),\label{AssDV1}\\
  \nabla \cdot V&\in  L^{\infty,1}(E_T);\label{AssDV11}
\end{align}
\end{subequations}
and
\begin{subequations}\label{Assumpu01}
\begin{align}
 0\leq&u_0\in \LInfn\cap\LTwon,\label{Assumpu0}\\
 &u_0\in \LOnen. 
\end{align}
\end{subequations}
Not all of them are required for each result.

We introduce the following notion of {(sub/super-)} solution to \cref{PMEdriftCauchy}.
\begin{Definition}[(Sub/super-)solutions to \cref{PMEdriftCauchy}]\label[Definition]{DefPMEDrift}
Let $T>0$ and \cref{Asschi,AssV1,Assumpu0} be satisfied.
We call a function $u:[0,T]\times\R^d\to[0,\infty)$  a solution to \cref{PMEdriftCauchy} in $[0,T]\times\R^d$ if:
\begin{enumerate}
\item $u\in L^{\infty}\left(0,T;\LInfn\cap\LTwon\right)$;
\item%
  $u^{\frac{3}{2}}\in L^2\left(0,T;\HOnen\right)$;
 \item%
  $\partial_tu\in L^2\left(0,T;\HmOnen\right)$, $\partial_tu \in L^{\infty}\left(0,T;\left(W^{2,p}(\R^d)\right)'\right)$ for all $p\in[1,\infty]$;
 \item $u$ satisfies \cref{PMEdrift} in  a weak, 
 \begin{subequations}
 \begin{align}
  \left<\partial_tu,\varphi\right>=-\int_{\R^d}\left(\nabla u^2-Vu\chi(u)\right)\cdot\nabla\varphi\,dx\qquad\text{a.e. in }(0,T)\text{ for all }\varphi\in\HOnen,\label{weaksol}
 \end{align}
 and a very weak,  
 \begin{align}
  \left<\partial_tu,\varphi\right>=\int_{\R^d} u^2\Delta \varphi+Vu\chi(u)\cdot\nabla\varphi\,dx\qquad\text{a.e. in }(0,T)\text{ for all }\varphi\in \bigcup\limits_{p=1}^{\infty} W^{2,p}(\R^d),\label{veryweaksol}
 \end{align}
 \end{subequations}
 senses;
\item%
$u(0,\cdot)=u_0$ in $\HmOnen$.
\end{enumerate}
We call $u$ a subsolution (supersolution) to \cref{PMEdriftCauchy}  in $[0,T]\times\R^d$ if $=$ is replaced by $\leq$ ($\geq$) in \cref{weaksol,veryweaksol}.
\end{Definition}

Our first Lemma deals with uniqueness of solutions to \cref{PMEdriftCauchy}. 
\begin{Lemma}\label[Lemma]{LemPMEDriftEx-uniqueness}
 Under the assumptions of \cref{DefPMEDrift} solutions to \cref{PMEdriftCauchy} are unique.
\end{Lemma}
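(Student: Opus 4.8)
The plan is to prove uniqueness by the classical duality (or doubling-of-variables) method adapted to porous-medium-type equations with a drift, following the scheme pioneered by Ole\u{\i}nik and used throughout \cite{Vazquez}. Suppose $u_1,u_2$ are two solutions to \cref{PMEdriftCauchy} with the same initial datum $u_0$, set $z:=u_1-u_2$, and observe that $z(0,\cdot)=0$ in $\HmOnen$ and that, subtracting the very weak formulations \cref{veryweaksol} for $u_1$ and $u_2$,
\begin{align*}
 \langle\partial_t z,\varphi\rangle=\int_{\R^d}(u_1^2-u_2^2)\Delta\varphi+V\bigl(u_1\chi(u_1)-u_2\chi(u_2)\bigr)\cdot\nabla\varphi\,dx\qquad\text{a.e. in }(0,T)
\end{align*}
for all $\varphi\in\bigcup_{p}W^{2,p}(\R^d)$. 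The key algebraic step is to write $u_1^2-u_2^2=a\,z$ with $a:=u_1+u_2\geq0$ and $u_1\chi(u_1)-u_2\chi(u_2)=b\,z$ with $b:=\int_0^1(\chi+\mathrm{id}\,\chi')\bigl(su_1+(1-s)u_2\bigr)\,ds$, which is a bounded measurable function because $\chi\in C^1_b$ by \cref{Asschi}; crucially, $|b|\leq C a$ near $\{a=0\}$, again by \cref{Asschi} ($\chi(0)=0$), so the drift term is genuinely subordinate to the diffusion term where the equation degenerates. One then obtains, for a fixed $\tau\in(0,T]$,
\begin{align*}
 \int_{\R^d}z(\tau,\cdot)\varphi(\tau,\cdot)\,dx=\int_0^\tau\!\!\int_{\R^d}z\bigl(\partial_t\varphi+a\Delta\varphi+b\,V\cdot\nabla\varphi\bigr)\,dx\,dt.
\end{align*}

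The heart of the argument is to choose $\varphi$ as the solution of a suitable backward parabolic dual problem. Given an arbitrary test function $\theta\in C_0^\infty(\R^d)$ (or $\theta=\sign$-type approximation), one solves
\begin{align*}
 \partial_t\varphi+a_\varepsilon\Delta\varphi+b\,V\cdot\nabla\varphi=0\quad\text{in }(0,\tau)\times\R^d,\qquad \varphi(\tau,\cdot)=\theta,
\end{align*}
where $a_\varepsilon:=\max\{a,\varepsilon\}$ (or a smoothed version bounded below by $\varepsilon$) is a uniformly parabolic, bounded coefficient, so that by standard linear parabolic theory $\varphi=\varphi_\varepsilon$ exists, is smooth enough to be admissible, and—this is the decisive point—satisfies $\|\varphi_\varepsilon\|_{L^\infty}\leq\|\theta\|_{L^\infty}$ together with a bound on $\int_0^\tau\!\!\int a_\varepsilon|\Delta\varphi_\varepsilon|^2\,dx\,dt$ and on $\|\nabla\varphi_\varepsilon\|$ that is uniform in $\varepsilon$. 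Plugging $\varphi_\varepsilon$ into the identity above leaves the remainder term $\int_0^\tau\!\!\int z(a-a_\varepsilon)\Delta\varphi_\varepsilon\,dx\,dt$, which one estimates by Cauchy--Schwarz using $|a-a_\varepsilon|\leq\varepsilon\One_{\{a\leq\varepsilon\}}$ and the uniform energy bound $\int a_\varepsilon|\Delta\varphi_\varepsilon|^2\leq C$, so that $|a-a_\varepsilon|\,|\Delta\varphi_\varepsilon|\leq\sqrt{\varepsilon}\,\sqrt{a_\varepsilon}|\Delta\varphi_\varepsilon|$ and the remainder is $O(\sqrt\varepsilon)$. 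Letting $\varepsilon\to0$ yields $\int_{\R^d}z(\tau,\cdot)\theta\,dx=0$ for all admissible $\theta$, hence $z(\tau,\cdot)=0$; since $\tau\in(0,T]$ was arbitrary, $u_1=u_2$ a.e.

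The main obstacle I anticipate is making the backward dual problem rigorous and extracting the $\varepsilon$-uniform estimates in the presence of the genuinely non-constant, merely $L^\infty$ (and only $L^{\infty,1}$ in the missing integrability slot—though note \cref{AssDV1}, \cref{AssDV11} are not assumed in \cref{DefPMEDrift}, so only \cref{AssV1} is available here) drift coefficient $bV$. One must be careful that the product $bV$ is only bounded and measurable, so the dual equation has to be handled by first mollifying $a$, $b$, $V$, solving a smooth nondegenerate problem, deriving the energy estimate $\int_0^\tau\!\!\int a_\varepsilon|\Delta\varphi_\varepsilon|^2+\sup_t\int|\nabla\varphi_\varepsilon|^2\leq C(\|\theta\|_{H^1},\|V\|_{L^\infty},\|b\|_{L^\infty})$ via testing with $-\Delta\varphi_\varepsilon$ and absorbing the drift term through Young's inequality (here using the subordination $|b|\leq Ca\leq Ca_\varepsilon$ to control $\int b V\cdot\nabla\varphi_\varepsilon\,\Delta\varphi_\varepsilon$ by $\tfrac12\int a_\varepsilon|\Delta\varphi_\varepsilon|^2+C\int|\nabla\varphi_\varepsilon|^2$), and only then passing to the limit in the mollification. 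An alternative that sidesteps the dual problem is Kru\v{z}kov's doubling of variables applied to the entropy/very-weak formulation, but given that \cref{DefPMEDrift} already records both a weak and a very weak formulation, the duality method above is the most direct route and is what I would write out.
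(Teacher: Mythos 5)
Your argument is correct in outline, but it takes a genuinely different route from the paper. You run the classical Oleinik duality: factor $u_1^2-u_2^2=a\,z$ and $u_1\chi(u_1)-u_2\chi(u_2)=b\,z$, solve a regularized backward parabolic dual problem with coefficient $a_\varepsilon=\max\{a,\varepsilon\}$, extract $\varepsilon$-uniform bounds on $\int\!\!\int a_\varepsilon|\Delta\varphi_\varepsilon|^2$ and on $\nabla\varphi_\varepsilon$, and let $\varepsilon\to0$; the subordination $|b|\le C\|\chi'\|_{L^\infty}a$ coming from \cref{Asschi} is exactly what lets you absorb the drift into the degenerate diffusion (note it holds globally, not just near $\{a=0\}$, which is what makes $b^2/a_\varepsilon$ uniformly bounded and the Young-inequality absorption clean). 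The paper instead uses a stationary $H^{-1}$-duality in the spirit of \cite{EfendievSenba}: it subtracts the very weak formulations, tests with $\varphi=(-\Delta+\id)^{-1}(u-\wu)$, and combines the monotonicity inequality $(z_1^2-z_2^2)(z_1-z_2)\ge c\,|z_1^{3/2}-z_2^{3/2}|^2$, the bound $|z_1^2-z_2^2|\le C\max\{z_1,z_2\}^{1/2}|z_1^{3/2}-z_2^{3/2}|$, and the same consequence of \cref{Asschi}, namely $|z_1\chi(z_1)-z_2\chi(z_2)|\le\|\chi'\|_{L^\infty}|z_1^2-z_2^2|$, to close a Gr\"onwall inequality for $\|u-\wu\|_{\HmOnen}^2$. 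The paper's route buys simplicity: no backward parabolic problem with merely bounded measurable coefficients, no mollification of $a$, $b$, $V$ with the extra error terms this generates, and no admissibility/decay issues for time-dependent test functions on $\R^d$ --- only the elliptic resolvent and Gr\"onwall are needed, and, as you correctly observe, only \cref{AssV1} ($V\in L^\infty$) enters. Your route is heavier to make fully rigorous (the dual problem, the coefficient regularization, and integration by parts in time all have to be written out), but it is the classical porous-medium argument and would transfer to situations where the $H^{-1}$ contraction structure is less convenient; as a proof strategy it is sound.
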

\begin{proof}
We use a version of the  duality argument that is standard when dealing with weak solutions of an  equation with PM diffusion. The nonlinearity in the advection term is handled similar to  \cite[Section 2]{EfendievSenba}, \cite[Chapter 9, Section 2]{Efendiev2013}.

  Let $u$ and $\wu$ be two solutions corresponding to $u_0$ and $V$. Subtracting the very weak formulations \cref{veryweaksol} for these solutions, we obtain for their difference 
  \begin{align}
   \left<\partial_t\left(u-\wu\right),\varphi\right>
   =&\iRn \left(u^2-\wu^2\right)\Delta\varphi\,dx+\iRn V\left(u\chi(u)-\wu\chi\left(\wu\right)\right)\cdot\nabla\varphi\,dx\label{Difeq1}
  \end{align}
for $\varphi\in \HTwon$ a.e. in $(0,T)$.  Consider the equation
 \begin{align}
  -\Delta \varphi+\varphi=u-\wu.\label{Eqphi}
 \end{align}
As is well-known, $-\Delta+\id :H^{k+2}\left(\R^d\right)\to H^k\left(\R^d\right)$ is an isometric isomorphism for all $k\in\Z$. 
   Since $u,{\wu}\in {L^{\infty,2}}(E_T)$ and $\partial_t u,{\partial_t\wu}\in L^2\left(0,T;\HmOnen\right)$, we have
\begin{align}
 &{\varphi\in L^{\infty}\left((0,T);\HTwon\right)\cap C\left([0,T];\HOnen\right),\qquad \partial_t\varphi\in L^2\left(0,T;\HOnen\right)},\nonumber\\
 &\|\varphi\|_{\HOnen}=\left\|u-\wu\right\|_{\HmOnen}\qquad\text{in }[0,T].
 \nonumber%
\end{align}
Plugging $\varphi$ into \cref{Difeq1} and integrating over $\left(0,t\right)$ for $t\in(0,T)$, we obtain {using standard calculus of functions with values in a Hilbert space}
\begin{align}
\frac{1}{2}\left[\iRn |\nabla\varphi|^2+\varphi^2\,dx\right]_0^t=&\int_0^t\frac{1}{2}\frac{d}{dt}\iRn |\nabla\varphi|^2+\varphi^2\,dx\,ds\nonumber\\
 =&\int_0^t\left<\partial_t\left(-\Delta \varphi+\varphi\right),\varphi\right>\,ds\nonumber\\
   =&-\int_0^t\iRn \left(u^2-\wu^2\right)\left(u-\wu\right)\,dx\,ds+\iRn\left(u^2-\wu^2\right)\varphi\,dx\,ds\nonumber\\ &+\int_0^t\iRn
   V\left(u\chi(u)-\wu\chi\left(\wu\right)\right)
   \cdot\nabla\varphi\,dx\,ds\qquad \text{in }[0,T].\label{Difeq2}
\end{align}
Due to the assumptions on $\chi$, we have
\begin{align}
 |z_1\chi\left(z_1\right)-z_2\chi\left(z_2\right)|=&\left|\int_{z_1}^{z_2}s\chi'\left(s\right)+\chi\left(s\right)\,ds\right|\nonumber\\
 \leq&2\|\chi'\|_{L^{\infty}((0,\infty))}\left|\int_{z_1}^{z_2}s\,ds\right|\nonumber\\
 =&\|\chi'\|_{L^{\infty}((0,\infty))}\left|z_1^2-z_2^2\right|.\label{estchi1}
\end{align}
Combining \cref{Difeq2,estchi1} with the basic algebraic  inequalities 
\begin{align}
 &\left(z_1^2-z_2^2\right)\left(z_1-z_2\right)\geq \Cl{Ctriv0} \left|z_1^{\frac{3}{2}}-z_2^{\frac{3}{2}}\right|^2,\nonumber\\
 &\left|z_1^2-z_2^2\right|\leq\Cl{Ctriv1}\left(\max\{z_1,z_2\}\right)^{\frac{1}{2}}\left|z_1^{\frac{3}{2}}-z_2^{\frac{3}{2}}\right|\nonumber
\end{align}
and using H\"older's and Young's inequalities, we estimate as follows:
\begin{align}
&\frac{1}{2}\left[\iRn |\nabla\varphi|^2+\varphi^2\,dx\right]_0^t\nonumber\\
 \leq&-\Cr{Ctriv0} \int_0^t\iRn \left|u^{\frac{3}{2}}-\wu^{\frac{3}{2}}\right|^2\,dx\,ds+\Cr{Ctriv1}\|\left(u,\wu\right)\|_{L^{\infty}\left(0,T;\left(\LInfn\right)^2\right)}^{\frac{1}{2}}\int_0^t\iRn\left|u^{\frac{3}{2}}-\wu^{\frac{3}{2}}\right||\varphi|\,dx\,ds\nonumber\\ &+\Cl{Ctr1}\||V|\|_{\left(L^{\infty}(E_T)\right)^d}{\|\left(u,\wu\right)\|_{L^{\infty}\left(0,T;\left(\LInfn\right)^2\right)}^{\frac{1}{2}}}\int_0^t\iRn\left|u^{\frac{3}{2}}-\wu^{\frac{3}{2}}\right||\nabla\varphi|\,dx\,ds\nonumber\\
 \leq&\Cl{Ctr2}\int_0^t\iRn|\nabla\varphi|^2+|\varphi|^2\,dx\,ds\qquad \text{in }[0,T],\label{Difeq3}
\end{align}
where $\Cr{Ctr2}$ depends on norms of $u,\widehat u$, $\chi$, and $V$. 
Applying Gr\"onwall's inequality to \cref{Difeq3} yields
\begin{align}
 \|\varphi\left(t\right)\|_{\HOnen}^2\leq&e^{t2\Cr{Ctr2}}\|\varphi(0)\|_{\HOnen}^2\qquad\text{in }[0,T].\label{estvarphi1}
\end{align}
Combining \cref{Eqphi,estvarphi1} and using $\varphi(0,\cdot)=u_0-u_0=0$ yields uniqueness.

\end{proof}

As is customary in such cases, we seek to obtain solutions to \cref{PMEdriftCauchy} as limits of solutions $\left(u_{\epsilon m}\right)$, \begin{align*}
\eps\in(0,1)\text{ and } m\in\N,                                              \end{align*}
 to initial-boundary value problems (IBVPs) in balls for non-degenerate diffusion-advection equations with regular coefficients,
\begin{subequations}\label{IBVPem}
\begin{alignat}{3}
 &\partial_t\uem=\eps \Delta \uem+\Delta \uem^2-\nabla\cdot\left(V_{\eps}\uem\chi_{\eps}\left(\uem\right)\right)&&\qquad\text{in }(0,T]\times \Bm,\label{PMEdriftem}\\
 &\uem=0&&\qquad\text{in }(0,T]\times S_m,\label{bcem}\\
 &\uem(0,\cdot)=u_{\eps 0}&&\qquad\text{in }\overline{\Bm},\label{u0em}
\end{alignat}
\end{subequations}
extending them to the whole space by
\begin{align}
 \uem=0\qquad\text{in }[0,T]\times \left(\R^d\backslash \overline{\Bm}\right).\nonumber
\end{align}
We make the following assumptions on the coefficient functions $\chi_{\eps}$ and $V_{\eps}$ and the  initial data $u_{\eps 0}$:
\begin{subequations}\label{Assreg_}
\begin{align}
&\chi_{\ve}\in C^2_b\left([0,\infty)\right), \chi(0)=0,\label{Assregchi} \\
&V_{\eps}\in C^{1,2}_b\left([0,T]\times \R^d;\R^d\right),\label{AssregV}\\
 & 0\leq u_{\ve0}\in C^3_0\left(\R^d\right). \label{Assregu0}    \end{align}
\end{subequations}
The main purpose of the subsequent Lemmas is to collect useful estimates for solutions to \cref{IBVPem}. Most of them continue to hold for solutions to \cref{PMEdriftCauchy} and are used in  
the proofs in \cref{ExPLnew,SecMainProof}.
\begin{Lemma}\label[Lemma]{LemSolIBVPem}
 Let \cref{Assreg_} be satisfied and let $m\geq \diam\left(\supp\left(u_{\eps 0}\right)\right)$. Then, the IBVP \cref{IBVPem} possesses a unique classical solution {$\uem\in C^{1,2}\left([0,T]\times\overline{\Bm}\right)
 $.}
 The solution satisfies the following set of estimates:
\begin{subequations}\label{estue}
\begin{align}
&0\leq\uem\leq \Cl{D1}\left(T,\chi_{\eps},V_{\eps},u_{\eps0}\right)\qquad\text{in }[0,T]\times \overline{\Bm},%
\label{estbnduem}   \\
&\|\uem\|_{L^{\infty}\left(0,T;L^2\left(\Bm\right)\right)}^2+\eps\||\nabla \uem|\|_{L^2\left(0,T;L^2\left(\Bm\right)\right)}^2+\left\|\left|\nabla \uem^{\frac{3}{2}}\right|\right\|_{L^2\left(0,T;L^2\left(\Bm\right)\right)}^2 \leq \Cl{D2}\left(T,\chi_{\eps},V_{\eps},u_{\eps0}\right),\label{estnabuem}
\\
&\|\partial_t \uem\|_{L^2\left(0,T;H^{-1}\left(\Bm\right)\right)}^2\leq \Cl{D3}\left(T,\chi_{\eps},V_{\eps},u_{\eps0}\right),\label{ptuem}
\end{align}
\end{subequations}
where
\begin{subequations}\label{Ds}
\begin{align}
\Cr{D1}\left(T,\chi_{\eps},V_{\eps},u_{\eps0}\right)&:=\|u_{\eps0}\|_{\LInfn}e^{T\|\chi_{\eps}\|_{L^{\infty}((0,\infty))}\|\nabla\cdot V_{\eps}\|_{L^{\infty}(E_T)}},\label{DsD1}\\
\Cr{D2}\left(T,\chi_{\eps},V_{\eps},u_{\eps0}\right)&:=\|u_{\eps 0}\|_{\LTwon}^2e^{T\Cl{Ce1}\|\chi_{\eps}'\|_{L^{\infty}((0,\infty))}^2\||V_{\eps}|\|_{{L^{\infty}(E_T)}}^2\Cr{D1}\left(T,\chi_{\eps},V_{\eps},u_{\eps0}\right)},\label{constC19}\\
\Cr{D3}\left(T,\chi_{\eps},V_{\eps},u_{\eps0}\right)&:=\Cr{D2}\left(T,\chi_{\eps},V_{\eps},u_{\eps0}\right)\Cl{C26}\max\left\{1,\Cr{D1}\left(T,\chi_{\eps},V_{\eps},u_{\eps0}\right),T\|\chi_{\eps}\|_{L^{\infty}((0,\infty))}^2\||V_{\eps}|\|_{L^{\infty}(E_T)}^2\right\}.
\end{align}
\end{subequations}
\end{Lemma}
\begin{proof}
Since \cref{PMEdriftem} is a non-degenerate quasilinear parabolic PDE, standard theory (see, e.g. \cite[Chapter V, \S6,  Theorem 6.1]{LSU}) implies that the {IBVP} \cref{IBVPem} possesses a unique classical solution $\uem$ provided it is a priori bounded. That is the case if \cref{estbnduem} holds.
Thus, to complete the proof of the Lemma it remains to prove that  estimates \cref{estue} are a priori valid.

Estimates \cref{estbnduem} are a direct application of the maximal principle for non-degenerate parabolic PDEs{, see, e.g. \cite[Chapter 1 \S 2 Theorem 2.1]{LSU}.} 

Further, multiplying equation \cref{PMEdriftem}  by $\varphi\in H^1_0\left(\Bm\right)$ and  integrating by parts using \cref{bcem}, we obtain the following weak reformulation:
\begin{align}
  \left<\partial_t\uem,\varphi\right>=-\int_{B_m}\eps\nabla \uem\cdot\nabla\varphi+\left(\nabla \uem^2-V_{\eps}\uem\chi_{\eps}\left(\uem\right)\right)\cdot\nabla\varphi\,dx\qquad\text{in }(0,T)\ \text{for }\varphi\in H^1_0\left(\Bm\right).\label{weaksolem}
 \end{align}
{Taking $\varphi=\uem$ in \cref{weaksolem}} and  using assumptions on $\chi_{\eps}$, {the chain rule,} and H\"older's and Young's inequalities, we {deduce}
\begin{align}
 &\frac{1}{2}\frac{d}{dt}\|\uem\|_{L^2\left(\Bm\right)}^2+\eps\||\nabla \uem|\|_{L^2\left(\Bm\right)}^2\nonumber\\
 =&-\frac{8}{9}\left\|\left|\nabla \uem^{\frac{3}{2}}\right|\right\|_{L^2\left(\Bm\right)}^2+\frac{2}{3}\int_{\Bm}V_{\eps}\uem^{\frac{1}{2}}\chi_{\eps}\left(\uem\right)\cdot \nabla \uem^{\frac{3}{2}}\,dx\nonumber\\
 \leq&-\frac{1}{2}\left\|\left|\nabla \uem^{\frac{3}{2}}\right|\right\|_{L^2\left(\Bm\right)}^2+\Cr{Ce1}\|\chi_{\eps}'\|_{L^{\infty}((0,\infty))}^2\||V_{\eps}|\|_{{L^{\infty}(E_T)}}^2\|\uem\|_{L^{\infty}\left(0,T;L^{\infty}\left(\Bm\right)\right)}\|\uem\|_{L^2\left(\Bm\right)}^2.\label{esten}
\end{align}
Applying Gr\"onwall's inequality to \cref{esten} yields
\begin{align}
&\|\uem(t,\cdot)\|_{L^2\left(\Bm\right)}^2+2\eps\||\nabla \uem|\|_{L^2\left(0,T;L^2\left(\Bm\right)\right)}^2+\left\|\left|\nabla \uem^{\frac{3}{2}}\right|\right\|_{L^2\left(0,T;L^2\left(\Bm\right)\right)}^2\nonumber\\
 \leq &\|u_{\eps 0}\|_{\LTwon}^2e^{t2\Cr{Ce1}\|\chi_{\eps}'\|_{L^{\infty}((0,\infty))}^2\||V_{\eps}|\|_{{L^{\infty}(E_T)}}^2\|\uem\|_{L^{\infty}\left(0,T;L^{\infty}\left(\Bm\right)\right)}},\qquad\text{for }t\in[0,T].\label{estGr1}
\end{align}
Together, \cref{estbnduem,estGr1} yield \cref{estnabuem}.

{Next, we estimate the right-hand side of \cref{weaksolem} in $H^{-1}\left(\Bm\right)$:}
\begin{align}
 \|\partial_t \uem\|_{H^{-1}\left(\Bm\right)}\leq &\eps\||\nabla \uem|\|_{L^2\left(\Bm\right)}\nonumber\\
 & +\frac{4}{3}\|\uem\|_{L^{\infty}\left(0,T;L^{\infty}\left(\Bm\right)\right)}^{\frac{1}{2}}\left\|\left|\nabla \uem^{\frac{3}{2}}\right|\right\|_{L^2\left(\Bm\right)}\nonumber\\
 &+\|\chi_{\eps}\|_{L^{\infty}((0,\infty))}\||V_{\eps}|\|_{L^{\infty}(E_T)}\|\uem\|_{L^2\left(\Bm\right)}\qquad\text{in }(0,T).\label{est25}
\end{align}
Taking the $L^2(0,T)$ norm on both sides of \cref{est25}{, we deduce}
\begin{align}
 \|\partial_t \uem\|_{L^2\left(0,T;H^{-1}\left(\Bm\right)\right)}
 \leq &\eps\||\nabla \uem|\|_{L^2\left(0,T;L^2\left(\Bm\right)\right)}\nonumber \\
 & +\frac{4}{3}\|\uem\|_{L^{\infty}\left(0,T;L^{\infty}\left(\Bm\right)\right)}^{\frac{1}{2}}\left\|\left|\nabla \uem^{\frac{3}{2}}\right|\right\|_{L^2\left(0,T;L^2\left(\Bm\right)\right)}\nonumber\\
 &+T^{\frac{1}{2}}\|\chi_{\eps}\|_{L^{\infty}((0,\infty))}\||V_{\eps}|\|_{L^{\infty}(E_T)}\|\uem\|_{L^{\infty}\left(0,T;L^2\left(\Bm\right)\right)}.\label{est26}
\end{align}
Combining \cref{est26} with 
\cref{estbnduem,estnabuem} yields \cref{ptuem}.
\end{proof}
Our next Lemma deals with stability in $L^1$. 
\begin{Lemma}\label[Lemma]{LemStabL1m}
Let $\chi_{\eps}$ satisfy \cref{Assregchi}, $V_{\eps},\widehat V_{\eps}$ satisfy \cref{AssregV}, and $u_{\eps 0},\wu_{\eps 0}$ satisfy {\cref{Assregu0},} as well as 
\begin{align*}
m\geq \max\{\diam\left(\supp\left(u_{\eps 0}\right)\right),\diam\left(\supp\left(\wu_{\eps 0}\right)\right)\}.                                                                                                                                                                                                                                          \end{align*}
 Then, the corresponding pair of classical {solutions $\uem$ and $\wuem$ to the IBVP \cref{IBVPem}} satisfies for {$t\in[0,T]$}
 \begin{align}
  &\left\|\left(\uem-\wuem\right)(t,\cdot)\right\|_{L^1\left({\Bm}\right)}\nonumber\\
\leq& \left\|u_{\eps 0}-\wu_{\eps 0}\right\|_{\LOnen}\nonumber\\
&+\Cl{D4_1}\left(T,\chi_{\eps},V_{\eps},\wu_{\eps0}\right)\left(\left\|\uem\nabla\cdot\left(V_{\eps}-\widehat{V}_{\eps}\right)\right\|_{L^1((0,T);L^1(B_m))}+\left\|\uem^{\frac{1}{2}}\left|V_{\eps}-\widehat{V}_{\eps}\right|\right\|_{L^2(0,T;L^2(B_m))}\right),\label{StabestL1m}
 \end{align}
where 
\begin{align}
 \Cr{D4_1}\left(T,\chi_{\eps},V_{\eps},\wu_{\eps0}\right):=\frac{4}{3}\|\chi_{\eps}\|_{C_b^1\left([0,\infty)\right)}\max\left\{1,\Cr{D2}^{\frac{1}{2}}\left(T,\chi_{\eps},V_{\eps},\wu_{\eps0}\right)\right\}\label{Dcontr}
\end{align}
and $\Cr{D2}(\dots)$ is as defined in \cref{Ds}.
\end{Lemma}

\begin{proof} 
The proof follows a standard scheme and relies on a version of Kato's inequality. 
 Let $\uem$ and $\wuem$ be two classical solutions to the IBVP \cref{IBVPem} for  given $V_{\eps}$ and $\widehat{V}_{\eps}$, respectively.
 Their difference
 \begin{align*}
  U_{\eps m}:=\wuem-\uem
 \end{align*}
 satisfies 
 \begin{align}
 \partial_tU_{\eps m}=&\Delta\left(a_0 U_{\eps m}\right)-\nabla\cdot\left(\widetilde{V_{\eps}}U_{\eps m}\right)-b_0\left(\uem\right)\nabla\cdot\left(\widehat{V}_{\eps}-V_{\eps}\right)-\left(\widehat{V}_{\eps}-V_{\eps}\right)b_1\left(\uem\right)\nabla \uem^{\frac{3}{2}} \ \ \text{in }(0,T)\times B_m,\label{PMEdrifted}
\end{align}
 where
 \begin{subequations}\label{b0b1}
 \begin{align}
  a_0:=&\eps+\uem+\wuem>0,\\
  b_0(z):=&z\chi_{\eps}(z),\\
  b_1\left(z\right):=&\frac{2}{3}z^{-\frac{1}{2}}b_0'\left(z\right)\nonumber\\
  =&\frac{2}{3}z^{\frac{1}{2}}\left(\chi_{\eps}'\left(z\right)+\int_0^1\chi_{\eps}'\left(sz\right)\,ds\right),\\
  \widetilde{V}_{{\eps}}:=&\widehat{V}_{\eps}\left(b_0\left(\wuem\right)-b_0(\uem)\right)\nonumber\\
  =&\widehat{V}_{\eps}\int_0^1b_0'\left(s\wuem+\left(1-s\right)\uem\right)\,ds.
 \end{align}
 \end{subequations}
 Multiplying \cref{PMEdrifted} by $\sign\left(U_{\eps m}\right)$, integrating over $\Bm$, by parts where necessary, and using \cref{b0b1}, the chain rule, and Kato's and H\"older's inequalities,  we can estimate as follows in $[0,T]$:
 \begin{align}
  &\frac{d}{dt}\int_{\Bm}|U_{\eps m}|\,dx\nonumber\\
  =&\int_{\Bm}\sign\left({a_0}U_{\eps m}\right)\Delta\left(a_0 U_{\eps m}\right)\,dx-\int_{\Bm}\nabla\cdot\left(\widetilde{V}|U_{\eps m}|\right)\,dx\label{UseKato}\\
  &-\int_{\Bm}\sign\left(U_{\eps m}\right)b_0\left(\uem\right)\nabla\cdot\left(\widehat{V}_{\eps}-V_{\eps}\right)\,dx-\int_{\Bm}\sign\left(U_{\eps m}\right)\left(\widehat{V}_{\eps}-V_{\eps}\right)\cdot b_1\left(\uem\right)\nabla \uem^{\frac{3}{2}}\,dx,\nonumber\\
  \leq&\int_{\Bm}\left|b_0\left(\uem\right)\nabla\cdot\left(\widehat{V}_{\eps}-V_{\eps}\right)\right|\,dx+\int_{\Bm}\left|\left(\widehat{V}_{\eps}-V_{\eps}\right)b_1\left(\uem\right)\nabla \uem^{\frac{3}{2}}\right|\,dx\nonumber\\
  \leq& \|\chi_{\eps}\|_{L^{\infty}((0,\infty))}\left\|\uem\nabla\cdot\left(\widehat{V}_{\eps}-V_{\eps}\right)\right\|_{L^1(B_m)}+\frac{4}{3}\|\chi_{\eps}'\|_{L^{\infty}((0,\infty))}\left\|\left|\nabla \uem^{\frac{3}{2}}\right|\right\|_{L^2\left(\Bm\right)}\left\|\uem^{\frac{1}{2}}\left|V_{\eps}-\widehat{V}_{\eps}\right|\right\|_{L^2(B_m)}.\label{estUb1}
 \end{align}
Here we have used a version of Kato's inequality \cite{Kato}  provided by a combination of  \cite[Theorems 1.2-1.3]{BrezisPonce}:  for all $f\in W^{1,1}_0(\Bm)$ such that $\Delta u\in L^1(\Bm)$ we have $\partial_{\nu}f\in L^1(S_m(0))$ and 
\begin{align}
 \int_{\Bm}\sign(f)\Delta f\,dx\leq &-\int_{S_m(0)}|\partial_{\nu} f|\,dx,\nonumber\\
 \leq&0\label{KatoB}
\end{align}
where $\partial_{\nu}f$ denotes the outer normal derivative of $f$ in the trace sense. It is inequality \cref{KatoB} that allows to estimate the first term on the right-hand side of \cref{UseKato} from above by zero.

Integrating  \cref{estUb1} over $[0,T]$ and using H\"older's inequality yields
\begin{align}
  &\|U_{\eps m}(t,\cdot)\|_{L^1\left(\Bm\right)}-\|U_{\eps m}(0,\cdot)\|_{L^1\left(\Bm\right)}\nonumber\\
  \leq& \|\chi_{\eps}\|_{L^{\infty}((0,\infty))}\left\|\uem\nabla\cdot\left(\widehat{V}_{\eps}-V_{\eps}\right)\right\|_{L^1((0,T);L^1(B_m))}\nonumber\\
  &+\frac{4}{3}\|\chi_{\eps}'\|_{L^{\infty}((0,\infty))}\left\|\left|\nabla \uem^{\frac{3}{2}}\right|\right\|_{L^2(0,T;L^2(B_m))}\left\|\uem^{\frac{1}{2}}\left|V_{\eps}-\widehat{V}_{\eps}\right|\right\|_{L^2(0,T;L^2(B_m))}\nonumber\\
  \leq&\frac{4}{3}\|\chi_{\eps}\|_{C_b^1\left([0,\infty)\right)}\max\left\{1,\left\|\nabla \uem^{\frac{3}{2}}\right\|_{L^2\left(0,T;L^2\left(\Bm\right)\right)}\right\}\nonumber\\
  &\cdot\left(\left\|\uem\nabla\cdot\left(\widehat{V}_{\eps}-V_{\eps}\right)\right\|_{L^1((0,T);L^1(B_m))}+\left\|\uem^{\frac{1}{2}}\left|V_{\eps}-\widehat{V}_{\eps}\right|\right\|_{L^2(0,T;L^2(B_m))}\right)\qquad \text{in }[0,T].\label{estUb2}
 \end{align}
Finally, combining \cref{estUb2} with \cref{estnabuem}, we arrive at \cref{StabestL1m}.
\end{proof}
A comparison property in $L^1$ {holds as well}. 
\begin{Lemma}\label[Lemma]{LemCompL1m}
 Let $\chi_{\eps}$ satisfy \cref{Assregchi}, $V_{\eps}$ satisfy \cref{AssregV}, and $u_{\eps 0},\wu_{\eps 0}$ satisfy \cref{Assregu0} as well as 
\begin{align*}
m\geq \max\{\diam\left(\supp\left(u_{\eps 0}\right)\right),\diam\left(\supp\left(\wu_{\eps 0}\right)\right)\}.                                                                                                                                                                                                                                          \end{align*}
 Then, {any corresponding pair of classical  subsolution $\uem$ and supersolution $\wuem$ to the IBVP \cref{IBVPem} satisfies} 
 \begin{align}
  \left\|\left(\uem-\wuem\right)_+(t,\cdot)\right\|_{\LOnen}\leq \left\|\left(\uem-\wuem\right)_+(0,\cdot)\right\|_{\LOnen}\qquad\text{for }t\in[0,T].
  \nonumber%
 \end{align}
\end{Lemma}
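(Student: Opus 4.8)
The plan is to run the $L^1$-argument from the proof of \cref{LemStabL1m}, this time applied to the positive part of the difference of the two solutions and with the same drift for both, and to use the homogeneous lateral boundary condition to kill all boundary contributions. Set $U_{\eps m}:=\uem-\wuem$. Subtracting the supersolution inequality for $\wuem$ from the subsolution inequality for $\uem$ and arguing as for \cref{PMEdrifted,b0b1} in the proof of \cref{LemStabL1m}, but with $\widehat V_\eps=V_\eps$ --- that is, writing $\eps U_{\eps m}+\uem^2-\wuem^2=a_0U_{\eps m}$ with $a_0:=\eps+\uem+\wuem>0$, and $\uem\chi_\eps(\uem)-\wuem\chi_\eps(\wuem)=\left(\int_0^1 b_0'(s\uem+(1-s)\wuem)\,ds\right)U_{\eps m}$ with $b_0(z):=z\chi_\eps(z)$ --- one arrives at
\begin{align*}
 \partial_tU_{\eps m}\leq\Delta\left(a_0U_{\eps m}\right)-\nabla\cdot\left(\widetilde VU_{\eps m}\right)\qquad\text{in }(0,T)\times\Bm,
\end{align*}
where $\widetilde V:=V_\eps\int_0^1 b_0'(s\uem+(1-s)\wuem)\,ds$ is bounded (since $\chi_\eps\in C_b^1([0,\infty))$ and $\uem,\wuem$ are bounded on the compact cylinder) and $C^1$ in $x$ (being assembled from the classical functions $\uem,\wuem$). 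Moreover, by the boundary condition in \cref{IBVPem}, $U_{\eps m}\leq0$ on $(0,T]\times S_m$, hence $(U_{\eps m})_+$ vanishes there, and $(U_{\eps m})_+\equiv0$ on $[0,T]\times(\R^d\backslash\overline{\Bm})$.

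Next I would multiply this inequality by $\sign_+(U_{\eps m}):=\One_{\{U_{\eps m}>0\}}$ and integrate over $\Bm$, legitimising the discontinuous multiplier either by regularising $\sign_+$ by smooth nondecreasing functions and passing to the limit, or, as in the proof of \cref{LemStabL1m}, by invoking the boundary form of Kato's inequality \cite{Kato,BrezisPonce}. The parabolic term then gives $\frac{d}{dt}\int_{\Bm}(U_{\eps m})_+\,dx$. For the diffusion term, since $\sign_+(a_0U_{\eps m})=\sign_+(U_{\eps m})$, $a_0U_{\eps m}=0$ on $S_m$, and $(a_0U_{\eps m})_+\geq0$ in $\Bm$, Kato's inequality yields $\int_{\Bm}\sign_+(U_{\eps m})\Delta(a_0U_{\eps m})\,dx\leq0$. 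For the advection term I would use $\sign_+(U_{\eps m})\nabla U_{\eps m}=\nabla(U_{\eps m})_+$ and $\sign_+(U_{\eps m})U_{\eps m}=(U_{\eps m})_+$ to recognise $\sign_+(U_{\eps m})\nabla\cdot(\widetilde VU_{\eps m})=\nabla\cdot(\widetilde V(U_{\eps m})_+)$, whose integral over $\Bm$ equals the boundary flux of $\widetilde V(U_{\eps m})_+$ and hence vanishes because $(U_{\eps m})_+=0$ on $S_m$. Combining, $\frac{d}{dt}\int_{\Bm}(U_{\eps m})_+\,dx\leq0$; integrating over $[0,t]$ and using that $(U_{\eps m})_+$ is supported in $\overline{\Bm}$ gives the claimed inequality.

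The only genuinely delicate step is the diffusion term, i.e. justifying $\int_{\Bm}\sign_+(U_{\eps m})\Delta(a_0U_{\eps m})\,dx\leq0$ in the presence of the jump of $\sign_+$ at the origin; this is handled exactly as in \cref{LemStabL1m} through \cite{BrezisPonce}. All remaining manipulations are routine integrations by parts, made clean by the vanishing of $(U_{\eps m})_+$ on the lateral boundary $S_m$ and by $(U_{\eps m})_+\equiv0$ outside $\overline{\Bm}$.
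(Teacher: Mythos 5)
Your argument is correct and is exactly the route the paper intends: the proof of this lemma is omitted there with a pointer to the Kato-type $L^1$ argument used for the stability lemma, which you reproduce with $\sign_+$ in place of $\sign$ and with the drift-difference terms absent because $\widehat V_\eps=V_\eps$. The only cosmetic point is that for a classical supersolution one only knows $\wuem\geq 0$ on $S_m$, so $a_0U_{\eps m}\leq 0$ rather than $=0$ there; this still gives $(a_0U_{\eps m})_+\in W^{1,1}_0(B_m)$, which is all that the positive-part Kato inequality and the vanishing of the advection boundary flux require.
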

 We omit the proof of \cref{LemCompL1m} since it can be carried out very similar to that of \cref{LemStabL1}.

\bigskip\bigskip
With the results from \cref{LemSolIBVPem,LemStabL1m} at hand, we can now establish existence and  estimates for  \cref{PMEdriftCauchy}. 
\begin{Lemma}\label[Lemma]{LemPMEDriftEx} 
Let {\cref{Asschi,AssV1,AssDV1,Assumpu0}} be satisfied.
Then, the Cauchy problem \cref{PMEdriftCauchy} possesses a unique solution {$u$} in terms of \cref{DefPMEDrift}. The solution satisfies the following set of estimates:
\begin{subequations}\label{uapri}
\begin{alignat}{2}
&\|u\|_{L^{\infty}(E_T)} \leq \Cr{D1}\left(T,\chi,V,u_0\right),\label{estbndu}\\
&\|u \|_{L^{\infty,2}(E_T)}^2+\left\|\left|\nabla u ^{\frac{3}{2}}\right|\right\|_{L^2(E_T)}^2
 \leq \Cr{D2}\left(T,\chi,V,u_0\right),\label{estnabu}\\
&\|\partial_t u \|_{L^2\left(0,T;H^{-1}\left(\R^d\right)\right)}\leq \Cr{D3}\left(T,\chi,V,u_0\right),\label{ptu}
\end{alignat}
\end{subequations}
where $C_i$s are as in \cref{Ds}.
\end{Lemma}
\begin{proof}[Sketch of the proof]
 \cref{LemPMEDriftEx-uniqueness} provides uniqueness, so we only need to prove existence. Since the arguments are standard, we omit most details.
 To begin with, we choose families of regularised coefficient functions $\chi_{\eps}$ and $V_{\eps}$ and initial data $u_{\eps m 0}$ for $$\eps\in(0,1)\text{ and }m\geq2$$ as follows:
\begin{alignat}{3}
&V_{\eps}:=\zeta_{d+1,\eps}\star V,\qquad&& V:=0\text{ in }(-\infty,0)\times\R^d,\label{apprV}\\
&\chi_{\eps}:=\zeta_{1,\eps}\star \left((1-\eta_{1,2\eps})\chi\right),\qquad&& \chi:=0\text{ in }(-\infty,0),\label{apprChi}\\
& u_{\eps m 0}:=\zeta_{d,\eps}\star \left(\eta_{d,\frac{m}{2}}u_0\right),\qquad&&\label{appru0}
 \end{alignat}
 where 
 \begin{alignat}{3}
 &\zeta_{k,\eps}(x):=\eps^{-k}\zeta_1\left(\eps^{-1}|x|\right)\text{ in }\R^k,\qquad&& \zeta_1(s):=\begin{cases}C^{-1}e^{\frac{1}{s^2-1}}& \text{for }|s|<1,\\
 0&\text{for }|s|\geq1,\end{cases}\qquad C:=\int_{-1}^1e^{\frac{1}{s^2-1}}\,ds,
 \label{molli}\\
 &\eta_{k,R}(x):=\eta_1\left(R^{-1}|x|\right)\text{ in }\R^k,&& \eta_1\in C_0^{\infty}([0,1);{[0,1]}),\ \eta_1= 1 \text{ in }\left[0,\frac{1}{2}\right].\label{etaR}
 \end{alignat} 
 This choice and standard properties of convolution  ensure that \cref{Assreg_} is satisfied and for every $\eps\in(0,1)$ and $m\geq2$ it holds that 
 \begin{subequations}\label{convVe}
 \begin{alignat}{3}
 &V_{\eps}\underset{\eps\rightarrow0}{\overset{*}{\rightharpoonup}}V&&\qquad\text{in }\left(L^{\infty}(E_T)\right)^d,\label{convVi}\\
 &V_{\eps}\underset{\eps\rightarrow0}{\rightarrow}V&&\qquad\text{in }\left(L^p(E_T)\right)^d,\ p\in[1,\infty),\label{convV1}\\
 &\nabla\cdot V_{\eps}\underset{\eps\rightarrow0}{\overset{*}{\rightharpoonup}}\nabla\cdot V&&\qquad\text{in }L^{\infty}(E_T),\label{convDVi}\\
 &\nabla\cdot V_{\eps}\underset{\eps\rightarrow0}{\rightarrow}\nabla\cdot V&&\qquad\text{in }L^1(E_T),\label{convDV1}
\end{alignat}
\end{subequations}
\begin{subequations}\label{normVe}
\begin{align}
 \||V_{\eps}|\|_{L^{\infty}(E_T)}\leq &\||V|\|_{L^{\infty}(E_T)},\\
 \|\nabla\cdot V_{\eps}\|_{L^{\infty}(E_T)}\leq &\|\nabla\cdot V\|_{L^{\infty}(E_T)},
\end{align}
\end{subequations}
\begin{align}
 & \chi_{\eps}\to \chi \qquad\text{ in }C^1_b\left(\R^+_0\right),
 \label{conv-chi}
 \end{align}
\begin{subequations}\label{assumpue0}
\begin{alignat}{3}
&\supp(u_{\eps m 0})\subset B_m,\\
 &\|u_{\eps m 0}\|_{\LInfn}\leq \|u_0\|_{\LInfn}&&,\label{ue0binf}\\
&{\underset{m\to\infty}{\lim}\underset{\eps\to0}{\lim}u_{\eps m 0}=u_0}&&\qquad\text{in }\LTwon.\label{conv-u0-L2}
\end{alignat}
\end{subequations}
Since all estimates from \cref{LemSolIBVPem} hold for any $\eps\in(0,1)$ and $2\leq m\in\N$, a standard argument based on compactness and a limit procedure yields the existence of a solution $u$ (defined in terms of \cref{DefPMEDrift}) for the Cauchy problem \cref{PMEdriftCauchy} such that
\begin{align}
{\underset{l\to\infty}{\lim} \underset{k\to\infty}{\lim} u_{\eps_k m_l} =u} \qquad \text{a.e. in } E_T \label{uaeconv}
\end{align}
for some sequences $(\eps_k)$ and $(m_l)$. The solution satisfies estimates \cref{uapri}, which can be  obtained by taking the limit inferior on both sides of each estimate in \cref{estue} and using the weak-$*$ continuity of norms, as well as \cref{assumpue0,conv-chi,normVe}.
\end{proof}

Out next two Lemmas contain some results on continuity with respect to time.
\begin{Lemma}%
 Let the assumptions of \cref{LemPMEDriftEx} be satisfied. Assume additionally that $u_0\in\LOnen$. Then, {the solution $u$ to the Cauchy problem \cref{PMEdriftCauchy} satisfies}
\begin{align*}
u\in C_w\left([0,T];\LInfn\cap \LOnen\right) \end{align*}
 and 
 \begin{align}
 \|u{(t,\cdot)}\|_{\LOnen}\equiv \|u_0\|_{\LOnen}\qquad\text{in }[0,T].\label{LOnepres}
\end{align}
\end{Lemma}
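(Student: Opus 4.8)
The plan is to obtain \cref{LOnepres} and the weak continuity in four moves: (i) bound $u$ in $L^{\infty}(0,T;\LOnen)$ by going back to the approximating problems; (ii) extract weak continuity in $\LTwon$ and, as a by-product, make the $L^1$-bound hold at every time; (iii) promote the bound to the mass identity by testing the very weak formulation \cref{veryweaksol} against a spatial cut-off; (iv) bootstrap to $C_w([0,T];\LInfn\cap\LOnen)$. The step requiring the most care is (iv), precisely the weak continuity in $\LOnen$: one has to rule out mass escaping to spatial infinity, and this is exactly where the conservation of mass from (iii) is used.

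For (i) I would return to the classical solutions $\uem$ of the IBVP \cref{IBVPem} provided by \cref{LemSolIBVPem}. Since $\chi_\eps(0)=0$ by \cref{Assregchi}, the constant $0$ solves \cref{IBVPem} with initial value $0\le u_{\eps m0}$, so the comparison principle for non-degenerate quasilinear parabolic equations gives $\uem\ge 0$; hence $\partial_\nu\uem\le0$ on $S_m$ because $\uem$ vanishes there. Integrating \cref{PMEdriftem} over $\Bm$ and using $\uem=0$ on $S_m$, the boundary contributions of $\Delta\uem^2$ and of $\nabla\cdot(V_\eps\uem\chi_\eps(\uem))$ vanish, while that of $\eps\Delta\uem$ equals $\eps\int_{S_m}\partial_\nu\uem\le0$; thus $t\mapsto\|\uem(t,\cdot)\|_{L^1(\Bm)}$ is nonincreasing and, by \cref{appru0} and $L^1$-contractivity of mollification, bounded by $\|u_0\|_{\LOnen}$ uniformly in $\eps,m,t$. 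Combining this with the a.e.\ convergence \cref{uaeconv} and Fatou's lemma yields $u\in L^{\infty}(0,T;\LOnen)$ with $\|u(t,\cdot)\|_{\LOnen}\le\|u_0\|_{\LOnen}$ for a.e.\ $t$. For (ii): since $u\in L^{\infty}(0,T;\LTwon)$ by \cref{estnabu} and $\partial_tu\in L^2(0,T;\HmOnen)$, the standard lemma on weakly continuous representatives gives $u\in C([0,T];\HmOnen)\cap C_w([0,T];\LTwon)$; moreover a weak-$\LTwon$ limit of functions bounded in $\LOnen$ is again in $\LOnen$ with the same bound (it agrees on each ball $\Bm$ with a uniformly bounded measure), so in fact $u(t,\cdot)\in\LOnen$ with $\|u(t,\cdot)\|_{\LOnen}\le\|u_0\|_{\LOnen}$ for \emph{every} $t\in[0,T]$.

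For (iii), fix $\eta\in C_0^{\infty}(\R^d;[0,1])$ with $\eta\equiv1$ on $B_1$, $\supp\eta\subset B_2$, and set $\varphi_R:=\eta(\cdot/R)\in\bigcup\limits_{p=1}^{\infty}W^{2,p}(\R^d)$, so that $0\le\varphi_R\uparrow 1$, $\|\nabla\varphi_R\|_{\LInfn}\le CR^{-1}$, $\|\Delta\varphi_R\|_{\LInfn}\le CR^{-2}$, and $\supp\nabla\varphi_R\cup\supp\Delta\varphi_R\subset B_{2R}$. Because $\partial_tu\in L^{\infty}(0,T;(W^{2,\infty}(\R^d))')$ and $u$, viewed in the same space, has $\partial_tu$ as its distributional time derivative, the map $t\mapsto\int_{\R^d}u(t,\cdot)\varphi_R\,dx$ is absolutely continuous, equals $\int_{\R^d}u_0\varphi_R\,dx$ at $t=0$ (consistently with $u(0,\cdot)=u_0$), and has derivative given by \cref{veryweaksol} with $\varphi=\varphi_R$; integrating over $(0,t)$,
\begin{align*}
\int_{\R^d}u(t,\cdot)\varphi_R\,dx-\int_{\R^d}u_0\varphi_R\,dx
=\int_0^t\int_{\R^d}\left(u^2\Delta\varphi_R+Vu\chi(u)\cdot\nabla\varphi_R\right)dx\,ds .
\end{align*}
By \cref{estnabu}, the first term on the right is $O(R^{-2})$, and by \cref{AssV1}, \cref{Asschi} and the $L^1$-bound from (i)--(ii), the second is $O(R^{-1})$, so the right-hand side tends to $0$ as $R\to\infty$; since $u\ge0$, monotone convergence gives $\int u(t,\cdot)\varphi_R\,dx\uparrow\|u(t,\cdot)\|_{\LOnen}$ and $\int u_0\varphi_R\,dx\uparrow\|u_0\|_{\LOnen}$ (both finite), whence $\|u(t,\cdot)\|_{\LOnen}=\|u_0\|_{\LOnen}$ for all $t\in[0,T]$, which is \cref{LOnepres}.

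Finally, for (iv): weak-$*$ continuity of $t\mapsto u(t,\cdot)$ in $\LInfn$ follows from $u\in C_w([0,T];\LTwon)$, the uniform bound in $\LInfn$, and density of $\LTwon$ in $\LOnen$. For the weak continuity in $\LOnen$, fix $\psi\in\LInfn$, $t_0\in[0,T]$, $t_n\to t_0$, and $R>0$; splitting $\psi=\psi\one_{B_R}+\psi\one_{\R^d\setminus B_R}$, the inner part converges because $\psi\one_{B_R}\in\LTwon$, while the outer part is bounded by $\|\psi\|_{\LInfn}\|u(t_n,\cdot)\|_{L^1(\R^d\setminus B_R)}$; since $\|u(t_n,\cdot)\|_{\LOnen}\equiv\|u_0\|_{\LOnen}$ by (iii) and $\int u(t_n,\cdot)\one_{B_R}\to\int u(t_0,\cdot)\one_{B_R}$, one gets $\limsup_n\|u(t_n,\cdot)\|_{L^1(\R^d\setminus B_R)}=\|u(t_0,\cdot)\|_{L^1(\R^d\setminus B_R)}$, which tends to $0$ as $R\to\infty$. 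Hence $\int u(t_n,\cdot)\psi\to\int u(t_0,\cdot)\psi$, so $u\in C_w([0,T];\LOnen)$, and altogether $u\in C_w([0,T];\LInfn\cap\LOnen)$.
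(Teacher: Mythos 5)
Your proposal is correct, but it reaches \cref{LOnepres} and the weak continuity along a route that differs from the paper's in several concrete respects. The paper tests the very weak formulation \cref{veryweaksol} directly with the cut-offs $\eta_{d,n}$ of \cref{etaR}, exploits $\partial_t u\in L^{\infty}\bigl(0,T;\bigl(W^{2,\infty}(\R^d)\bigr)'\bigr)$ together with the weak-$*$ convergence \cref{convetaW2} to obtain the mass identity \cref{id1} in one stroke (the advective flux is integrable without any prior $L^1$ bound because $\chi(0)=0$ in \cref{Asschi} gives $u\chi(u)\lesssim u^2\in L^{\infty}(0,T;\LOnen)$ by \cref{estnabu}), then uses Fatou's lemma to get $u\in L^{\infty,1}(E_T)$, and finally invokes \cite[Chapter 3, \S1, Lemma 1.4]{Temam} for $u\in C_w\left([0,T];\LInfn\cap\LOnen\right)$. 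You instead bound $|\chi(u)|$ crudely by $\|\chi\|_{L^\infty}$, which forces you to secure a uniform $L^1$ bound first; you do this by returning to the approximating problems \cref{IBVPem} and the a.e.\ convergence \cref{uaeconv}, which is legitimate but only because uniqueness (\cref{LemPMEDriftEx}) identifies $u$ with the constructed limit — the same device the paper uses in the proof of \cref{LemStabL1}, so you should say explicitly that this is where uniqueness enters. Your step (iii) is then a compact-support variant of the paper's cut-off argument, and your step (iv) replaces the citation of Temam's lemma by a self-contained argument in which the mass identity supplies exactly the tightness needed to exclude escape of mass to infinity; this is arguably more transparent for the non-reflexive target $\LOnen$. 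Two cosmetic points: the approximate initial data $u_{\eps m0}$ are mollifications, so "$L^1$-contractivity" holds up to the normalisation of $\zeta_{d,\eps}$ (harmless, since any fixed constant suffices for your purposes), and in (iv) the density you need is that of $\LOnen\cap\LTwon$ in $\LOnen$ (not of $\LTwon$ itself), together with the remark that the essential bound $\|u(t,\cdot)\|_{\LInfn}\leq\Cr{D1}(\dots)$ persists at every $t$ because weak $\LTwon$ limits preserve a.e.\ upper bounds.
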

\begin{proof}
Consider ${\eta_{d,n}}$, $n\in\N$,  as defined by \cref{etaR}. 
Then,
\begin{subequations}
\begin{alignat}{3}
&{\eta_{d,n}}\underset{n\to\infty}{\to} 1&&\qquad\text{in }\R^d,\label{convetap}\\
&\eta_{d,n}\underset{n\to\infty}{\overset{*}{\rightharpoonup}} 1&&\qquad \text{in }W^{2,\infty}(\R^d).\label{convetaW2}
\end{alignat}
\end{subequations}
Using $\partial_tu \in L^{\infty}\left(0,T;\left(W^{2,\infty}(\R^d)\right)'\right)$, \cref{veryweaksol} for $\varphi\equiv1$, \cref{convetaW2}, and integrability of $u_0$, we deduce that
\begin{align}
\underset{n\to\infty}{\lim}\iRn u (t,\cdot){\eta_{d,n}}\,dx
=&\left<u(t,\cdot),1\right>\nonumber\\
=&\|u_0\|_{\LOnen}\qquad\text{ for }t\in[0,T].\label{id1}
\end{align}
With \cref{convetap} and \cref{id1} and Fatou's lemma it follows that
\begin{subequations}\label{ineq22}
\begin{align}
 \iRn u (t,\cdot)\,dx\leq&\underset{n\to\infty}{\lim}\iRn u (t,\cdot){\eta_{d,n}}\,dx\\
 =&\iRn u_{0}\,dx\qquad \text{for }t\in[0,T],
\end{align}
\end{subequations}
so that $u\in L^{\infty,1}(E_T)$. Since $\eta_{d,n}\in[0,1]$, we conclude with \cref{ineq22} that \begin{align}
 \|u(t,\cdot)\|_{\LOnen}=\|u_0\|_{\LOnen}\qquad\text{ for }t\in[0,T].\nonumber%
\end{align}

\bigskip
    Finally, we confirm  weak continuity. Since \begin{align*}
&u\in C\left([0,T];\HmOnen\right)\cap  L^{\infty}\left(0,T;\LInfn\cap \LOnen\right),\\
&\LInfn\cap \LOnen\subset \LTwon\subset \HmOnen,                                                                                                                                                                                                                             \end{align*}
 \cite[Chapter 3, \S1, Lemma 1.4]{Temam} applies and yields $$u\in C_w\left([0,T];\LInfn\cap \LOnen\right),$$ 
 as required.
\end{proof}
The next Lemma provides an estimate for the solution change 
over time.
\begin{Lemma}\label[Lemma]{LemContiL1}
 Let the assumptions of \cref{LemPMEDriftEx} be satisfied. 
 Then, the solution $u$ to the Cauchy problem \cref{PMEdriftCauchy} satisfies for all $0\leq\tau<t\leq T$, $R>0$, and 
 \begin{align}
  \varphi\text{ Lipschitz},\quad\varphi(0)=0,\quad  \|\nabla \varphi\|_{\LInfn}\leq 1\label{varphi1}
 \end{align} 
 the estimate
 \begin{align}
  \left|\iRn (u(t,\cdot)-u(\tau,\cdot))\eta_{d,{R}}\varphi\,dx\right|
 \leq(t-\tau)^{\frac{1}{2}}{R}^{\frac{d}{2}}\Cl{C37}\Cr{D3}\left(T,\chi,V,u_0\right),\label{estL1uu0}
 \end{align}
 where $\eta_{d,{R}}$ is the cut-off function from \cref{etaR} and $\Cr{D3}(\dots)$ is as defined in \cref{Ds}. 
\end{Lemma}
\begin{proof}
With \cref{etaR,varphi1}, the chain rule, and  H\"older's inequality we have
\begin{align}
 \left\|\nabla \left(\eta_{d,{R}}\varphi\right)\right\|_{\LTwon}=&\left\|\eta_{d,{R}}\nabla\varphi+\varphi \nabla \eta_{d,{R}}\right\|_{\LTwon}\nonumber\\
 \leq&\left\|\nabla\varphi\right\|_{\LInfn}\left\|\eta_{d,{R}}\right\|_{\LTwon}+R\left\|\nabla\varphi\right\|_{\LInfn}\left\|\nabla\eta_{d,{R}}\right\|_{\LTwon}\nonumber\\
 \leq&{R}^{\frac{d}{2}}\sqrt{2}\left\|\eta_{d,1}\right\|_{\HOnen}\nonumber\\
 =&{R}^{\frac{d}{2}}\Cr{C37}.\label{estphipsi}
\end{align}
Combining \cref{estphipsi,ptu} and H\"older's inequality, we obtain
\begin{align}
 \left|\iRn (u(t,\cdot)-u(\tau,\cdot))\eta_{d,{R}}\varphi\,dx\right|
 \leq&\left|\int_{\tau}^t\left<\partial_tu,\eta_{d,{R}}\varphi\right>\,ds\right|\nonumber\\
 \leq&(t-\tau)^{\frac{1}{2}}\left\|\nabla \left(\eta_{d,{R}}\varphi\right)\right\|_{\LTwon}\|\partial_tu\|_{L^2\left(0,T;\HmOnen\right)}\nonumber\\
 \leq&(t-\tau)^{\frac{1}{2}}{R}^{\frac{d}{2}}\Cr{C37}\Cr{D3}\left(T,\chi,V,u_0\right),\nonumber
\end{align}
as required.
\end{proof}

Next, we establish $L^1$ stability for \cref{PMEdriftCauchy}.
\begin{Lemma}\label[Lemma]{LemStabL1}
Let $\chi$ satisfy \cref{Asschi}, $V,\widehat V$ satisfy \cref{AssV4}, and $u_0,\wu_0$ satisfy  \cref{Assumpu01}. Then, the corresponding solutions $u$ and $\wu$ {to the Cauchy problem \cref{PMEdriftCauchy}} satisfy
 \begin{align}
  &\underset{[0,T]}{\max}\left\|u-\wu\right\|_{\LOnen}\nonumber\\
\leq&\left\|u_0-\wu_0\right\|_{\LOnen}\nonumber\\ 
&+\Cl{D4}\left(T,\chi,V,u_0\right)\left(T\left\|\one_{\{u>0\}}\nabla\cdot\left(V-\widehat{V}\right)\right\|_{L^{\infty,1}(E_T)}+T^{\frac{1}{2}}\left\|\one_{\{u>0\}}\left|V-\widehat{V}\right|\right\|_{L^{\infty}(E_T)\cap L^{\infty,1}(E_T)}\right),\label{StabestL1}
 \end{align}
where 
\begin{align}
 \Cr{D4}\left(T,\chi,V,u_0\right):=\Cr{D4_1}\max\left\{\Cr{D1},\Cr{D1}^{\frac12}\right\}\left(T,\chi,V,u_0\right)\label{l_D4}
\end{align}
and $\Cr{D1}$ and $\Cr{D4_1}$ are as defined in \cref{DsD1} and \cref{Dcontr}, respectively.
\end{Lemma}
\begin{proof} 
 Since solutions to \cref{PMEdriftCauchy} are unique, they can be obtained as limits of  approximations such as introduced in  \cref{apprV,apprChi,appru0,molli,etaR} in the proof of \cref{LemPMEDriftEx}. 
In particular, 
 \begin{align}
  \left\|u_0-\wu_0\right\|_{\LOnen}=\underset{m\to\infty}{\lim}\underset{\eps\to0}{\lim}\left\|u_{\eps m 0}-\wu_{\eps m 0}\right\|_{\LOnen}.\label{convue0L1_}
 \end{align}
 The corresponding solution families $(u_{\eps m})$ and $(\wu_{\eps m})$ satisfy estimates \cref{estue} from \cref{LemSolIBVPem}. Therefore, a standard application of the Lions-Aubin lemma, a diagonal argument, and the above mentioned uniqueness of the limit solutions together imply that
 \begin{align}
  \left\|u-\wu\right\|_{\LOnen}=&\sup_{R>0}\left\|u-\wu\right\|_{L^1(B_R)}\nonumber\\
  =&\sup_{R>0}\underset{m\to\infty}{\lim}\underset{\eps\to0}{\lim}\left\|u_{\eps m}-\wu_{\eps m}\right\|_{L^1(B_R)}\nonumber\\
  \leq&\underset{m\to\infty}{\lim\inf}\underset{\eps\to0}{\lim\inf}\left\|u_{\eps m}-\wu_{\eps m}\right\|_{L^1(\Bm)}\qquad\text{a.e. in }(0,T).\nonumber%
 \end{align}
 
 Further, since assumptions of \cref{LemStabL1m} are satisfied, $(u_{\eps m})$ and $(\wu_{\eps m})$ satisfy estimate \cref{StabestL1m}.  
Passing to the limit inferior on both sides of \cref{StabestL1m} and using 
\cref{convue0L1_,StabestL1_,convV1,convDV1,conv-chi,ue0binf,estbnduem,uaeconv} and the dominated convergence theorem, we arrive at the inequality
 \begin{align}
  \left\|u-\wu\right\|_{\LOnen}
\leq& \left\|u_0-\wu_0\right\|_{\LOnen}+\Cr{D4_1}\left(T,\chi,V,u_0\right)\left(\left\|u\nabla\cdot\left(V-\widehat{V}\right)\right\|_{L^1(E_T)}+\left\|u^{\frac12}\left|V-\widehat{V}\right|\right\|_{L^2(E_T)}\right)\quad\text{a.e.}\label{StabestL1_}
 \end{align}
 Using H\"older's inequality and   interpolation inequalities in $L^p$ spaces on the right-hand side of \cref{StabestL1_} and the fact that $u,\wu\in C_w([0,T];\LOnen)$  finally yields \cref{StabestL1}.
\end{proof}
A comparison property in $L^1$ also holds. 
\begin{Lemma}\label[Lemma]{LemCompL1}
Let $\chi$ satisfy \cref{Asschi}, $V$ satisfy \cref{AssV4}, and $u_0,\wu_0$ satisfy  \cref{Assumpu01}. Then, any corresponding pair of subsolution $u$ and supersolution $\wu$ to the Cauchy problem \cref{PMEdriftCauchy} satisfies
 \begin{align}
  \left\|\left(u-\wu\right)_+(t,\cdot)\right\|_{\LOnen}\leq \left\|\left(u-\wu\right)_+(0,\cdot)\right\|_{\LOnen}\qquad\text{for }t\in[0,T].\nonumber%
 \end{align}
\end{Lemma}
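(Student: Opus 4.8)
The plan is to prove the one-sided $L^1$-comparison by a duality argument in the spirit of the uniqueness proof of \cref{LemPMEDriftEx-uniqueness}, but with a \emph{parabolic} dual problem, so as to isolate the positive part of the difference. Fix $t\in(0,T]$; since $\left\|(u-\wu)_+(t,\cdot)\right\|_{\LOnen}=\sup\bigl\{\iRn(u-\wu)(t,\cdot)g\,dx:\ g\in C_0^\infty(\R^d),\ 0\le g\le1\bigr\}$, it is enough to bound $\iRn(u-\wu)(t,\cdot)g\,dx$ by $\left\|(u_0-\wu_0)_+\right\|_{\LOnen}$ for each such $g$. Subtracting the very weak inequalities \cref{veryweaksol} for the subsolution $u$ and the supersolution $\wu$, tested against a nonnegative $\varphi\in\bigcup_pW^{2,p}(\R^d)$, writing $u^2-\wu^2=a(u-\wu)$ with $a:=u+\wu\ge0$ bounded, and using the linearisation $u\chi(u)-\wu\chi(\wu)=a\widetilde c\,(u-\wu)$ with $\widetilde c$ measurable and $|\widetilde c|\le\|\chi'\|_{L^\infty(\Rpositive)}$ (which is exactly \cref{estchi1}), one obtains, a.e. in $(0,t)$,
\begin{align}
 \left<\partial_s(u-\wu),\varphi\right>\le\iRn a(u-\wu)\bigl(\Delta\varphi+\widetilde c\,V\cdot\nabla\varphi\bigr)\,dx.\nonumber
\end{align}
The point of \cref{estchi1} here is that the advection now carries the \emph{same} weight $a$ as the diffusion, with a bounded factor $b:=\widetilde c\,V$.

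Next I would regularise: pick smooth, bounded $a_n\ge0$ and $b_n$ converging in $L^p((0,t)\times\R^d)$, $p<\infty$, to $a$ and $b$, with norms dominated by those of $a$ and $b$, set $\widehat a_n:=a_n+\tfrac1n\ge\tfrac1n$, and solve the backward Cauchy problem $-\partial_s\psi_n=\widehat a_n(\Delta\psi_n+b_n\cdot\nabla\psi_n)$ in $(0,t)\times\R^d$ with $\psi_n(t,\cdot)=g$. In reversed time this is a non-degenerate linear parabolic problem with smooth coefficients whose drift $\widehat a_nb_n$ is bounded uniformly in $n$ (as $\widehat a_n\le\|a\|_{L^\infty(E_T)}+1$), hence it has a classical solution, and, having no zeroth-order term, it satisfies $0\le\psi_n\le1$ by the maximum principle. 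Plugging $\varphi=\psi_n$ into the inequality above, integrating over $(0,t)$, integrating by parts in time (justified, as in \cref{LemPMEDriftEx-uniqueness}, via a time-mollification using $\partial_s(u-\wu)\in L^2(0,t;\HmOnen)$ and $u-\wu\in L^\infty(0,t;\LTwon)$), and using the dual equation, I get
\begin{align}
 \iRn(u-\wu)(t,\cdot)g\,dx\le\iRn(u_0-\wu_0)\psi_n(0,\cdot)\,dx+R_n,\quad R_n:=\int_0^t\!\iRn(u-\wu)\Bigl[(a-\widehat a_n)\Delta\psi_n+(ab-\widehat a_nb_n)\cdot\nabla\psi_n\Bigr]dx\,ds.\nonumber
\end{align}
Since $0\le\psi_n(0,\cdot)\le1$, the first term is at most $\left\|(u_0-\wu_0)_+\right\|_{\LOnen}$, so the whole statement reduces to proving $\liminf_{n\to\infty}R_n\le0$.

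This last step is the main obstacle, and reflects the interplay between the degeneracy of $a$ — which vanishes precisely on $\{u=\wu=0\}$ — and the advection in the dual equation. Testing the dual equation with $-\Delta\psi_n$ and with $\psi_n$, and absorbing the drift by Young's and Gr\"onwall's inequalities (here the uniform bound on $\widehat a_nb_n$ is essential), yields bounds on $\bigl\|\widehat a_n^{1/2}\Delta\psi_n\bigr\|_{L^2((0,t)\times\R^d)}$ and $\|\nabla\psi_n\|_{L^2((0,t)\times\R^d)}$ that are uniform in $n$. The $\nabla\psi_n$-part of $R_n$ then tends to $0$ because $ab-\widehat a_nb_n\to0$ in $L^2$ and $|u-\wu|$ is bounded; the $\Delta\psi_n$-part is handled by the pointwise bound $|u-\wu|\le u+\wu=a$, which confines the commutator weight $(u-\wu)(a-\widehat a_n)$ to the region where $a$, and hence the singular part of the diffusion, is small, so that H\"older's inequality, the strong $L^p$-convergence of the coefficients and the dominated convergence theorem close the argument. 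Since this computation follows the classical duality scheme for porous-medium-type equations, we omit the routine details; as an alternative, for the sub- and supersolutions that actually arise in \cref{Subsection:support} one may instead pass to the limit directly in \cref{LemCompL1m} along the smooth approximations \cref{apprV,apprChi,appru0,molli,etaR}.
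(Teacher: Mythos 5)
Your route is genuinely different from the paper's. The paper proves this lemma exactly as it proves \cref{LemStabL1}: the comparison is first established for the classical solutions of the regularised problems \cref{IBVPem} via Kato's inequality applied to the positive part of the difference (that is \cref{LemCompL1m}), and is then transported to \cref{PMEdriftCauchy} through the limit passage along the approximations \cref{apprV,apprChi,appru0}. You instead run a direct duality argument at the level of the weak/very weak formulations: linearise diffusion and drift with the common degenerate weight $a=u+\wu$ (your use of \cref{estchi1} for the drift is exactly the right observation), solve a regularised backward dual problem, and control the commutator $R_n$. This is the classical Oleinik/B\'enilan--Crandall-type scheme, and it has a conceptual advantage here: it addresses arbitrary sub-/supersolutions directly, whereas sub-/supersolutions are neither unique nor constructed as limits of the regularised problems, so the ``pass to the limit as in \cref{LemStabL1}'' route has to be adapted with some care. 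The paper's route, on the other hand, re-uses machinery already in place and avoids dual problems altogether.

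The step you cannot wave through is $\liminf_n R_n\le 0$, and your treatment of the $\Delta\psi_n$-part has a gap as written. With $\widehat a_n=a_n+\tfrac1n$ you must make $\left\|(u-\wu)(a-\widehat a_n)\,\widehat a_n^{-1/2}\right\|_{L^2((0,t)\times\R^d)}$ small; splitting $(a-\widehat a_n)^2\le 2(a-a_n)^2+2n^{-2}$, the lift term satisfies only $n^{-2}\widehat a_n^{-1}\le n^{-1}$, which is \emph{not} integrable over $(0,t)\times\R^d$ by itself, so ``confining the weight to the region where $a$ is small'' does not close the argument. What does close it is the pointwise bound $|u-\wu|\le a$ combined with $a\in L^{\infty}(0,T;\LTwon)$, so that $n^{-1}\int_0^t\iRn a^2\,dx\,ds\to0$, together with a quantitative choice of the mollification, e.g. $\|a-a_n\|_{L^2((0,t)\times\R^d)}=o\bigl(n^{-1/2}\bigr)$ --- a condition you never impose. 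You should also justify the admissibility of $\psi_n$: the formulation \cref{veryweaksol} admits time-independent test functions in $W^{2,p}(\R^d)$ a.e.\ in time (and, for the sub-/supersolution inequalities, implicitly nonnegative ones), so you must insert the slices $\psi_n(s,\cdot)$, verify their membership in $W^{2,p}(\R^d)$ and the spatial decay needed for the dual energy estimates on all of $\R^d$, and only then perform the time integration by parts; routine, but not free. Finally, your closing ``alternative'' --- passing to the limit directly in \cref{LemCompL1m} --- does not prove the lemma as stated, since a general sub-/supersolution is not a limit of the approximate classical solutions; that alternative is in essence the paper's own (omitted) argument and inherits the same caveat.
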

 We omit the proof of \cref{LemCompL1} since it can be carried out very similar to that of \cref{LemStabL1}. 

\subsection{Non-explosion of  support for small times} \label{Subsection:support}
In this Subsection we show that  the support of a solution to \cref{PMEdriftCauchy}
that has a sufficiently small support at the initial time cannot explode on a short time interval. More precisely, our result is as follows.
\begin{Lemma}\label[Lemma]{Lem:support}
Let $\chi$ satisfy \cref{Asschi}, $V$ satisfy \cref{AssV4}, and $u_0$ satisfy  \cref{Assumpu01}. Define
\begin{subequations}
\begin{align}
 &\Cl{Dsupp1}(T,\chi,V):=\frac{1}{4(d+2)}\|(\nabla\cdot V)_-\|_{L^{\infty}(E_T)}\|\chi\|_{L^{\infty}(\Rpositive)},\label{ParSupp1}\\
 &\Cl[Rho]{rho-u0V}(T,\chi,V):=\frac{d+2}{\||V|\|_{L^{\infty}(E_T)}\|\chi'\|_{L^{\infty}(\Rpositive)}},\label{ParSupp2}\\
&\Cl[Rho]{rho4}\left(T,\chi,V,a\right):=\left(1-\frac{\Cr{Dsupp1}(T,\chi,V)}{a}\right)\Cr{rho-u0V}(T,\chi,V),\label{ParSupp3}\\
&\Cl[T]{T-usuppV}(T,\chi,V,
\rho,\aa,\bb)
:= \frac{1}{8\aa}\ln\left(\frac{1}{b}\min\left\{\rho,\Cr{rho4}\left(T,\chi,V,a\right)\right\}\right),\label{Tcsupp}\\
&\rho_{\aa ,\bb }(t):= \bb e^{t4\aa},
\end{align}
where  constants $a$, $b$, $\delta$, and $\rho$ are such that
 \begin{align}
 &0<\rho\leq\Cr{rho-u0V}(T,\chi,V),\\
 &\delta\in(0,1),\\
 &\bb \in (\delta\rho,\rho),\label{Assb}\\
 &\aa>\frac{\Cr{Dsupp1}(T,\chi,V)}{1-\frac{b}{\Cr{rho-u0V}(T,\chi,V)}},\label{AssA}\\
 &\aa\geq \frac{\|u_0\|_{\LInfn}}{\bb ^2-(\delta \rho)^2}.\label{AssA2}
 \end{align}
\end{subequations}
Then,
 \begin{align}
  \supp(u_0)\subset \overline{B_{\delta\rho}}\nonumber%
 \end{align}
 implies for any corresponding supersolution $u:[0,T]\times \oOm\to [0,\infty)$ (in terms of \cref{DefPMEDrift}) to \cref{PMEdriftCauchy} that 
\begin{align*}
&\supp(u(t,\cdot))\subset \overline{B_{\rho_{a,b}(t)}},\qquad \rho_{\aa ,\bb }(t)\in(0,\rho)
\end{align*}
for all $t\in[0,\min\left\{T,\Cr{T-usuppV}(T,\chi,V,
\rho,\aa,\bb)\right\})$.

\end{Lemma}
\begin{proof}
Following a common approach for equations with PM diffusion (see, e.g. \cite[Chapter 14]{Vazquez}), we construct a supersolution to \cref{PMEdrift} that is supported in $B_{\rho}$  and is no smaller than $u$ for $t\in[0,\Cr{T-usuppV}]$. 

 Direct computation shows that
 \begin{align}
  U_{\aa,\bb}(t,x):=& \aa \left(\bb ^2e^{t8\aa }-|x|^2\right)_+\qquad\text{for }(t,x)\in\R\times\R^d\label{Uab}
 \end{align}
is a strong and, hence, a weak and very weak  solution to 
\begin{align}
  \partial_t U=\Delta U^2 +4(d+2)\aa U\quad\text{in }\R\times\R^d\label{PMEAsource}
 \end{align}
and has the following properties:
\begin{align}
 \supp(U_{\aa,\bb}(t,\cdot))=\overline{B_{\rho_{\aa ,\bb }(t)}},\qquad \rho_{\aa ,\bb }(t):= \bb e^{t4\aa }\qquad\text{for }t\in \R,\label{suppAB}
\end{align}
and 
\begin{align}
&U_{\aa,\bb}(t,\cdot)\in \WInfn,\qquad
 \|\nabla U_{\aa,\bb}(t,\cdot)\|_{\LInfn}
 =2\aa \bb e^{t4\aa }\qquad\text{for }t\in\R.\label{estnUBA}
\end{align}
Due to   \cref{suppAB} we have 
\begin{align}
 \rho_{a,b}(t)<\rho\qquad\text{for }t\in
 \left[0,\min\left\{T,\frac{1}{4a}\ln\left(\frac{\rho}{b}\right)\right\}\right),\label{suppUab}
\end{align}
whereas   \cref{Assb,Uab,AssA2} imply 
\begin{align}
 \underset{B_{\delta \rho}}{\inf}U_{\aa,\bb}(0,\cdot)=&\aa\left(\bb ^2-(\delta \rho)^2\right)\geq \|u_0\|_{\LInfn}.\label{estUAB01}
\end{align}
 We are going to show that $U_{\aa,\bb}$ is a supersolution to \cref{PMEdrift} in $[0,\Cl[T]{T_AB}]\times \R^d$ if $\Cr{T_AB}\in(0,T]$ is sufficiently small. Since $U_{\aa,\bb}$ satisfies equation  \cref{PMEAsource}, being a supersolution {to} \cref{PMEdrift} is equivalent to 
 \begin{align}
4(d+2)\aa  U_{\aa,\bb}+\nabla\cdot\left(VU_{\aa,\bb}\chi\left(U_{\aa,\bb}\right)\right)\geq0\qquad \text{a.e. in }(0,\Cr{T_AB}]\times \R^d.\nonumber%
 \end{align}
Using \cref{estnUBA,ParSupp1,ParSupp2} and the assumptions on $V$ and $\chi$, we can estimate as follows:
\begin{align}
 &4(d+2)\aa U_{\aa,\bb}+\nabla\cdot\left(VU_{\aa,\bb}\chi\left(U_{\aa,\bb}\right)\right)\nonumber\\
 =&4(d+2)\aa U_{\aa,\bb}+(\nabla\cdot V)U_{\aa,\bb}\chi\left(U_{\aa,\bb}\right)+V\left(U_{\aa,\bb}\chi'\left(U_{\aa,\bb}\right)+\chi\left(U_{\aa,\bb}\right)\right)\nabla U_{\aa,\bb}\nonumber\\
 \geq&4(d+2)\aa U_{\aa,\bb}-\|(\nabla\cdot V)_-\|_{L^{\infty}(E_T)}\|\chi\|_{L^{\infty}(\Rpositive)}U_{\aa,\bb}\nonumber\\
 &-2\||V|\|_{L^{\infty}(E_T)}\|\chi'\|_{L^{\infty}(\Rpositive)}\|\nabla U_{\aa,\bb}\|_{\LInfn}U_{\aa,\bb}\nonumber\\
 \geq&4(d+2)\aa U_{\aa,\bb}\left(1-\frac{1}{4(d+2)\aa }\|(\nabla\cdot V)_-\|_{L^{\infty}(E_T)}\|\chi\|_{L^{\infty}(\Rpositive)}\right.\nonumber\\
 &\phantom{4(d+2)\aa U_{\aa,\bb}\left(1\right.}\left.-\frac{1}{d+2}\||V|\|_{L^{\infty}(E_T)}\|\chi'\|_{L^{\infty}(\Rpositive)}\bb e^{t4\aa }\right)\nonumber\\
 \geq&4(d+2)\aa U_{\aa,\bb}\left(1-\frac{\Cr{Dsupp1}(T,\chi,V)}{a}-\frac{\bb }{{\Cr{rho-u0V}(T,\chi,V)}}e^{t4\aa }\right)\nonumber\\
 \geq&0\qquad \text{a.e. in }{(0,\min\{T,\Cr{T_AB}({\chi,V},{\aa ,\bb })\}]}\times\R^d,\nonumber
\end{align}
where
\begin{align}
\Cr{T_AB}({\chi,V},{\aa ,\bb }):=&\frac{1}{4\aa }\ln\left(\frac{\Cr{rho4}\left(T,\chi,V,a\right)}{b}\right)>0\nonumber%
\end{align}
due to \cref{AssA,ParSupp3}. 
Consequently, $U_{\aa,\bb}$ is indeed a supersolution to \cref{PMEdriftCauchy} on $[0,\Cr{T_AB}(\dots)]\times\R^d$.

Altogether, we have shown that $U_{\aa,\bb}:[0,\min\left\{T,\Cr{T-usuppV}(\dots)\right\}]\times\R^d\to[0,\infty)$ is a weak supersolution to \cref{PMEdrift}, $U_{\aa,\bb}(0,\cdot)\geq u_0$ holds a.e. due to \cref{estUAB01}, and $U_{\aa,\bb}(\cdot)$ is supported in $B_{\rho_{a,b}(\cdot)}$ due to \cref{suppUab}.
Now we can apply \cref{LemCompL1} and  conclude that $u\leq U_{\aa,\bb}$ a.e. in $[0,\min\left\{T,\Cr{T-usuppV}(\dots)\right\}]\times\R^d$. Hence, $\supp(u(t,\cdot))\subset \overline{B_{\rho_{a,b}(t)}}$ for all $t\in [0,\min\left\{T,\Cr{T-usuppV}(\dots)\right\}]$, as required.  

\end{proof}

\section{Well-posedness of  {\texorpdfstring{\cref{PLnewIBVP}}{}} for fixed \texorpdfstring{$w$}{w}}\label{ExPLnew}
In this Section we study the Cauchy problem \cref{PLnewIBVP} for a fixed $w$. We refer to \cref{SecCh}\cref{PDE1,ChPM} for the discussion of this equation.  Our proofs here rely on the results collected  for the Cauchy problem \cref{PMEdriftCauchy} in \cref{Sec:FixedAdv}. In order to produce a fixed advection direction as in \cref{PMEdrift}, we take
\begin{align}
 V:=\V(w,u):=w(\nabla H\star u).\label{DefVBil}
\end{align} 
In \cref{Subsec:H_V} we establish some useful properties of such $\V$ needed in order to analyse \cref{PLnewIBVP} in \cref{SubSecuEqWP}.

\subsection{Properties of \texorpdfstring{$H$ and $\V$}{H and V}}\label{Subsec:H_V}
In this Subsection we establish some properties of   kernel $H$ from \cref{H} and the bilinear operator $\V$ from \cref{DefVBil}.
\begin{Lemma}[Properties of $H$]\label[Lemma]{LemH}
 Let $F\in C^1(\Rnonnegative)$, and $H$ as defined in \cref{H}. Then,
 \begin{enumerate}
  \item%
  $H\in \WInfn$, $\nabla H$ is supported in $\overline{B_1}$, 
\begin{align}
&\nabla H= -\one_{[0,1]}F(|\cdot|)\sign\qquad\text{ in }B_1\backslash\{0\}\label{DerH},
\end{align}
and
\begin{subequations}
\begin{align}
 \|\nabla H\|_{(\LInfn)^d}=&\max_{[0,1]}|F|,\label{nHnormi}\\
 \|\nabla H\|_{(\LOnen)^d}=&2d\left|B^{d-1}_1\right|\int_0^1|F(s)|s^{d-1}\,ds;\label{nHnorm1}
\end{align}
\end{subequations}
\item%
for $d\geq2$, it holds that $\nabla\nabla^T H\in (\Mn)^{d\times d}$,\begin{align}  
 \nabla\nabla^T H=&
-\left(F'(|\cdot|)\sign\sign^T+\frac{F(|\cdot|)}{|\cdot|}\left(I_d-\sign\sign^T\right)\right)\LL^d\mres (B_1\backslash\{0\})\nonumber\\
& +\left(F(1)\sign\sign^T\right){\cal H}^{d-1}\mres S_1,
 \label{D2H}
 \end{align}
 \begin{align}
 \underset{k,l\in\{1,\dots,d\}}{\max}{\left\|\partial^2_{x_kx_l}H\right\|_{\Mn}} 
 \leq&{\cal H}^{d-1}(S_1)\left(\int_0^1\left(|F'(s)|+\frac{|F(s)|}{s}\right)s^{d-1}\,ds+F(1)\right),\label{estD2H}
 \end{align}
 and
 \begin{align}
 \Delta H=&
-\left(F'(|\cdot|)+(d-1)\frac{F(|\cdot|)}{|\cdot|}\right)\LL^d\mres (B_1\backslash\{0\}) +F(1){\cal H}^{d-1}\mres S_1,
\label{div_D2H}
\end{align}
\begin{align}
\left\|\Delta H\right\|_{\Mn}\leq&{\cal H}^{d-1}(S_1)\left(\int_0^1\left(|F'(s)|+(d-1)\frac{|F(s)|}{s}\right)s^{d-1}\,ds+F(1)\right)
;\label{div_estD2Hp}
 \end{align}
 for $d=1$, it holds that 
\begin{align}
 H''=-F'(|\cdot|)\LL^1\mres (-1,1)-F(0)\delta_0+F(1)(\delta_1+\delta_{-1})\nonumber
\end{align}
and 
\begin{align}
 \|H''\|_{\Mn}=&2\int_0^1|F'(s)|\,ds+F(0)+2F(1)
 .\nonumber%
\end{align}
 \end{enumerate}
\end{Lemma}
\begin{proof}
\begin{enumerate}
  \item
 Consider
 \begin{align*}
G(\tau):=-\int_0^{\tau}\one_{[0,1]}F(s)\,ds\qquad\text{for }\tau\geq0.     \end{align*}
Since $\one_{[0,1]}F\in L^{\infty}(\Rpositive)\cap C(\Rpositive\backslash\{1\})$, we have $G\in W^{1,\infty}(\R)$  and 
\begin{align*}
 G'=-\one_{[0,1]}F\qquad\text{in }\Rpositive\backslash\{1\}.
\end{align*}
Being a superposition of two $W^{1,\infty}$ functions, $G$ and $|\cdot|$, $H$ is then also a $W^{1,\infty}$ function. Using the chain rule and the fact that \begin{align*}
\nabla |x|=\sign(x) \qquad\text{for }x\neq0,                                                                                                                                                                                                        \end{align*}
 we obtain \cref{DerH}. Equality \cref{nHnormi} is an evident consequence of \cref{DerH}. As to \cref{nHnorm1}, it follows from \cref{DerH} by direct computation and the fact that 
 \begin{align}
  \int_{S^1}|x|_1\,dx=&2d\int_{S^1\cap\{x_1>0\}}x_1\,dx\nonumber\\
  =&2d\left|B^{d-1}_1\right|.\nonumber  
 \end{align}

 \item We only consider the case $d\geq2$. Similar arguments work for $d=1$. Calculations are, in fact, simpler in that case. 
 
 Observe that $\nabla H$ is a product of three functions which are locally of bounded variation (BV). Indeed, since $\one_{[0,1]}(|\cdot|)=\one_{\overline{B_1}}$, i.e. is the characteristic function of a set with a smooth boundary of a finite perimeter, $\one_{[0,1]}(|\cdot|)$ is of BV. Hence, its distributional gradient is a finite measure. The gradient consists only of the jump part:
 \begin{align}
  \nabla\one_{[0,1]}(|\cdot|)=-\sign{\cal H}^{d-1}\mres S_1.\label{DerChar}
 \end{align}
This can be obtained, e.g. by using the divergence theorem: 
\begin{align}
 \left<\nabla\one_{[0,1]}(|\cdot|),\varphi\right>=&-\int_{\overline{B_1}}\nabla\cdot\varphi\,dx\nonumber\\
 =&-\int_{S_1}\varphi\cdot x\,d {\cal H}^{d-1}\qquad\text{for }\varphi\in (C_c^1(\R^d))^d.\nonumber
\end{align}
As to $x\mapsto \frac{x}{|x|}$, this function has a locally integrable derivative given by
\begin{align}
 \nabla^T\sign(x)=\frac{1}{|x|}\left(I_d-\sign\sign^T\right)(x)\qquad\text{for }x\neq0.\label{Dersign}
\end{align} 
Hence, it is also locally of BV. Finally, this is also true for $-F(|\cdot|)$ as it is a composition of $C^1$ and $W^{1,\infty}$ functions and as such is locally $W^{1,\infty}$. 

 Further, since $\supp(\nabla H)\subset \overline{B_1}$,  $\nabla\nabla^TH$ is also supported in $\overline{B_1}$. 
Having checked that $\nabla H$ is a product of functions of BV locally, we can apply the product rule for BV functions (a consequence of the chain rule for a smooth outer function, see, e.g. \cite[Chapter 3, \S3.10, Theorem 3.96]{Ambrosio}) in a bounded open ball which contains $\overline{B_1}$. Combined with \cref{DerChar,Dersign}, this theorem yields $\nabla H\in (BV(\R^d))^{{d}}$ and formula \cref{D2H}. Formula \cref{div_D2H} for the divergence of $H$ follows by computing the trace of the density matrices on the right-hand side of \cref{D2H}.

Finally, \cref{div_estD2Hp,estD2H} are  easily obtained by direct calculation from \cref{D2H,div_D2H}, respectively, using standard properties of finite measures. We omit further details.

 \end{enumerate}
 
\end{proof}

\begin{Lemma}\label[Lemma]{LemV1}
Let assumptions of \cref{LemH} be satisfied. 
Let $\V$ be as defined in \cref{DefVBil} and $O\subset\R^d$ be a domain. Then, 
\begin{align}
\V\in &B(\LInfo\times\LOnen;(\LOneo)^d)\nonumber\\
&\cap B(\LInfo\times\LOnen;(\LInfo)^d)\nonumber\\
\nabla\cdot (V(\cdot,\cdot))\in& B(\WInfo\times \LOnen;\LOneo)\nonumber\\
& \cap B(\WInfo\times \LInfn;\LInfo), \nonumber                                                                                                    \end{align}
and 
\begin{subequations}\label{estV}
\begin{align}
\|\V(w,u)\|_{(\LOneo)^{d}}\leq &\Cr{CF_1}\|w\|_{\LInfo}\|u\|_{\LOnen},\label{estV11}\\
 \|\V(w,u)\|_{(\LInfo)^{d}}\leq &\Cr{CF_2}\|w\|_{\LInfo}\|u\|_{\LOnen},\label{estV1}\\
 \|\nabla\cdot(\V(w,u))\|_{\LOneo}\leq &\Cr{CF_3}\|w\|_{\WInfo}\|u\|_{\LOnen},\label{estV21}\\
 \|\nabla\cdot(\V(w,u))\|_{\LInfo}\leq &\Cr{CF_4}\|w\|_{\WInfo}\|u\|_{\LInfn},\label{estV22}
\end{align}
\end{subequations}
where
 \begin{align}
 \Cl{CF_1}:=&\|\nabla H\|_{(\LOnen)^d},\nonumber\\
 \Cl{CF_2}:=&\|\nabla H\|_{(\LInfn)^d},\label{DefCF_2}\\
 \Cl{CF_3}:=&\|\nabla H\|_{(\LOnen)^d}+\|\Delta H\|_{\Mn},\nonumber\\
  \Cl{CF_4}:=&\|\nabla H\|_{(\LInfn)^d}+\|\Delta H\|_{\Mn}.\nonumber
 \end{align}
\end{Lemma}
\begin{proof}
Bilinearity is obvious, so we only verify that $V$  and $\nabla\cdot(V(\cdot,\cdot))$ are well-defined continuous maps and satisfy the required estimates. 

We first study the term $\nabla H\star u$ and its divergence for $u \in L^1_{loc}(\R^d)$ using the properties of the derivatives of $H$ that we established in \cref{LemH}. We may assume that $u$ is Borel. Indeed, otherwise we can redefine it on a set of Lebesgue measure zero to obtain a Borel representative. This would not change the values of $\nabla H\star u$. 
Since $\nabla H\in \LInfn$ and is compactly supported, $\nabla H\star u$ is well-defined. Further, since $\Delta H\in \Mn$ and is compactly supported, formula (4.2.5) from \cite[Chapter 4]{HoermanderI} can be applied, yielding   
\begin{align}
 \nabla\cdot(\nabla H\star u)=\Delta H\star u\label{D2Hu}
\end{align}
 in the distributional sense. 

In view of $\nabla H\in\LOnen\cap\LInfn$,  Young's inequality for convolutions  implies that $\nabla H\star u\in\LOnen\cap\LInfn$ for $u\in\LOnen$ or $u\in\LInfn$, and
\begin{subequations}\label{estD1HL}
\begin{align}
 \|\nabla H\star u\|_{(\LOnen)^{{d}}}\leq& \|\nabla H\|_{(\LOnen)^d} \|u\|_{\LOnen},\label{estD1HL1}\\
 \|\nabla H\star u\|_{(\LInfn)^{{d}}}\leq& \|\nabla H\|_{(\LInfn)^d} \|u\|_{\LOnen},\label{estD1HL2}\\
 \|\nabla H\star u\|_{(\LInfn)^{{d}}}\leq& \|\nabla H\|_{(\LOnen)^d} \|u\|_{\LInfn}.\label{estD1HL3}
\end{align}
\end{subequations}
Next, since $u$ is Borel and $\Delta H$ is a finite vector measure,   \cite[Proposition 3.9.9]{Bogachev} applies and yields $\Delta H\star u\in \LOnen$ for $u\in \LOnen$ and $\Delta H\star u\in \LInfn$ for $u\in \LInfn$, and
\begin{subequations}\label{estD2HL}
\begin{align}
\|\Delta H\star u\|_{\LOnen}\leq& \|\Delta H\|_{\Mn}\|u\|_{\LOnen}, \label{estDiv1}\\
 \|\Delta H\star u\|_{\LInfn}\leq& \|\Delta H\|_{\Mn}\|u\|_{\LInfn}. \label{estDiv2}
\end{align}
\end{subequations}

Now we return to the original bilinear maps. First, assume  $w\in\LInfo$. Then, the product $w(\nabla H\star u)$ is a well-defined function.  Combining \cref{estD1HL1,estD1HL2} with H\"older's inequality, directly yields estimates \cref{estV11,estV1}, respectively.

Let $w\in \WInfo$. 
Due to the chain rule and \cref{D2Hu}, the divergence of $\nabla H\star u$ is well-defined and satisfies
\begin{align}
 \nabla\cdot(w(\nabla H\star u))=&w(\Delta H\star u)+\nabla w\cdot (\nabla H\star u)\qquad\text{a.e. in }O.\label{chainH2u}
\end{align}
Estimating the right-hand side of \cref{chainH2u} using H\"older's inequality and \cref{estDiv1,estD1HL1} (\cref{estDiv2,estD1HL3}), we finally  arrive at \cref{estV21} (\cref{estV22}).

\end{proof}

\begin{Lemma}\label[Lemma]{Lem-Vop}
Let assumptions of \cref{LemH} be satisfied. 
Let $\V$ be as defined in \cref{DefVBil}, $O\subset\R^d$ be a domain, and $T>0$.
Then, 
\begin{align}
\V\in &B(L^{\infty}(\EO)\times L^{\infty,1}(E_T);(L^{\infty,1}(\EO))^d)\nonumber\\
&\cap B(L^{\infty}(\EO)\times L^{\infty,1}(E_T);(L^{\infty}(\EO))^d)\nonumber\\
\nabla\cdot (V(\cdot,\cdot))\in & B(L^{\infty}(0,T;\WInfo)\times L^{\infty,1}(E_T);(L^{\infty}(0,T;\LOneo))^d)\nonumber\\
&\cap B(L^{\infty}(0,T;\WInfo)\times L^{\infty}(E_T);(L^{\infty}(0,T;\LInfo))^d),   \nonumber                                                                                                  \end{align}
and 
\begin{subequations}\label{estVT}
\begin{align}
\|\V(w,u)\|_{(L^{\infty,1}(\EO))^{{d}}}\leq &\Cr{CF_1}\|w\|_{L^{\infty}(\EO)}\|u\|_{L^{\infty,1}(E_T)},\label{estVTone}\\
 \|\V(w,u)\|_{(L^{\infty}(\EO))^{{d}}}\leq &\Cr{CF_2}\|w\|_{L^{\infty}(\EO)}\|u\|_{L^{\infty,1}(E_T)},\label{estVTinf}\\
 \|\nabla\cdot(\V(w,u))\|_{L^{\infty}(0,T;\LOneo)}\leq &\Cr{CF_3}\|w\|_{L^{\infty}(0,T;\WInfo)}\|u\|_{L^{\infty,1}(E_T)},\label{estVTnone}\\
 \|\nabla\cdot(\V(w,u))\|_{L^{\infty}(0,T;\LInfo)}\leq &\Cr{CF_4}\|w\|_{L^{\infty}(0,T;\WInfo)}\|u\|_{L^{\infty}(E_T)},\label{estVTninf}
\end{align}
\end{subequations}
where $C_i$s are as in \cref{LemV1}.
\end{Lemma}
\begin{proof}
 By \cref{LemV1}, $\V$ is a bounded bilinear map between the corresponding time-independent spaces. Hence it is well-defined, bilinear, and bounded as a mapping between the considered  time-dependent spaces and  preserves strong measurability. {Estimates} \cref{estVT} are a direct consequence of the corresponding bounds \cref{estV}. 
\end{proof}
For later use, we define 
\begin{align}
 \Cl{CF_All}:=&\max\{\Cr{CF_3},\Cr{CF_4}\}.\label{labelCF_All}
\end{align}
\subsection{Well-posedness of  {\texorpdfstring{\cref{PLnewIBVP}}{}} {and non-explosion of support}}\label{SubSecuEqWP}
In this Subsection, we establish several results for  the Cauchy problem   \cref{PLnewIBVP} for a fixed $w$. They include local well-posedness  as well as non-explosion of support. Although  \cref{thmmain} requires  compactly supported $u_0$, here we allow for more general initial data. 
\begin{Notation}
To simplify notation, unlike in \cref{Sec:FixedAdv},  we explicitly mention only the parameters $T$ and $S$ for the constants/mappings $C_i$ and sets $U_i$ and $W_i$  introduced in this Subsection.
\end{Notation}
\begin{Theorem}[Local well-posedness of \cref{PLnewIBVP}]\label{Lem:uEqn_FP}
Let \cref{H,Asschi,AssF}
be satisfied and $m,m_{\infty}>0$ be some numbers. For $T\in(0,1]$ and $S>0$ define 
\begin{subequations}
\begin{align}
 \Cl{C-D1}(S):=&2\Cr{CF_All}\|\chi\|_{L^\infty(\Rpositive)}m_{\infty}S,\label{l_C-D1}\\
 \Cl{C-D1_}(S):=&\frac{8}{3}\Cr{CF_All}\|\chi\|_{C_b^1([0,\infty))}\max\left\{1,m^{\frac{1}{2}}m_{\infty}^{\frac{1}{2}}e^{\frac{1}{2}T\Cr{Ce1}\|\chi'\|_{L^{\infty}((0,\infty))}^2\Cr{CF_All}^2m^2m_{\infty}e^{T\Cr{C-D1}(S)}}\right\}\nonumber\\
 &\cdot\max\left\{m_{\infty}e^{T\Cr{C-D1}(S)},m_{\infty}e^{\frac{1}{2}T\Cr{C-D1}(S)}\right\}
,\label{l_C-D1_}\\
\Cl{DULip0}(T,S):=&\frac{1}{1-T^{\frac{1}{2}}S\Cr{C-D1_}(S)},\label{l_DULip0}\\
\Cl{DULip}(T,S):=&\frac{T^{\frac{1}{2}}\Cr{C-D1_}(S)m}{1-T^{\frac{1}{2}}S\Cr{C-D1_}(S)},\label{l_DULip}\\
\Cl[T]{T-ueqnFP}(S):=&\min\left\{1,\frac{\ln 2}{\Cr{C-D1}(S)},\frac{1}{\left(S\Cr{C-D1_}(S)\right)^2}\right\}\label{l_T-ueqnFP}
\end{align}
\end{subequations}
and  
\begin{alignat}{3}
\Cl[U]{U0}:=&\{0\leq u_0\in \LInfn\cap\LOnen:&&\quad \|u_0\|_{\LOnen}\leq m,\quad \|u_0\|_{\LInfn}\leq m_{\infty}\},\nonumber\\
\Cl[U]{SU1}(T):=& \{0\leq u\in L^{\infty,1}(E_T)\cap L^{\infty}(E_T):&&\quad \|u\|_{\LOnen}\leq m, \quad\|u\|_{\LInfn}\leq 2m_{\infty}\quad\text{ a.e. in }(0,T)
\},\label{domU}\\
 \Cl[W]{W1}(T,S):=&\{0\leq w\in L^{\infty}(0,T;\WInfn):&&\quad \|w\|_{L^{\infty}(0,T;\WInfn)}\leq S\},\nonumber%
\end{alignat}
where $\Cr{CF_All}$ is from \cref{labelCF_All} and $\Cr{Ce1}$ is as in \cref{constC19}. Then, if 
\begin{align}
 u_0\in \Cr{U0}\nonumber%
\end{align}
and 
\begin{subequations}\label{AssW1}
 \begin{align}
  &w\in \Cr{W1}(T,S)
 \end{align}
for some
\begin{align}
 &S>0,\\
 &0<T<\Cr{T-ueqnFP}(S),\label{l_T4}
\end{align}
\end{subequations}
then there exists a unique $u$ such that $(u,w)$ is a solution in $[0,T]\times\R^d$ to \cref{PLnewIBVP} (in terms of \cref{DefIPDE}). 

The solution map 
\begin{align}
  \Cl[UU]{OpU1}:\Cr{U0}\times\Cr{W1}(T,S)\to \Cr{SU1}(T),\qquad (u_0,w)\mapsto u,\nonumber%
\end{align}
is well-defined and Lipschitz continuous in the following sense:
\begin{align}
\|\Cr{OpU1}(u_0,w)-\Cr{OpU1}(\wu_0,\ww)\|_{L^{\infty,1}(E_T)}
 \leq
 &\Cr{DULip0}(T,S)\left\|u_0-\wu_0\right\|_{\LOnen}+\Cr{DULip}(T,S)\left\|w-\ww\right\|_{C([0,T];\WInfo)}\label{U-stability}
\end{align}
for any open $O\subset\R^d$ such that  
\begin{align}
 \left\{\Cr{OpU1}(u_0,w)>0\right\}\subset [0,T]\times \overline O.\label{SetO}
\end{align}
\end{Theorem}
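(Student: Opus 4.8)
The plan is to prove \cref{Lem:uEqn_FP} by setting up a contraction mapping on a suitable subset of $L^{\infty,1}(E_T)$, using the results on the auxiliary equation \cref{PMEdriftCauchy} from \cref{Sec:FixedAdv}. The key observation is that for fixed $w$, equation \cref{PLnew} is precisely \cref{PMEdrift} with the advection direction $V=\V(w,u)=w(\nabla H\star u)$ frozen at a given $u$. So I would define a map $\Phi$ sending $u\in\Cr{SU1}(T)$ to the unique solution $\widetilde u$ of \cref{PMEdriftCauchy} with $V=\V(w,u)$ and initial datum $u_0$, guaranteed by \cref{LemPMEDriftEx}. The fixed points of $\Phi$ are exactly the solutions of \cref{PLnewIBVP} for the given $(u_0,w)$ in the sense of \cref{DefIPDE}.

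First I would check that $\Phi$ is well-defined and maps $\Cr{SU1}(T)$ into itself. By \cref{Lem-Vop}, $\V(w,u)$ satisfies all the hypotheses \cref{AssV4} required by \cref{LemPMEDriftEx}, with the norms of $V$ controlled via \cref{estVT} by $m$, $m_\infty$, $S$, and the kernel constants $\Cr{CF_1},\dots,\Cr{CF_4}$. The bound \cref{estbndu} together with \cref{ue0binf}-type control on $\|u_0\|_{\LInfn}\le m_\infty$ gives $\|\widetilde u\|_{L^\infty(E_T)}\le\Cr{D1}(T,V,\chi,u_0)=m_\infty e^{T\Cr{C-D1}(S)}\le 2m_\infty$ precisely when $T\le \ln 2/\Cr{C-D1}(S)$, which is why that term appears in \cref{l_T-ueqnFP}. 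The mass conservation \cref{LOnepres} gives $\|\widetilde u\|_{\LOnen}\equiv\|u_0\|_{\LOnen}\le m$. So the self-map property holds for $T<\Cr{T-ueqnFP}(S)$. The regularity requirements of \cref{DefIPDE}(1)(a)--(g) for the pair $(\widetilde u, w\widetilde u)$ follow from \cref{DefPMEDrift} combined with $w\in L^\infty(0,T;\WInfn)$ and $\widetilde u$ bounded, so $v:=w\widetilde u\in L^\infty(0,T;\LInfn)$; the weak/very weak formulations of \cref{M_PLnewIBVP} match those of \cref{PMEdriftCauchy} term by term once $V=w(\nabla H\star \widetilde u)$ is substituted.

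The core step is the contraction estimate. Given $u_1,u_2\in\Cr{SU1}(T)$, let $\widetilde u_i=\Phi(u_i)$ with advection directions $V_i=\V(w,u_i)$; both solve \cref{PMEdriftCauchy} with the same $u_0$ but different $V_i$. I would apply the $L^1$-stability bound \cref{StabestL1} of \cref{LemStabL1}, which gives
\begin{align*}
\max_{[0,T]}\|\widetilde u_1-\widetilde u_2\|_{\LOnen}\le \Cr{D4}(T,\chi,V_1,u_0)\left(T\left\|\one_{\{\widetilde u_1>0\}}\nabla\cdot(V_1-V_2)\right\|_{L^{\infty,1}(E_T)}+T^{\frac12}\left\|\one_{\{\widetilde u_1>0\}}|V_1-V_2|\right\|_{L^\infty(E_T)\cap L^{\infty,1}(E_T)}\right).
\end{align*}
By bilinearity, $V_1-V_2=\V(w,u_1-u_2)$, so \cref{estVT} bounds $\|V_1-V_2\|$ and $\|\nabla\cdot(V_1-V_2)\|$ by $\Cr{CF_All}$ times $\|w\|_{L^\infty(0,T;\WInfn)}\le S$ times $\|u_1-u_2\|_{L^{\infty,1}(E_T)}$ (using $\|u_1-u_2\|_{\LInfn}\le 4m_\infty$ where the $L^\infty$ norm is needed, absorbed into constants). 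Tracking the dependence of $\Cr{D4}=\Cr{D4_1}\max\{\Cr{D1},\Cr{D1}^{1/2}\}$ through \cref{Dcontr}, \cref{l_C-D1_}, \cref{DsD1}, the prefactor becomes $\Cr{C-D1_}(S)$, so the estimate reads $\max_{[0,T]}\|\widetilde u_1-\widetilde u_2\|_{\LOnen}\le T^{1/2}S\Cr{C-D1_}(S)\|u_1-u_2\|_{L^{\infty,1}(E_T)}$, which is a strict contraction exactly when $T<1/(S\Cr{C-D1_}(S))^2$, the last term in \cref{l_T-ueqnFP}. Banach's fixed point theorem then yields a unique fixed point in $\Cr{SU1}(T)$; uniqueness among all solutions follows from \cref{LemPMEDriftEx-uniqueness} applied with $V$ frozen at the solution itself. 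For the Lipschitz bound \cref{U-stability}, I would run the same argument with two different data $(u_0,w)$ and $(\wu_0,\ww)$: applying \cref{StabestL1} one more time now produces an extra term $\|u_0-\wu_0\|_{\LOnen}$ and a term controlled by $\|w-\ww\|_{C([0,T];\WInfo)}$ (via $V-\widehat V=\V(w-\ww,\wu)+\V(\ww,u-\wu)$ and the restriction \cref{SetO} of the support to $\overline O$), and solving the resulting inequality for $\|u-\wu\|_{L^{\infty,1}(E_T)}$ gives the coefficients $\Cr{DULip0}(T,S)$ and $\Cr{DULip}(T,S)$ in \cref{l_DULip0}-\cref{l_DULip}.

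The main obstacle I anticipate is \emph{bookkeeping the constants}: every bound $\Cr{C-D1}(S)$, $\Cr{C-D1_}(S)$, $\Cr{DULip0}(T,S)$, $\Cr{DULip}(T,S)$, $\Cr{T-ueqnFP}(S)$ is a specific composite of the constants $\Cr{D1},\Cr{D2},\Cr{D4},\Cr{D4_1}$ from \cref{Sec:FixedAdv} and $\Cr{CF_1},\dots,\Cr{CF_All},\Cr{Ce1}$, and one must verify that the precise algebraic forms stated in \cref{l_C-D1}-\cref{l_T-ueqnFP} are exactly what the chain of estimates delivers, and in particular that $\Cr{D4}(T,\chi,V_1,u_0)$ can be bounded \emph{uniformly over $u_1\in\Cr{SU1}(T)$} (it depends on $V_1$ only through $\|V_1|\|_{L^\infty(E_T)}\le\Cr{CF_2}Sm$ and $\|(\nabla\cdot V_1)_-\|_{L^\infty(E_T)}\le\Cr{CF_4}Sm_\infty$, both of which are controlled). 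A secondary subtlety is making sure $\{\widetilde u_i>0\}\subset[0,T]\times\overline O$ so that the $\one_{\{\widetilde u_1>0\}}$-weighted norms in \cref{StabestL1} can be estimated on $O$ rather than all of $\R^d$ — but this is exactly the role of hypothesis \cref{SetO}, invoked only for the Lipschitz claim. None of this is conceptually hard; it is careful propagation of already-established estimates.
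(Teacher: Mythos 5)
Your proposal follows essentially the same route as the paper's proof: freeze the advection $V=\V(w,\bar u)$, solve \cref{PMEdriftCauchy} via \cref{LemPMEDriftEx}, show the resulting map is a self-map of $\Cr{SU1}(T)$ using the $L^\infty$ bound \cref{estbndu} with mass conservation \cref{LOnepres}, prove contraction on $\Cr{SU1}(T)$ via the $L^1$-stability estimate \cref{StabestL1} combined with bilinearity of $\V$ and the bounds \cref{estVT}, and then apply Banach's fixed point theorem, extracting \cref{U-stability} from the same stability estimate with varying data --- exactly the paper's argument, including the origin of the three restrictions in \cref{l_T-ueqnFP}. The one (shared) imprecision is the uniqueness claim beyond $\Cr{SU1}(T)$: your appeal to \cref{LemPMEDriftEx-uniqueness} with ``$V$ frozen at the solution itself'' does not literally cover two solutions inducing different advection fields, but the paper's own proof is no more explicit on this point, so this does not distinguish your argument from theirs.
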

\begin{proof}
Let $T$, $S$, and $w$  satisfy \cref{AssW1}, $u_0\in \Cr{U0}$, and $\bar{u}\in \Cr{SU1}(T)$. Then, \cref{AssV4} holds for  
$$V:=\V(\bar{u},w),$$ 
as demonstrated in \cref{Lem-Vop}. Let $u$ be the corresponding solution to \cref{PMEdriftCauchy}  (in terms of \cref{DefPMEDrift}). Its existence and uniqueness is established in \cref{LemPMEDriftEx}. Further, since $u_0\in \LOnen$ is assumed, \cref{LemContiL1} implies that $u$ has the regularity as required in \cref{DefIPDE}. Set $$\Cl[UU]{OpUw}(u_0,w,\bar{u}):=u.$$
 We prove that $\Cr{OpUw}(u_0,w,\cdot)$ is a self map and a contraction in $\Cr{SU1}$.
 To start, we verify that $\Cr{OpUw}(u_0,w,\cdot)$ maps into a bounded subset of $\LInfn$. From estimates \cref{estbndu} and \cref{estVTninf} we obtain 
\begin{align}
\|u\|_{L^\infty(E_T)}\leq& \Cr{D1}(T,\chi,\V(w,\bar{u}),u_0)\nonumber\\
\leq& m_{\infty} e^{T\|\chi\|_{L^\infty(\Rpositive)}\|\nabla\cdot \V(w,\bar{u})\|_{L^{\infty}(E_T)}} \nonumber\\
\leq& m_{\infty} e^{T\Cr{CF_All}\|\chi\|_{L^\infty(\Rpositive)}\|w\|_{L^{1,\infty}(0,T;\WInfn)}\|\bar{u}\|_{L^{\infty}(E_T)}}\nonumber\\
\leq&  m_{\infty} e^{2T\Cr{CF_All}\|\chi\|_{L^\infty(\Rpositive)}m_{\infty}S}\nonumber\\
=&m_{\infty}e^{T\Cr{C-D1}(S)},\label{uEqnFP:selfmap}
\end{align}
with $\Cr{C-D1}$ as defined in \cref{l_C-D1}. Next, we establish a stability property for $\Cr{OpUw}$. 
For $u_0,\wu_0\in\Cr{U0}$, $w,\ww\in \Cr{W1}(T,S)$, and $u,\wu\in {\Cr{SU1}(T)}$ we have the following estimate due to  \cref{LOnepres,StabestL1,estVT} and bilinearity of $\V$:
\begin{align}%
&\left\|\Cr{OpUw}(u_0,w,u)-\Cr{OpUw}(\wu_0,\ww,\wu)\right\|_{L^{\infty,1}(E_T)}-\left\|u_0-\wu_0\right\|_{\LOnen}\nonumber\\ 
\leq & \Cr{D4}(T,\chi,\V(w,u),u_0)\nonumber\\
&\cdot\left(T\left\|\one_{\{u>0\}}\nabla\cdot(\V(w,u)-\V(\ww,\wu))\right\|_{L^{\infty,1}(E_T)}+T^{\frac{1}{2}}\left\|\one_{\{u>0\}}|\V(w,u)-\V(\ww,\wu)|\right\|_{L^\infty(E_T)\cap L^{\infty,1}(E_T)}\right) \nonumber\\
\leq & T^{\frac{1}{2}}\Cr{D4}(\dots)\nonumber\\&\cdot\left(\|\nabla\cdot(\V(w,u-\wu))\|_{L^{\infty,1}(E_T)}+\left\|\one_{\{u>0\}}\nabla\cdot(\V(w-\ww,\wu)\right\|_{L^{\infty,1}(E_T)}\right.\nonumber\\
&\left.\ \ \ +\||\V(w,u-\wu)|\|_{L^\infty(E_T)\cap L^{\infty,1}(E_T)}+\left\|\one_{\{u>0\}}|\V(w-\ww,\wu)|\right\|_{L^\infty(E_T)\cap L^{\infty,1}(E_T)}\right) \nonumber\\
\leq & 2T^{\frac{1}{2}}\Cr{CF_All}\Cr{D4}(\dots)\nonumber\\
&\cdot\left(\|w\|_{L^{1,\infty}(0,T;\WInfn)}\|u-\wu\|_{L^{\infty,1}(E_T)}+m\left\|w-\ww\right\|_{C([0,T];\WInfo)}\right)\nonumber\\
\leq &2T^{\frac{1}{2}}\Cr{CF_All}\Cr{D4}(\dots)S\|u-\wu\|_{L^{\infty,1}(E_T)}+2T^{\frac{1}{2}}\Cr{CF_All}\Cr{D4}(\dots)m\left\|w-\ww\right\|_{C([0,T];\WInfo)}, \label{U1stab_} 
\end{align} 
where $O\subset\R^d$ is any open set such that
\begin{align}
 \{u>0\}\subset (0,T)\times O.\nonumber
\end{align} 
We can further estimate $2\Cr{CF_All}\Cr{D4}(\dots)$ using \cref{l_D4,uEqnFP:selfmap,estVTinf} and the interpolation inequality for $L^p$ spaces:
\begin{align}
2\Cr{CF_All}\Cr{D4}(T,\chi,\V(w,u),u_0)
=&2\Cr{CF_All}\Cr{D4_1}\max\left\{\Cr{D1},\Cr{D1}^{\frac12}\right\}\left(T,\chi,\V(w,u),u_0\right)\nonumber\\
\leq& \Cr{C-D1_}(S),\label{estwithD6}
\end{align}
with $\Cr{C-D1_}$ as defined in \cref{l_C-D1_}. 
Combining \cref{estwithD6,U1stab_}, we obtain
\begin{align}
 &\|\Cr{OpUw}(u_0,w,u)-\Cr{OpUw}(\wu_0,\ww,\wu)\|_{L^{\infty,1}(E_T)}\nonumber\\
 \leq&T^{\frac{1}{2}}S\Cr{C-D1_}(S)\|u-\wu\|_{L^{\infty,1}(E_T)}+T^{\frac{1}{2}}\Cr{C-D1_}(S)m\left\|w-\ww\right\|_{C([0,T];\WInfo)}+\left\|u_0-\wu_0\right\|_{\LOnen}. \label{U1stab}
\end{align}

Altogether, we see from \cref{U1stab,uEqnFP:selfmap,domU,LOnepres} that $\Cr{OpUw}(u_0,w,\cdot)$ is a self-map and a contraction in $\Cr{SU1}(T)$ for any $u_0\in \Cr{U0}$ and 
\begin{align}
 0<T<\Cr{T-ueqnFP}(S),\nonumber%
\end{align}
with $\Cr{T-ueqnFP}$ as defined in \cref{l_T-ueqnFP}.  Equipped with the metric generated by the $\|\cdot\|_{L^{\infty,1}(E_T)}$, $\Cr{SU1}(T)$ is a complete metric space. Hence, we can apply Banach's fixed point theorem, which produces the unique solution to \cref{PLnewIBVP} in $E_T$ (in the sense of \cref{DefIPDE}). We define
\begin{align}
  \Cr{OpU1}:\Cr{U0}\times\Cr{W1}(T,S)\to \Cr{SU1}(T),\qquad\Cr{OpU1}(u_0,w):=u,\nonumber
\end{align}
where $u$ is the first component of the solution corresponding to $u_0$ and $w$. 

It remains to verify \cref{U-stability}. Plugging  $u=\Cr{OpU1}(u_0,w)$ and $\wu=\Cr{OpU1}(\wu_0,\ww)$,  into \cref{U1stab}, we obtain
\begin{align}
 \|\Cr{OpU1}(u_0,w)-\Cr{OpU1}(\wu_0,\ww)\|_{L^{\infty,1}(E_T)}
 \leq&T^{\frac{1}{2}}S\Cr{C-D1_}(S)\|\Cr{OpU1}(u_0,w)-\Cr{OpU1}(\wu_0,\ww)\|_{L^{\infty,1}(E_T)}\nonumber\\
 &+T^{\frac{1}{2}}\Cr{C-D1_}(S)m\left\|w-\ww\right\|_{C([0,T];\WInfo)}+\left\|u_0-\wu_0\right\|_{\LOnen}, \label{U1stab_2}
\end{align}
for $O$ as in \cref{SetO}. 
Since $T^{\frac{1}{2}}S\Cr{C-D1_}(S)<1$, \cref{U1stab_2} yields
\begin{align}
 \|\Cr{OpU1}(u_0,w)-\Cr{OpU1}(u_0,\ww)\|_{L^{\infty,1}(E_T)}
 \leq
 &\frac{T^{\frac{1}{2}}\Cr{C-D1_}(S)m}{1-T^{\frac{1}{2}}S\Cr{C-D1_}(S)}\left\|w-\ww\right\|_{C([0,T];\WInfo)}\nonumber\\
 &+\frac{1}{1-T^{\frac{1}{2}}S\Cr{C-D1_}(S)}\left\|u_0-\wu_0\right\|_{\LOnen},\nonumber
\end{align}
so \cref{U-stability} holds.
\end{proof}
Our next result ensures that the support of a solution to \cref{PLnewIBVP} does not explode over small times.
\begin{Theorem}[Support control for \cref{PLnewIBVP}]\label{IPDEsupp}
 Let the assumptions of \cref{Lem:uEqn_FP} be satisfied. Define 
 \begin{align*}
 \Cl{D_CF2}(S):=&S\frac{1}{4(d+2)}\Cr{CF_4}\|\chi\|_{L^{\infty}(\Rpositive)}m_{\infty},\\
  \Cl{rhoab}(t,S):=&e^{t4\max\left\{\frac{m_{\infty}}{\bb ^2-(\delta \rho)^2},1+\frac{\Cr{D_CF2}(S)}{1-\frac{b}{\Cr{D_rho-u0V}}}\right\}},
 \end{align*}
 and
 \begin{align}
  \Cl[Rho]{D_rho-u0V}:=\frac{d+2}{m\|\chi'\|_{L^{\infty}(\Rpositive)}\|F\|_{L^{\infty}(0,1)}}\nonumber%
 \end{align}
 and let
 \begin{subequations}\label{Assumpu01Rd1}
 \begin{align}
 &u_0\in\Cr{U0},\\
  &\supp(u_0)\subset \overline{B_{\frac{1}{4}\rho}}\qquad \text{for some }\rho\in(0,\Cr{D_rho-u0V}].\label{u0suppcond}
 \end{align}
 \end{subequations}
 Then, there exists a
function
$$\Cr{Tsuppu}
 :(0,\infty)\to(0,\Cr{T-ueqnFP}(S)],$$
with the following property: if 
\begin{subequations}\label{AssSTRo}
 \begin{align}
  w\in \Cr{W1}(T,S)\qquad\text{and}\qquad 0\leq w\leq1\text{ in }[0,T]\times\R^d%
 \end{align}
for some 
\begin{align}
&S>0,\\
&0<T<\Cr{Tsuppu}(S),
 \end{align}
 \end{subequations}
 then the $u$-component of the corresponding unique solution to \cref{PLnewIBVP} in $[0,T]\times\R^d$ satisfies for all $t\in[0,T]$:
 \begin{subequations}\label{compsupp1} 
\begin{align}
&\supp(u(t,\cdot))\subset \overline{B_{\frac{\rho}{2}\Cr{rhoab}(t,S)}},\\
&0<\Cr{rhoab}(t,S)\leq \Cr{rhoab}(T,S)<2.
 \end{align}
\end{subequations}
Here  $\Cr{CF_4}$ is from \cref{LemV1} and  $\Cr{W1}$ and $\Cr{T-ueqnFP}$ are from  \cref{Lem:uEqn_FP}.

\end{Theorem}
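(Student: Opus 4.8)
The plan is to apply \cref{Lem:support} to the $u$-component of the solution produced by \cref{Lem:uEqn_FP}, viewing it as a (super)solution to the Cauchy problem \cref{PMEdriftCauchy} with the advection direction $V:=\V(w,u)=w(\nabla H\star u)$ frozen at the fixed point. First I would check the structural hypotheses: since $w\in\Cr{W1}(T,S)$ and $u=\Cr{OpU1}(u_0,w)\in\Cr{SU1}(T)$, \cref{Lem-Vop} yields that $V$ satisfies \cref{AssV4}; and $u_0\in\Cr{U0}$ with $\supp(u_0)\subset\overline{B_{\rho/4}}$ gives $0\le u_0\in\LInfn\cap\LTwon\cap\LOnen$, i.e. \cref{Assumpu01}. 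Thus \cref{Lem:support} is available once its free parameters are fixed, and I would choose $\delta:=\tfrac14$, $b:=\tfrac\rho2$ (so that $\supp(u_0)\subset\overline{B_{\delta\rho}}$ and $b\in(\delta\rho,\rho)$), and $a:=\max\left\{\tfrac{m_\infty}{b^2-(\delta\rho)^2},\,1+\tfrac{\Cr{D_CF2}(S)}{1-b/\Cr{D_rho-u0V}}\right\}$; with this choice $4a$ is exactly the exponential rate in $\Cr{rhoab}$, so $\rho_{a,b}(t)=b\,e^{4at}=\tfrac\rho2\,\Cr{rhoab}(t,S)$ matches the asserted support radius.

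The core of the argument is to verify the quantitative conditions \cref{Assb}, \cref{AssA}, \cref{AssA2} of \cref{Lem:support} by controlling the $V$-dependent constants uniformly in $T$. Using $0\le w\le1$, the identity $\|\nabla H\|_{(\LInfn)^d}=\|F\|_{L^\infty(0,1)}$ from \cref{LemH}, the bound \cref{estVTinf}, and $\|u(t,\cdot)\|_{\LOnen}=\|u_0\|_{\LOnen}\le m$ from \cref{LOnepres}, I get $\||V|\|_{L^\infty(E_T)}\le m\|F\|_{L^\infty(0,1)}$, hence $\Cr{rho-u0V}(T,\chi,V)\ge\Cr{D_rho-u0V}\ge\rho>b$; this gives \cref{Assb} and $0<\rho\le\Cr{rho-u0V}(T,\chi,V)$. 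For $\Cr{Dsupp1}(T,\chi,V)$ I would use $\|(\nabla\cdot V)_-\|_{L^\infty(E_T)}\le\|\nabla\cdot V\|_{L^\infty(E_T)}\le\Cr{CF_4}\,S\,\|u\|_{L^\infty(E_T)}$ (from \cref{estVTninf}) together with the self-map bound $\|u\|_{L^\infty(E_T)}\le m_\infty e^{T\Cr{C-D1}(S)}$ established in the proof of \cref{Lem:uEqn_FP}, so that $\Cr{Dsupp1}(T,\chi,V)\le\Cr{D_CF2}(S)\,e^{T\Cr{C-D1}(S)}$. Since $\Cr{rho-u0V}(T,\chi,V)\ge\Cr{D_rho-u0V}$ also gives $1-b/\Cr{rho-u0V}(T,\chi,V)\ge1-b/\Cr{D_rho-u0V}>0$, the strict inequality \cref{AssA} follows once $a>\tfrac{\Cr{D_CF2}(S)\,e^{T\Cr{C-D1}(S)}}{1-b/\Cr{D_rho-u0V}}$; writing $y:=\tfrac{\Cr{D_CF2}(S)}{1-b/\Cr{D_rho-u0V}}$ and using $a\ge1+y$, this holds whenever $e^{T\Cr{C-D1}(S)}<1+1/y$, i.e. for $T$ below an explicit positive threshold depending only on the fixed data. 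Condition \cref{AssA2} is immediate from $\|u_0\|_{\LInfn}\le m_\infty$ and $a\ge\tfrac{m_\infty}{b^2-(\delta\rho)^2}$.

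It then remains to define $\Cr{Tsuppu}(S)$ as the minimum of $\Cr{T-ueqnFP}(S)$, the threshold just mentioned, the time $\tfrac{\ln2}{4a}$ guaranteeing $\Cr{rhoab}(T,S)=e^{4aT}<2$, and a uniform positive lower bound for $\Cr{T-usuppV}(T,\chi,V,\rho,a,b)$. The last is available because, in the admissible range of $T$, the same estimates give $\Cr{rho4}(T,\chi,V,a)=\bigl(1-\Cr{Dsupp1}(T,\chi,V)/a\bigr)\Cr{rho-u0V}(T,\chi,V)>b$, whence $\tfrac1b\min\{\rho,\Cr{rho4}(T,\chi,V,a)\}>1$ and $\Cr{T-usuppV}=\tfrac1{8a}\ln\bigl(\tfrac1b\min\{\rho,\Cr{rho4}\}\bigr)$ is bounded below by a positive constant depending only on $S,m,m_\infty,\rho,\chi,F,d$. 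For $T<\Cr{Tsuppu}(S)$ we then have $T<\Cr{T-usuppV}$, so \cref{Lem:support} applied to $u$ yields $\supp(u(t,\cdot))\subset\overline{B_{\rho_{a,b}(t)}}=\overline{B_{(\rho/2)\Cr{rhoab}(t,S)}}$ for $t\in[0,T]$, with $t\mapsto\Cr{rhoab}(t,S)$ increasing and $\Cr{rhoab}(T,S)<2$ — that is, \cref{compsupp1}.

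I expect the main obstacle to be the bookkeeping around the $T$-dependence of $V$: one must confirm that $\Cr{rho-u0V}(T,\chi,V)$, $\Cr{Dsupp1}(T,\chi,V)$, $\Cr{rho4}(T,\chi,V,a)$, and hence $\Cr{T-usuppV}$, are all controlled by the fixed data alone, uniformly for $T$ below the chosen threshold, so that $\Cr{Tsuppu}(S)$ can genuinely be fixed independently of $T$, $w$, and $u_0$. A secondary, routine point is passing from the half-open time interval in the statement of \cref{Lem:support} to the closed interval $[0,T]$, which is handled by the strict inequality $T<\Cr{T-usuppV}$, the continuity of $t\mapsto\rho_{a,b}(t)$, and the fact that the comparison $u\le U_{a,b}$ in the proof of \cref{Lem:support} already holds on the closed interval.
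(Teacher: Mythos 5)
Your proposal is correct and is essentially the paper's own argument: apply \cref{Lem:support} to the fixed-point solution with $V=\V(w,u)$, take $\delta=\tfrac14$, $b=\tfrac{\rho}{2}$ and the same $a$, verify the admissibility conditions \cref{Assb,AssA,AssA2} via \cref{Lem-Vop}, \cref{LOnepres} and \cref{nHnormi}, and choose $\Cr{Tsuppu}(S)$ as a minimum guaranteeing $T<\Cr{T-usuppV}$ and $e^{4aT}<2$. The only difference is bookkeeping: the paper bounds $\Cr{Dsupp1}(T,\chi,V)$ by the $T$-independent constant $\Cr{D_CF2}(S)$ and so writes $\Cr{Tsuppu}(S)$ in one closed formula, whereas you carry the factor $e^{T\Cr{C-D1}(S)}$ and compensate with an extra smallness condition on $T$ — which works, but note that your uniform positive lower bound on $\Cr{T-usuppV}$ requires a quantitative margin (e.g. $e^{T\Cr{C-D1}(S)}\le 1+\tfrac{1}{2y}$, giving $\Cr{rho4}\ge b\bigl(1+\tfrac{1}{2a}\bigr)$), not merely the strict inequality $\Cr{rho4}>b$, a fixable point you already flag.
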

\begin{proof}
 Let 
$T$, $S$, and $w$  satisfy \cref{AssSTRo,l_T4}. Then, \cref{Lem:uEqn_FP} provides a unique solution with the first component $u\in \Cr{SU1}(T)$ corresponding to such $u_0$ and $w$. Set
 $$V:=\V(u,w).$$
 Combining  \cref{estVTninf,estVTinf,LOnepres,nHnormi,DefCF_2}, we obtain for the functions defined in \cref{Lem:support}:
\begin{align}
 \Cr{Dsupp1}(T,\chi,V)\leq &\Cr{D_CF2}(S),\nonumber\\
 \Cr{rho-u0V}(T,\chi,V)\geq&\Cr{D_rho-u0V},\nonumber\\
\Cr{rho4}\left(\chi,V,a\right)\geq&\left(1-\frac{\Cr{D_CF2}(S)}{a}\right)\Cr{D_rho-u0V}\nonumber\\
=:&\Cl[Rho]{D_rho4}(S,a).\nonumber
\end{align}
This leads to the following sufficient conditions on $a$, $b$, $\delta$, and $\rho$:
\begin{subequations}\label{abdr}
 \begin{align}
 &0<\rho\leq\Cr{D_rho-u0V},\label{assrho1}\\
 &\delta\in(0,1),\\
 &\bb \in (\delta\rho,\rho),\label{_Assb}\\
 &\aa>\frac{\Cr{D_CF2}(S)}{1-\frac{b}{\Cr{D_rho-u0V}}},\label{_AssA}\\
 &\aa\geq \frac{m_{\infty}}{\bb ^2-(\delta \rho)^2}.\label{_AssA2}
 \end{align}
\end{subequations}
By assumption \cref{u0suppcond} we have \cref{assrho1} satisfied.  Take
\begin{align*}
 \delta:=&\frac{1}{4},\\
 b:=&\frac{1}{2}\rho,\\
 a:=&
\max\left\{\frac{m_{\infty}}{\bb ^2-(\delta \rho)^2},1+\frac{\Cr{D_CF2}(S)}{1-\frac{b}{\Cr{D_rho-u0V}}}\right\},
\end{align*}
so that all conditions \cref{abdr} are met. 
Finally, set
\begin{align}
 \Cl[T]{Tsuppu}(S)
 :=&
\min\left\{\Cr{T-ueqnFP}(S),\frac{1}{8\aa}\ln\left(\frac{1}{b}\min\left\{\rho,\Cr{D_rho4}(S,a)\right\}\right)\right\}
.\nonumber
 \end{align}
Now \cref{Lem:support} applies and yields \cref{compsupp1}.

\end{proof}
The next Theorem provides a bound that controls the solution change in KR norm over small times.
\begin{Theorem}[Local stability in ${\mathcal M}_{KR}$ for \cref{PLnewIBVP}]\label{IPDEconti0}
 Let the assumptions of \cref{IPDEsupp} be satisfied. 
 Then, there exists a
function
$$\Cl{D7}:(0,\infty)\to(0,\infty)$$ with the following property: 
if $T,S$, and $w$ satisfy \cref{AssSTRo}, 
 then the $u$-component of the corresponding unique solution to \cref{PLnewIBVP} in $[0,T]\times\R^d$ satisfies
 \begin{align}
   \|u(t,\cdot)-u_0\|_{\MKB}\leq t^{\frac{1}{2}}\Cr{D7}(S)\qquad\text{for all }t\in[0,T].\label{estuconti}
 \end{align}.
 \end{Theorem}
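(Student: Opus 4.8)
The plan is to derive \cref{estuconti} by combining the time-regularity estimate for the PM-diffusion-advection Cauchy problem from \cref{LemContiL1} with the support-control result of \cref{IPDEsupp} and the KR-duality formula \cref{dualKR}. First I would fix $T,S$, and $w$ satisfying \cref{AssSTRo} and let $u=\Cr{OpU1}(u_0,w)$ be the $u$-component of the corresponding solution; by \cref{IPDEsupp} its support stays inside $\Br$ for all $t\in[0,T]$, so $u(t,\cdot)-u_0$ is a (signed) finite measure supported in $\oB$, and since both have the same total mass $m$ (by \cref{LOnepres}), it lies in $\MKB$. This reduces \cref{estuconti} to estimating, uniformly in $t$, the quantity $\sup\{\int_{\oB}\varphi\,(u(t,\cdot)-u_0)\,dx:\varphi\in\WInfB,\ \|\nabla\varphi\|_{\LInfB}\le1\}$.

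The key step is to pass from a test function $\varphi\in\WInfB$ on the ball to an admissible test function on $\R^d$ in \cref{LemContiL1}. Given $\varphi\in\WInfB$ with $\|\nabla\varphi\|_{\LInfB}\le1$, I would first normalise by subtracting a constant so that $\varphi$ vanishes at (or near) the origin — this does not change the pairing against $u(t,\cdot)-u_0$ since that measure has zero total mass — then extend $\varphi$ to a globally Lipschitz function on $\R^d$ with $\|\nabla\varphi\|_{\LInfn}\le1$ and $\varphi(0)=0$ (e.g. the standard McShane/Kirszbraun extension, keeping the Lipschitz constant). Because both $u(t,\cdot)$ and $u_0$ are supported in $\overline{B_{\rho}}$ (using $\supp(u_0)\subset\overline{B_{\rho/4}}$ and \cref{compsupp1}), I may multiply $\varphi$ by the cut-off $\eta_{d,R}$ from \cref{etaR} with $R$ chosen larger than $\rho$ (say $R=2\rho$ or $R=1$), so that $\eta_{d,R}\varphi\equiv\varphi$ on the common support and the pairing is unchanged:
\begin{align*}
 \int_{\oB}\varphi\,(u(t,\cdot)-u_0)\,dx=\iRn(u(t,\cdot)-u_0)\,\eta_{d,R}\varphi\,dx.
\end{align*}

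Now \cref{LemContiL1} applies directly with $\tau=0$ and this choice of $\varphi$ and $R$, giving
\begin{align*}
 \left|\iRn(u(t,\cdot)-u_0)\,\eta_{d,R}\varphi\,dx\right|\le t^{\frac12}R^{\frac d2}\Cr{C37}\Cr{D3}\left(T,\chi,\V(w,u),u_0\right).
\end{align*}
It then remains to bound $\Cr{D3}(T,\chi,\V(w,u),u_0)$ by a quantity depending only on $S$ (and the fixed parameters $m,m_\infty,\rho$, etc., which are suppressed per the \cref{Notation} convention). This is routine: $\Cr{D3}$ is an explicit expression in $\Cr{D1},\Cr{D2}$ and norms of $\chi$ and $\V(w,u)$; using $\|u\|_{L^{\infty,1}(E_T)}=m$ (by \cref{LOnepres}), $\|u\|_{L^\infty(E_T)}\le2m_\infty$ (since $u\in\Cr{SU1}(T)$), $\|w\|_{L^\infty(0,T;\WInfn)}\le S$, the estimates \cref{estVTinf,estVTninf} for $\V$, and $T<\Cr{Tsuppu}(S)\le1$, one obtains a bound of the form $\Cr{D3}(\dots)\le \Cl{D3bound}(S)$ for some increasing function of $S$. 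Setting $\Cr{D7}(S):=R^{d/2}\Cr{C37}\Cr{D3bound}(S)$ and taking the supremum over admissible $\varphi$ via \cref{dualKR} yields \cref{estuconti}.

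The main obstacle — though a mild one — is the extension/cut-off manoeuvre: one must make sure the constant shift is chosen so $\varphi(0)=0$ (needed for \cref{LemContiL1}) while the Lipschitz bound $\|\nabla\varphi\|_{\LInfn}\le1$ is preserved through both the extension to $\R^d$ \emph{and} the multiplication by $\eta_{d,R}$ — strictly speaking $\|\nabla(\eta_{d,R}\varphi)\|_{\LInfn}$ exceeds $1$, but since $\eta_{d,R}\varphi$ coincides with $\varphi$ on $\overline{B_\rho}\supset\supp(u(t,\cdot))\cup\supp(u_0)$, it is only the values on this set that enter the pairing, and the proof of \cref{LemContiL1} (which controls $\|\nabla(\eta_{d,R}\varphi)\|_{\LTwon}$ rather than $\|\nabla(\eta_{d,R}\varphi)\|_{\LInfn}$) already absorbs the cut-off's contribution into the factor $R^{d/2}$. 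So the argument goes through once one is careful to invoke \cref{LemContiL1} with the already-cut-off function in the role of its "$\varphi$", or equivalently to re-run its short computation directly. Everything else is bookkeeping of constants.
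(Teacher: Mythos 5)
Your proposal is correct and follows essentially the same route as the paper: apply \cref{LemContiL1} with $\tau=0$, $R=2\rho$ and $V=\V(u,w)$, use the support control from \cref{IPDEsupp} together with the duality formula \cref{dualKR} (noting $\eta_{d,2\rho}\equiv1$ on $B_\rho$), and then bound $\Cr{D3}(T,\chi,\V(u,w),u_0)$ by a function of $S$ exactly as in the proof of \cref{Lem:uEqn_FP}. Your extra care about normalising and extending the ball test functions to admissible test functions on $\R^d$ is a detail the paper's (terser) proof leaves implicit, but it is the same argument.
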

 \begin{proof}
 Let 
 $T$, $S$, and $w$  satisfy \cref{AssSTRo,l_T4}.  
We apply estimate \cref{estL1uu0} from \cref{LemContiL1} taking  \begin{align*}
&V:=\V(u,w),\\ 
&\tau:=0,\\
&R:=2\rho.                                                                                                                    \end{align*}
 By \cref{etaR} we have $\eta_{d,2\rho}\equiv 1$ in $B_{\rho}$. Combining this observation with \cref{dualKR,estL1uu0}, we  conclude that 
 \begin{align}
  \|u(t,\cdot)-u_0\|_{\MKB}
 \leq&t^{\frac{1}{2}}{(2\rho)}^{\frac{d}{2}}\Cl{C50}\Cr{D3}\left(T,\chi,\V(u,w),u_0\right)\qquad\text{for all }t\in(0,T].\label{estD7}
 \end{align}
Estimating in the same fashion as, e.g. in the proof of \cref{Lem:uEqn_FP}, one readily finds that
\begin{align}
{(2\rho)}^{\frac{d}{2}}\Cr{C50}\Cr{D3}\left(T,\chi,\V(u,w),u_0\right)\leq\Cr{D7}(S)\label{DefD7}
\end{align}
for a suitable function $\Cr{D7}$.
Combining \cref{DefD7,estD7}, 
we arrive at \cref{estuconti}.
\end{proof}

\section{Local well-posedness of \texorpdfstring{\cref{PLnewIBVP}-\cref{LambdaEq}}{} and \texorpdfstring{\cref{M_PLnewIBVP}-\cref{M_LambdaEq}}{} (proofs of  \texorpdfstring{\cref{thmmain,M_thmmain}}{})}\label{SecMainProof}
\begin{Notation}
 To simplify the notation, we do not explicitly mention the dependence on the parameters listed in \cref{thmmain} for constants, mappings, and sets {that} we introduce in the proof below.
\end{Notation}
\begin{Notation}
 We assume that any operator ${\mathcal P}$  acting on a space $V$ of time-independent functions extends to time-dependent functions $v:[0,T]\to V$, $T>0$ in the natural way, i.e.
 \begin{align}
  {\mathcal P}v(t,\cdot):={\mathcal P}(v(t,\cdot)).\nonumber
 \end{align}
\end{Notation}

Finally we are ready to prove  \cref{thmmain} on local well-posedness of system \cref{PLnewIBVP}-\cref{LambdaEq}. The proof is based on decoupling the two equations, solving them separately by means of the tools established in \cref{Sec:wEqn,ExPLnew}, and then applying Banach's fixed point theorem.

 \begin{proof}[Proof of \cref{thmmain}]
  \begin{Step} To begin with, we extend the result of \cref{Thm:wEqn-FP} in order to solve \cref{LambdaEq} in the time-dependent setting.
   For $T>0$ we define
 \begin{alignat*}{5}
  \Cl[U]{SU3}(T):=&\Big\{u\in\Cr{SU1}(T): &&\quad\supp(u(t,\cdot))\subset \oB&&\quad\text{for all }t\in[0,T], \\
  &&&\quad u-\mu_0\in C([0,T];\MKB),&&\\
   &&&\quad \|u(t,\cdot)-\mu_0\|_{\MKB}\leq \Cr{R2}&&\quad\text{for all }t\in[0,T]\Big\},
 \end{alignat*}
and
\begin{align*}
  \Cl[W]{SW3}(T):=&\Big\{w\in C([0,T];W^{1,\infty}(\Br;[0,1])):\quad \|w(t,\cdot)-w_0\|_{\WInfB}\leq \Cr{R1}\quad\text{for all }t\in[0,T]\Big\},
\end{align*}
where $\Cr{R2}$ and $\Cr{R1}$ are from \cref{Thm:wEqn-FP} and $\Cr{SU1}(T)$ is from \cref{Lem:uEqn_FP}. Then,
\begin{alignat}{5}
&u\in\Cr{SU3}(T)&&\qquad\Rightarrow\qquad u(t,\cdot)\in \Cr{SU2}&&\qquad\text{for all }t\in[0,T],\label{solm1}
\\
&w\in\Cr{SW3}(T)&&\qquad\Rightarrow\qquad w(t,\cdot)\in \Cr{W3}&&\qquad\text{for all }t\in[0,T].\nonumber
\end{alignat}
We are given a pair $(\mu_0,w_0)$ that meets the conditions of 
\cref{Thm:wEqn-FP}.
 With \cref{solm1} and  \cref{Thm:wEqn-FP} it follows that
\begin{align*}
 u\in\Cr{SU3}(T)\qquad\Rightarrow\qquad \Cr{OpW1}(u(t,\cdot))\in \Cr{W3}\qquad\text{for all }t\in[0,T]
\end{align*}
and 
\begin{align*}
 \|\Cr{OpW1}(u(t_1,\cdot))-\Cr{OpW1}(u(t_2,\cdot))\|_{\WInfB}\leq \Cr{F-contin-mu}\|u(t_1,\cdot)-u(t_2,\cdot)\|_{L^1(\Br)}\qquad\text{for all }t_1,t_2\in[0,T],
\end{align*}
yielding
\begin{align}
 \Cr{OpW1}:\Cr{SU3}(T)\to \Cr{SW3}(T).\label{oper2}
\end{align}

 \end{Step}

\begin{Step} 
We turn to equation \cref{PLnewIBVP} that we want to solve for $u_0\in\Cr{U01}$ and $w\in \Cr{SW3}(T)$ and ensure that the $u$-component of the  solution belongs to $\Cr{SU3}(T)$. In order to use the theory prepared in  \cref{SubSecuEqWP} for this equation, we first extend $w$ from $\oB$ to the whole of $\R^d$.  
For this purpose, we   introduce an extension operator $\E$ such that
\begin{align*}
 &\E\in L(\WInfB;\WInfn),\\
 &\E w=w\qquad\text{in }\oB,\\
 &\E (W^{1,\infty}(\Br;[0,1]))\subset W^{1,\infty}(\R^d;[0,1]),\\
 &\|\E\|_{L(\WInfB;\WInfn)}< 2.
\end{align*}
Consequently,  
 \begin{subequations}
 \begin{align}
  &\E \in L(C([0,T];\WInfB);C([0,T];\WInfn)),\label{ContiEmbET}\\
  &\E w=w\qquad\text{in }[0,T]\times\oB,\\
  &\E(C([0,T];W^{1,\infty}(\Br;[0,1])))\subset C([0,T];W^{1,\infty}(\R^d;[0,1])),\label{Eemb_2}\\
  &\|\E\|_{L(C([0,T];\WInfB);C([0,T];\WInfn))}=\|\E \|_{L(\WInfB;\WInfn)}< 2.\label{EnormT_}
 \end{align}
 \end{subequations}
Define 
\begin{subequations}\label{mainconst}
 \begin{align}
  \Cl{DS1}:=&2(\|w_0\|_{\WInfB}+\Cr{R1}),\label{constrad}\\
  \Cl[T]{MTT1}:=&\min\left\{\Cr{Tsuppu}(\Cr{DS1}),
  \left(\frac{\Cr{R2}}{2\Cr{D7}(\Cr{DS1})}\right)^{2}\right\},
 \end{align}
 \end{subequations}
 where $\Cr{Tsuppu}$ and $\Cr{D7}$ originate from \cref{IPDEconti0,IPDEsupp}, respectively. 
Due to \cref{rho0}, each $u_0\in \Cr{U01}\subset \Cr{U0}$, where $\Cr{U0}$ is as defined in \cref{Lem:uEqn_FP},   satisfies  \cref{Assumpu01Rd1}. Hence,   \cref{Lem:uEqn_FP,IPDEsupp,IPDEconti0} apply and together with  \cref{mainconst} yield 
\begin{align}
 \Cr{OpU1}(\Cr{U01}\times\Cr{W1}(T,\Cr{DS1}))\subset& \Cr{SU3}(T)\qquad \text{ for }T\in (0,\Cr{MTT1}),\label{oper1_1}
\end{align}
where $\Cr{OpU1}$ and $\Cr{W1}$ are as defined in \cref{Lem:uEqn_FP}. Furthermore, due to \cref{constrad,ContiEmbET},
\begin{align}
 \E (\Cr{SW3}(T))\subset\Cr{W1}(T,\Cr{DS1})\qquad \text{ for }T>0.\label{oper1_2}
\end{align}
From \cref{oper1_1,oper1_2} we deduce
\begin{align}
 \Cr{OpU1}(\Cr{U01}\times\E (\Cr{SW3}(T)))\subset\Cr{SU3}(T)\qquad \text{ for }T\in (0,\Cr{MTT1}).\label{oper1}
\end{align}
\end{Step}
\begin{Step}
In view of  \cref{oper1,oper2}, for any  $T\in (0,\Cr{MTT1})$ the map
\begin{align}
\Cl[SS]{SM1}: \Cr{U01}\times \Cr{SU3}(T)\to \Cr{SU3}(T),\qquad \Cr{SM1}(u_0,u):=\Cr{OpU1}(u_0,\E \Cr{OpW1}(u))\label{DefS1}
\end{align}
is well-defined. 
Let $u_0,\wu_0\in \Cr{U01}$, and $u,\wu\in \Cr{SU3}(T)$. 
Using \cref{U-stability,W-stability,NormComp}, we estimate 
\begin{align}
 &\left\|\Cr{SM1}(u_0,u)-\Cr{SM1}(\wu_0,\wu)\right\|_{L^{\infty,1}(E_T)}-\Cr{DULip0}(T,\Cr{DS1})\left\|u_0-\wu_0\right\|_{\LOnen}\nonumber\\
 \leq
 &\Cr{DULip}(T,\Cr{DS1})\left\|\Cr{OpW1}(u)-\Cr{OpW1}(\wu)\right\|_{C([0,T];\WInfB)}\nonumber\\
 \leq&\Cr{F-contin-mu}\Cr{DULip}(T,\Cr{DS1})\left\|u-\wu\right\|_{C([0,T];\MKB)}\nonumber\\
 \leq&\rho\Cr{F-contin-mu}\Cr{DULip}(T,\Cr{DS1})\left\|u-\wu\right\|_{L^{\infty,1}(E_T)}.\nonumber%
\end{align}
Recalling \cref{l_DULip}, we see that
\begin{align*}
 \rho\Cr{F-contin-mu}\Cr{DULip}(T,\Cr{DS1})\underset{T\to0}{\to}0.
\end{align*}
Consequently, there exists some 
\begin{align*}
 \Cr{Tstar}\in (0,\Cr{MTT1}),
\end{align*}
such that for any $T\in(0,\Cr{Tstar}]$, $u_0,\wu_0\in \Cr{U01}$, and $u,\wu\in \Cr{SU3}(T)$ it holds that
\begin{align}
 &\left\|\Cr{SM1}(u_0,u)-\Cr{SM1}(\wu_0,\wu)\right\|_{L^{\infty,1}(E_{T})}\leq \frac{1}{2}\left\|u-\wu\right\|_{L^{\infty,1}(E_{T})}+\frac{1}{2}\Cl{SMCini}\left\|u_0-\wu_0\right\|_{\LOnen}.\label{SMLip2}
 \end{align}
 Due to \cref{SMLip2}, $\Cr{SM1}(u_0,\cdot)$ is a self-map and contraction in this space for any $u_0\in \Cr{U01}$. Equipped with the metric generated by the $L^{\infty,1}(E_T)$-norm, $\Cr{SU3}(T)$ is clearly a complete metric space. Hence, Banach's fixed point  provides the existence of a unique fixed point 
\begin{align*}
u\in \Cr{SU3}(T).
\end{align*}
Define the solution map
\begin{align}
 \Cl[SS]{SM}:\Cr{U01}\to \Cr{SU3}(T),\qquad \Cr{SM}(u_0):=u.\nonumber
\end{align}
Plugging $u:=\Cr{SM}(u_0)$ and $\wu:=\Cr{SM}(\wu_0)$ into \cref{SMLip2}, we arrive at 
\begin{align}
 \left\|\Cr{SM}(u_0)-\Cr{SM}(\wu_0)\right\|_{L^{\infty,1}(E_{T})}\leq\Cr{SMCini}\left\|u_0-\wu_0\right\|_{\LOnen}.\label{SMLip4}
\end{align}
Together with \cref{W-stability,NormComp}, estimate \cref{SMLip4} leads to 
\begin{align}
 \left\|\Cr{SM}(u_0)-\Cr{SM}(\wu_0)\right\|_{L^{\infty,1}(E_{T})}+\left\|\Cr{OpW1}(\Cr{SM}(u_0))-\Cr{OpW1}(\Cr{SM}(\wu_0))\right\|_{C([0,T];\WInfB)}\leq\Cr{CLipS}\left\|u_0-\wu_0\right\|_{\LOnen}.\label{SMLip3}
\end{align}
Define 
\begin{align*}
 \Cl[SS]{Suw}:\Cr{U01}\to \Cr{SU3}(T)\times \E (\Cr{SW3}(T)),\qquad \Cr{Suw}(u_0):=(\Cr{SM}(u_0),\E\Cr{OpW1}(\Cr{SM}(u_0))).
\end{align*}
Combining our findings so far with \cref{Thm:wEqn-FP,IPDEsupp}, we conclude that for any $T\in(0,\Cr{Tstar}]$ the constructed map $\Cr{Suw}$ is well-defined and produces solutions for \cref{PLnewIBVP}-\cref{LambdaEq} in $[0,T]\times\R^d$ in terms of \cref{DefSol} that satisfy  conditions \cref{Condu,Condw}. Furthermore, by \cref{SMLip3} these solutions also satisfy \cref{mainLip}.

\end{Step}
\begin{Step}
It remains to verify local  uniqueness. For some $u_0\in\Cr{U01}$ and  $T\in(0,\Cr{Tstar}]$, let  $(\wu,\ww)$ be a solution to \cref{PLnewIBVP}-\cref{LambdaEq} in $[0,T]\times\R^d$ that satisfies \cref{AssUniq1} and, hence,
\begin{align}
 \ww\in \Cr{W1}(T,\Cr{DS1}).\nonumber
\end{align}
Then, due to \cref{oper1_1} and the uniqueness part of \cref{Lem:uEqn_FP}, we have 
\begin{align}
 \wu\in \Cr{SU3}(T),\label{wuU}
\end{align}
so that, in particular, \begin{align}
\supp(\wu(t,\cdot))\subset \oB\quad\text{for all }t\in[0,T].  \label{suppwu}                                                                                         \end{align}
Now consider
\begin{align*}
 \widetilde{w}:=\E \left(\ww|_{[0,T]\times\oB}\right).
\end{align*}
In view of \cref{suppwu}, 
\begin{align}
 \widetilde{w}\wu(t,\cdot)=\ww\wu(t,\cdot)\qquad\text{in }\LOnen\text{ for all }t\in[0,T],\nonumber
\end{align}
so that 
\begin{align*}
(\wu,\widetilde{w})\in \Cr{SU3}(T)\times \E (\Cr{SW3}(T))
\end{align*}
is a solution to \cref{PLnewIBVP}-\cref{LambdaEq} in $[0,T]\times\R^d$. Moreover, we can make use of  the uniqueness parts of \cref{Thm:wEqn-FP,Lem:uEqn_FP} to conclude that 
\begin{alignat}{3}
   &\widetilde{w}|_{[0,T]\times\oB}=\Cr{OpW1}\left(\wu\right)&&\qquad\text{in }[0,T]\times\oB,\label{wu}\\
   &\wu=\Cr{OpU1}\left(u_0,\E \left(\widetilde{w}|_{[0,T]\times\oB}\right)\right)&&\qquad\text{a.e. in }[0,T]\times\R^d,\label{uw}
\end{alignat}
respectively. Due to \cref{wu,uw,DefS1,wuU},
\begin{align*}
 \wu=\Cr{SM1}(u_0,\wu)\qquad\text{a.e. in }[0,T]\times\R^d,
\end{align*}
i.e. $\wu$ is the fixed point of the contraction $\Cr{SM1}(u_0,\cdot)$ in $\Cr{SU3}(T)$, hence must coincide with $u$. Finally, yet another application of the uniqueness part of  \cref{Thm:wEqn-FP} yields that $w=\ww$ in $[0,T]\times\oB$, as required.
\end{Step}

\end{proof}
The proof of \cref{M_thmmain} is straight-forward.
\begin{proof}[Proof of \cref{M_thmmain}]
 This is a direct consequence of \cref{CompThmLambdaEq} and the fact that every Lebesgue-measurable function a.e. coincides with a Borel one.
\end{proof}
Finally, let us consider an application of \cref{thmmain}.
\begin{Example}[Existence for $\mu_0$ a point mass]\label{ExSolvPointM}
 For the pair $(\mu_0,w_0)$ from \cref{Exdelta} we already know that it satisfies \cref{Assmu0w0} for any $m>0$ and $\rho\in(0,1)$. Choose some $\rho_0>0$ and let
 \begin{align}
  &u_0\in\LInfn,\qquad \|u_0\|_{\LOnen}=m,\qquad\supp(u_0)\subset \oBo.\nonumber
 \end{align}
Set
\begin{align*}
 u_{0\lambda}(x):=\lambda^du_0(\lambda x)\qquad\text{for }x\in\R^d,\ \lambda>0.
\end{align*}
Then, for $\lambda>0$ it holds that
\begin{subequations}\label{Propu0La}
\begin{align}
  &u_{0\lambda}\in\LOnen,\qquad \|u_{0\lambda}\|_{\LOnen}=m,\qquad \supp(u_{0\lambda})\subset \overline{B_{\lambda^{-1}\rho_0}}
 \end{align}
 and 
 \begin{align}
  \iRn u_{0\lambda}\varphi\,dx-\left<\delta_0,\varphi\right>=&\iRn u(x)(\varphi(\lambda^{-1}x)-\varphi(0))\,dx\nonumber\\
  \leq &m\lambda^{-1}\rho_0\qquad\text{for all }\varphi\in\WInfn\text{ such that }\|\nabla\varphi\|_{\LInfn}\leq1.
 \end{align}
 \end{subequations}
 Now assume that $m$ and $\rho$ satisfy \cref{rho0}. 
In consequence of \cref{Propu0La}, $u_{0\lambda}\in \Cr{U01}$ for $m_{\infty}=\lambda^d\|u_0\|_{\LInfn}$ provided that
\begin{align}
 &\lambda\geq\rho_0\max\left\{\frac{4}{\rho},\frac{2m}{\Cr{R2}}\right\}.\nonumber
\end{align}
Thus, by \cref{thmmain} any essentially bounded compactly supported and suitably rescaled function can serve as an initial value giving rise to a locally unique solution to system \cref{PLnewIBVP}-\cref{LambdaEq}.

\bigskip
The choice of $u_0$ close to $\mu_0=m\delta_0$ has practical relevance. It can, for instance, describe an initial invasion stage of a tumour.
\end{Example}

\section*{Acknowledgement}
\addcontentsline{toc}{section}{Acknowledgement}
\begin{itemize}
\item The authors express their gratitude to the anonymous reviewers for their helpful comments that
contributed to the improvement of the paper.
\item 
The authors were supported by the Engineering and Physical Sciences Research Council [grant number
EP/T03131X/1].
\item For the purpose of open access, the authors have applied a Creative Commons Attribution (CC BY) licence to any Author Accepted Manuscript version arising.         
\item No new data were generated or analysed during this study.
\end{itemize}

\phantomsection
\addcontentsline{toc}{section}{References}
\printbibliography
\end{document}